\tikzset{
  reduce height/.style={
    minimum height=0pt,
    inner ysep=#1,
    text depth=2pt
  },
  reduce height/.default={0pt}
}
\pgfplotsset{compat=1.15}
\renewcommand{\arraystretch}{1.2}
\newcommand\equaldef{\stackrel{\text{def}}{=}}
\newtheorem{definition}{Definition}
\newtheorem{lemma}{Lemma}
\newtheorem{theorem}{Theorem}
\newtheorem{assumption}{Assumption}
\newtheorem{remark}{Remark}
\definecolor{constraint_relaxation_color}{HTML}{cab2d6}
\definecolor{inequality_method_color}{HTML}{92b9e1}
\definecolor{mechanism_color}{HTML}{FFCCAC}
\definecolor{strategy_color}{HTML}{FFEB94}
\definecolor{subproblem_color}{HTML}{C1E1DC}
\newcommand{\highlight}[2][yellow]{\mathchoice%
  {\colorbox{#1}{$\displaystyle#2$}}%
  {\colorbox{#1}{$\textstyle#2$}}%
  {\colorbox{#1}{$\scriptstyle#2$}}%
  {\colorbox{#1}{$\scriptscriptstyle#2$}}}%
\newcommand{\R}{\mathbb{R}}
\newcommand{\N}{\mathbb{N}}
\newcommand{\mini}{\mathop{\mbox{min}}}
\newcommand{\st}{\mbox{s.t.}}
\newcommand{\dps}{\displaystyle}
\newcommand{\constraintmultipliers}{\lambda}
\newcommand{\boundmultipliers}{\mu}
\newcommand{\Lag}{{\cal L}}
\newcommand{\trial}[1]{\hat{#1}}
\newcommand{\TRradius}{\Delta_{\text{TR}}}
\newcommand{\papertitle}{A Unified Funnel Restoration SQP Algorithm}
\title{\line(1,0){470}\\\textbf{\papertitle}\footnote{This work has been carried out within the framework of the Flanders Make SBO project DIRAC: Deterministic and Inexpensive Realizations of Advanced Control. This work was also supported by the Applied Mathematics activity within the U.S. Department of Energy, Office of Science, Advanced Scientific Computing Research, under contract number DE-AC02-06CH11357.}\\\line(1,0){470}}
\author[1]{David Kiessling}
\author[2]{Sven Leyffer}
\author[2,3]{Charlie Vanaret}
\affil[1]{Department of Mechanical Engineering, KU Leuven and Flanders Make @ KU Leuven, Leuven, Belgium}
\affil[2]{Mathematics and Computer Science Division, Argonne National Laboratory, Lemont, IL, USA}
\affil[3]{Mathematical Algorithmic Intelligence Division, Zuse-Institut Berlin, Germany}
\date{}
\begin{document}

\maketitle

\begin{abstract}
We consider nonlinearly constrained optimization problems and discuss a generic double-loop framework consisting of four algorithmic ingredients that unifies a broad range of nonlinear optimization solvers. This framework has been implemented in the open-source solver \texttt{Uno}, a Swiss Army knife-like C++ optimization framework that unifies many nonlinearly constrained nonconvex optimization solvers.
We illustrate the framework with a sequential quadratic programming (SQP) algorithm that maintains an acceptable upper bound on the constraint violation, called a funnel, that is monotonically decreased to control the feasibility of the iterates. Infeasible quadratic subproblems are handled by a feasibility restoration strategy. Globalization is controlled by a line search or a trust-region method.
We prove global convergence of the trust-region funnel SQP method, building on known results from filter methods. We implement the algorithm in \texttt{Uno}, and we provide extensive test results for the trust-region line-search funnel SQP on small \texttt{CUTEst} instances. 
\end{abstract}

\section{Introduction and Background}


We focus on algorithms for solving  nonlinearly constrained optimization problems (NCOs) of the form
\begin{equation}
\label{eq:NCO}
\tag{NCO}
\begin{array}{ll} \dps
\mini_{x\in\mathbb{R}^n}  f(x),\quad\st\quad c(x) = 0,\quad x \ge 0,
\end{array}
\end{equation}
where $f \colon \R^n \to \R$ and $c \colon \R^n \to \R^m$ are twice continuously differentiable and possibly nonconvex. Problems with general inequalities $l \le c(x) \le u$ with $l, u \in \R^m$ can be formulated as \ref{eq:NCO} via the introduction of slack variables.

NCOs arise in many important applications, such as optimal control problems \cite{betts2010practical}, partial differential equation constrained optimization problems \cite{antil2018frontiers,herzog2010algorithms} that model topology optimization \cite{bendsoe2013topology}, inverse problems \cite{biegler2007real}, or control problems \cite{troltzsch2010optimal}. In addition, NCOs arise as subproblems in more complex optimization problems such as mixed-integer nonlinear optimization \cite{lee2011mixed} and optimization problems with complementarity constraints \cite{outrata1998nonsmooth}.

Solution methods for NCOs are iterative and generate a sequence of iterates $x^{(k)}$ for $k \ge 0$ that (hopefully) converges to a stationary point of NCO or a stationary point of the constraint violation $\| c(x)\|$ under mild assumptions when started near a stationary point. In general, however, we must safeguard our methods to ensure convergence from remote starting points (which we refer to as ``global convergence" in the remainder). Iterative methods for NCOs share common algorithmic features, such as how new iterates are computed and how global convergence is ensured.

Here we present a generic framework for solving NCOs as a double-loop algorithm. The outer loop computes new iterates that converge to a stationary point, while the inner loop implements the convergence safeguards. This perspective allows us to easily interpret many existing NCO methods within this framework and {\em forms the basis of an open-source C++ implementation that unifies many existing solvers for NCOs, called \texttt{Uno}}~\cite{vanaret2024,VanaretLeyffer2024}.
\texttt{Uno} is a reliable and modular solver for NCO that is meant to be extended to 
other areas such as optimization with equilibrium constraints (see, e.g., \cite{outrata1998nonsmooth,fletcher2004solving,fletcher2006local}) or robust optimization \cite{leyffer2020survey}.

In this paper we concentrate on \texttt{funnel methods} to promote global convergence. We relate the funnel idea to filter methods \cite{fletcher1997,fletcher2002a} and illustrate how it fits into the double-loop framework. This allows us to quickly develop a funnel implementation by modifying the existing filter implementation within \texttt{Uno}.
The  funnel method has been studied in \cite{gould2010,gould2011} for solving equality-constrained NCOs with a Byrd--Omojokun trust-region method.
An iteration consists of the solution of several subproblems that yield tangential and normal step decomposition. Iterations are divided into three different kinds, namely, $f$-type, $v$-type, and $y$-type iterations, which account for an improvement in optimality, a reduction of infeasibility, or an update of the Lagrange multipliers. In his Ph.D. thesis \cite{Samadi2018}, Samedi used a funnel in the context of equality-constrained optimization to design an algorithm with favorable worst-case complexity bounds. The first funnel method for solving inequality-constrained NCOs was introduced in \cite{curtis2017}: the problem was transformed into a barrier problem and solved with a Lagrange--Newton method. The approach uses matrix-free and inexact methods and performs well on large-scale NCOs. Funnel methods can be traced back to the Ph.D. thesis of Zoppke-Donaldson \cite{Zoppke1995} that implements an SQP method that employs a tolerance tube and reports encouraging numerical results but without any convergence analysis. 

We look at funnel methods through the lens of filter methods, from which we derive our global convergence proof.
We wish to demonstrate that the funnel method obtains  performance similar to that of the filter method, while being simpler to implement.
Filter methods were introduced in \cite{fletcher1997} in the context of a trust-region SQP algorithm that switches to a feasibility restoration phase when the quadratic subproblem is infeasible. The method was implemented in the solver \texttt{filterSQP}, which obtained excellent performance on the \texttt{CUTEst} \cite{gould2014} test set. A filter line-search interior-point method was implemented within the solver \texttt{IPOPT}, one of the most successful open-source NCO solvers \cite{waechter2006}. Respective global convergence proofs were given in \cite{fletcher2002a}, \cite{fletcher2002b}, and \cite{waechter2005}. The latter two papers serve as a baseline for proving global convergence results.
 
\paragraph{Contributions.}
We make a number of important contributions in this paper: 
(i) we investigate the funnel method from the perspective of filter methods; 
(ii) we consider equality and inequality-constrained problems (if necessary by introducing additional slack variables); 
(iii) we prove global convergence of a trust-region restoration funnel method; and
(iv) we implement a trust-region restoration funnel method and a line-search restoration funnel method in the \texttt{Uno} solver, and we provide extensive numerical results on a subset of the \texttt{CUTEst} test set.
To our knowledge, this is the first time that line-search funnel methods have been considered.

\paragraph{Outline.} This paper is structured as follows. In the remainder of this section we introduce our notation and necessary optimality conditions.
Next, we present the double-loop framework and the main algorithmic components that make up a generic NCO method, using the funnel method as an exemplar.
We then review the funnel method in more detail and prove global convergence. We present numerical results on a subset of the \texttt{CUTEst} test set (with detailed tables of function and derivative evaluations for the trust-region and line-search methods in the appendices). In the final section we summarize our conclusions.  

\subsection*{Notation and Necessary Optimality Conditions}
\label{sec:problem_statement}

The $j$th component of a vector $x\in\R^n$ is denoted by a subscript,  $x_j$. The $k$th iteration index is given by a superscript,  $x^{(k)}$. For brevity, quantities evaluated at a given iterate $x^{(k)}$ are denoted by a superscript as well, for example, $f^{(k)} \equaldef f(x^{(k)})$. 

We start by defining the Fritz John function (or scaled Lagrangian) of \eqref{eq:NCO} at $(x, \rho, \constraintmultipliers, \boundmultipliers)$:
\begin{equation}
\Lag(x, \rho, \constraintmultipliers, \boundmultipliers) \equaldef \rho f(x) - \constraintmultipliers^T c(x)  - \boundmultipliers^T x = \rho f(x) - \sum_{j=1}^m \constraintmultipliers_j c_j(x) - \sum_{i=1}^n \boundmultipliers_i x_i,
\end{equation}
where $\constraintmultipliers \in \R^m$ and $\boundmultipliers \in \R^n$ are the Lagrange multipliers of the equality constraints and the bound constraints, respectively, and $\rho \in \{0, 1\}$.
We use the scaled Lagrangian because it allows us to treat feasible and infeasible stationary points of \eqref{eq:NCO} in a unified way.
When $\rho = 1$, we obtain the standard Lagrangian.
The gradient of the scaled Lagrangian with respect to $x$ at a point $(x, \rho,  \constraintmultipliers, \boundmultipliers)$ is denoted by
\begin{equation}\label{eq:statKKT}
\nabla_x \Lag(x, \rho, \constraintmultipliers, \boundmultipliers) \equaldef \rho\nabla f(x) - \sum\limits_{j = 1}^m \constraintmultipliers_j \nabla c_j(x) - \boundmultipliers,
\end{equation}
where $\nabla f(x) \in \R^n$ is the gradient of $f$ and $\nabla c(x)^{T}\in\R^{m \times n}$ is the Jacobian of $c$. 

The Hessian of the scaled Lagrangian with respect to $x$ at $(x, \rho, \constraintmultipliers)$ is defined by
\begin{equation}
\label{eq:lagrangian-hessian}
W_\rho(x, \constraintmultipliers) \equaldef \nabla^2_{xx} \Lag(x, \rho, \constraintmultipliers, \boundmultipliers) = \rho\nabla^2 f(x) - \sum_{j=1}^m \constraintmultipliers_j \nabla^2 c_j(x),
\end{equation}
where $\nabla^2 f(x) \in \R^{n \times n}$ is the Hessian of $f$ and $\nabla^2 c_j(x) \in \R^{n \times n}$ is the Hessian of $c_j$.

We say that a feasible point $x$ of \eqref{eq:NCO} fulfills the Mangasarian--Fromowitz constraint qualification (MFCQ) if the gradients of the constraints $c(x)$ are linearly independent at $x$ and if there exists $d \in \mathbb{R}^n$ such that $\nabla c(x)^T d = 0$ and $d_i > 0$ if $x_i = 0$ ($d$ points into the interior of the feasible set). Necessary optimality conditions for \eqref{eq:NCO} are given in the following definition.

\begin{definition}[KKT conditions~\cite{Nocedal2006}]
If MFCQ holds at an optimal point $x^*$ of \eqref{eq:NCO}, the first-order necessary optimality conditions of \eqref{eq:NCO} at $x^*$ state that there exist multipliers $\constraintmultipliers^* \in \R^m$ and $\boundmultipliers^* \in \R^n$ such that
\begin{align*}
\nabla_x \Lag(x^*, 1, \constraintmultipliers^*, \boundmultipliers^*) = 0, \quad
c(x^*) = 0, \quad
x^* \geq 0, \quad
\boundmultipliers^* \geq 0, \quad
x^* \odot \boundmultipliers^* = 0,
\end{align*}
where $\odot$ denotes the Hadamard (componentwise) product.
These conditions are called the Karush--Kuhn--Tucker (KKT) conditions.    
\end{definition}
\section{Unified Abstraction of Nonlinearly Constrained Optimization}
\label{sec:unified_abstraction_nl_opti}

In \cite{VanaretLeyffer2024} we introduced an abstract framework to unify the workflows of iterative methods for NCO, arguing that most methods can be assembled by combining the following four generic ingredients within a double-loop framework:
\begin{enumerate}
\item A \colorbox{constraint_relaxation_color}{constraint relaxation strategy} is a systematic way to reformulate \eqref{eq:NCO} with relaxed constraints,for example, feasibility restoration or $\ell_1$ relaxation.
\item A \colorbox{subproblem_color}{subproblem method} is a local approximation of the reformulated NCO at the current primal-dual iterate, for example, inequality-constrained quadratic problems (QPs) in an SQP framework or a primal-dual interior-point Newton system.
\item A \colorbox{strategy_color}{globalization strategy} assesses whether a trial iterate makes sufficient progress toward a solution, for example, filter method or $\ell_1$ merit function.
\item A \colorbox{mechanism_color}{globalization mechanism} defines the action that an algorithm takes when a trial iterate is not acceptable, for example, line-search or trust-region method.
\end{enumerate}

The double-loop framework portrayed in Algorithm~\ref{alg:frameworkUNO} shows how the four ingredients interact with one another. This abstract framework is implemented in \texttt{Uno}; it offers robust, off-the-shelf strategies that are independent and agnostic of each other. Strategy combinations can be assembled on the fly with no programming effort from the user. In particular, \texttt{Uno} implements three presets that mimic existing solvers: a filterSQP~\cite{fletcher1997} preset (a trust-region restoration filter SQP method); an IPOPT~\cite{waechter2005,waechter2006} preset (a line-search restoration filter barrier method); and a Byrd~\cite{byrd2008steering} preset (a line-search $\ell_1$-merit S$\ell_1$QP method).

\begin{algorithm}
\SetAlgoVlined
\caption{Abstract double-loop framework for iterative methods for \eqref{eq:NCO}.}
\label{alg:frameworkUNO}
\small
\KwData{initial point $(x^{(0)}, \constraintmultipliers^{(0)}, \boundmultipliers^{(0)})$}
Set $k \gets 0$ \;
\While{termination criteria at $(x^{(k)}, \constraintmultipliers^{(k)}, \boundmultipliers^{(k)})$ not met}{
    \begin{globalization_mechanism}
    \Repeat{$(\trial{x}^{(k+1)}, \trial{\constraintmultipliers}^{(k+1)}, \trial{\boundmultipliers}^{(k+1)})$ is \colorbox{strategy_color}{acceptable}}{
        Solve \colorbox{constraint_relaxation_color}{feasible} \colorbox{subproblem_color}{subproblem}(s) that approximate(s) \eqref{eq:NCO} at $(x^{(k)}, \constraintmultipliers^{(k)}, \boundmultipliers^{(k)})$ \;
        Assemble trial iterate $(\trial{x}^{(k+1)}, \trial{\constraintmultipliers}^{(k+1)}, \trial{\boundmultipliers}^{(k+1)})$ \;
    }
    \end{globalization_mechanism}
    Update $(x^{(k+1)}, \constraintmultipliers^{(k+1)}, \boundmultipliers^{(k+1)}) \gets (\trial{x}^{(k+1)}, \trial{\constraintmultipliers}^{(k+1)}, \trial{\boundmultipliers}^{(k+1)})$ \;
    $k \gets k+1$ \;
} 
\KwResult{$(x^{(k)}, \constraintmultipliers^{(k)}, \boundmultipliers^{(k)})$}
\end{algorithm}

This paper uses the same abstraction as in \cite{VanaretLeyffer2024} but restricts the presentation to SQP methods solving {inequality-constrained QP} subproblems and {feasibility restoration} to ensure consistent QPs. The funnel method is investigated as a {globalization strategy} for different algorithmic configurations and unified both for trust-region and line-search methods. In the remainder of this section we provide details of these algorithmic ingredients.

\subsection{Subproblem Method: Sequential Quadratic Programming}
SQP is a second-order iterative method for finding a local solution for \eqref{eq:NCO}. At iteration $k$, it solves a local quadratic approximation of \eqref{eq:NCO} at the primal-dual iterate $(x^{(k)}, \lambda^{(k)}, \mu^{(k)})$:
\begin{equation}
\tag{QP$(x^{(k)})$}
\label{eq:QP-subproblem}
\begin{array}{lll} \dps
\mini_{d} & \frac{1}{2} d^T W_1^{(k)} d + (\nabla f^{(k)})^T d \\
\st 	& c^{(k)} + (\nabla c^{(k)})^T d = 0 \\
      & x^{(k)} + d \ge 0.
\end{array}
\end{equation}
The primal-dual solution of \eqref{eq:QP-subproblem} is denoted by $(d^{(k)}, \hat{\lambda}^{(k+1)}, \hat{\mu}^{(k+1)})$. The trial iterate for a given step size $\alpha \in (0, 1]$ is given by $\hat{x}^{(k+1)} = x^{(k)} + \alpha d^{(k)}$.
If the trial iterate is accepted by the globalization strategy, we move to the trial iterate and solve the next QP. 
Otherwise, the globalization mechanism (e.g., a line search) defines a new trial iterate that is more likely to be acceptable.
The process terminates either when an approximate stationary (KKT) point is found or when a stationary point of the constraint violation is found or with an indication that a constraint qualification fails.

Under appropriate assumptions and close to a solution, SQP cnverges with $\alpha=1$ and achieves superlinear or quadratic local convergence \cite{Nocedal2006}.
Far from the solution, additional safeguards need to be taken to ensure 
global convergence. SQP can suffer from ill-posedness; if the exact Hessian is indefinite, the QP can be unbounded. Moreover, the linearization of the constraints in \eqref{eq:NCO} can be inconsistent. Both cases yield iterations that are not well defined. Additionally, taking full steps ($\alpha = 1$) does not necessarily result in a converging method. A globalization mechanism needs to be incorporated, as well as a globalization strategy that ensures descent for the objective function and improvement in constraint violation. The next section discusses various methods to address these issues. 

\subsection{{Globalization Mechanism}: Trust Region or Line Search}

Standard globalization mechanisms include
trust-region methods and line-search methods. We discuss each scheme in turn and show that they can be interpreted as inner iterations. 

\subsubsection{Trust-Region Methods}

Trust-region methods limit the length of the direction $d$ a priori by imposing the trust-region constraint $\Vert d \Vert \leq \TRradius^{(l)}$, where $\TRradius^{(l)} > 0$ is the trust-region radius. Various norms are possible; in this paper we consider only the $\ell_\infty$ norm because it is most easily implemented within a QP subproblem. The step $d^{(k, l)}$ is obtained by solving the trust-region subproblem at the current primal-dual iterate $(x^{(k)}, \constraintmultipliers^{(k)}, \boundmultipliers^{(k)})$:
\begin{equation}
\label{eq:TR-QP}
\tag{$\mathit{QP}(x^{(k)},\TRradius^{(l)})$}
\begin{array}{lll} \dps
\min_{d} & \frac{1}{2} d^T W_1^{(k)} d + (\nabla f^{(k)})^T d \\
\st 	& c^{(k)} + (\nabla c^{(k)})^T d = 0 \\
		& x^{(k)} + d \ge 0 \\
		& \lVert d \rVert_\infty \le \TRradius^{(l)}.
\end{array}
\end{equation}
The radius $\TRradius^{(l)}$ is reduced until the trial iterate ${\trial{x}^{(k+1, l)} \equaldef x^{(k)} + d^{(k, l)}}$ is accepted by the globalization strategy or until \eqref{eq:TR-QP} becomes infeasible.
Usually, $\TRradius^{(l)}$ is increased in successful iterations if the trust region is active at $d^{(k, l)}$, and it is decreased to a value smaller than $\min(\TRradius^{(l)}, \lVert d^{(k,l)} \rVert_\infty)$ when the trial iterate is rejected. A positive definite Hessian $W_1^{(k)}$ is not required (as long as the QP solver can handle problems with an indefinite Hessian), because directions of negative curvature are bounded by the trust region.

\subsubsection{Line Search Methods}

Line-search methods solve \ref{eq:QP-subproblem} and search for an acceptable iterate along $d^{(k)}$ by varying the step size $\alpha^{(k,l)} \in (0, 1]$: the trial iterate is denoted by ${\trial{x}^{(k+1, l)} \equaldef x^{(k)} + \alpha^{(k,l)} d^{(k)}}$. Here we opt for a backtracking line search that seeks the largest step size in the sequence $a^{(k,l)}\in\{2^{-l}~|~l = 0, 1,\ldots\}$ such that the trial iterate is acceptable to the globalization strategy.
A positive definite approximation of the Hessian $W^{(k)}$ is required to guarantee the well-posedness of \ref{eq:QP-subproblem}.

The key difference with trust-region methods is that no additional QPs need be solved if a trial iterate is rejected. On the other hand, the step $\alpha^{(k,l)} d^{(k)}$ is usually not optimal (or even feasible) within the equivalent trust region, $\|d\|_{\infty} \le \alpha^{(k,l)} \| d^{(k)} \|_{\infty}$.

\subsection{{Constraint Relaxation Strategy}: Feasibility Restoration}
\label{sec:feasibility-restoration}

An infeasible quadratic subproblem results from inconsistent linearized or bound constraints. This situation can indicate that \eqref{eq:NCO} is infeasible.
In this case the method reverts to the \textit{feasibility restoration phase}: the original objective is temporarily discarded, and the following feasibility problem (e.g., with the $\ell_1$ norm) is solved instead:
\begin{equation}
\begin{array}{ll} \dps
\mini_x & \| c(x) \|_1 \\
\st     & x \ge 0.
\end{array}
\label{eq:feasibility_problem}
\end{equation}
Other subproblems are also possible and do not influence the proof of global convergence.
Feasibility restoration improves feasibility until a minimum of the constraint violation is obtained or the subproblem becomes feasible again, in which case solving the original problem (the \textit{optimality phase}) is resumed.
Any (local) solution $x^* \geq 0$ of \eqref{eq:feasibility_problem} with $\| c(x^*) \|_1 > 0$ is an indication that \eqref{eq:NCO} is (locally) infeasible.
Because \eqref{eq:feasibility_problem} is essentially a bound-constrained problem, we will not analyze the convergence to infeasible stationary points here and instead assume without loss of generality that a convergent globalization is available.

In our implementation we use a smooth reformulation of the $\ell_1$ feasibility problem (possibly with a trust-region constraint). The feasible quadratic subproblem is given by
\begin{equation}
\tag{$\mathit{FQP}(x^{(k)},\TRradius^{(l)})$}
\label{eq:FQP}
\begin{array}{lll} \dps
\mini_{d, u,v} & \frac{1}{2} d^T W_0^{(k)} d ~+~ e^T u + e^T v \\
\st 	& c^{(k)} + (\nabla c^{(k)})^T d ~- u + v = 0 \\
        & x^{(k)} + d \ge 0 , \quad \left( \| d \|_{\infty} \le \TRradius^{(l)}\right)\\
        & u \ge 0, ~ v \ge 0,
\end{array}
\end{equation}
where we indicate the presence/absence of the trust-region constraint for trust-region/line-search methods, respectively.
Here $e$ is a vector of ones of appropriate size.
We note that introducing the elastic variables $u, v$ makes the constraint Jacobian full rank, which guarantees linear independence constraint qualification. Moreover, we  add the trust-region bound only to the original variables, thus ensuring that \eqref{eq:FQP} is feasible for any trust-region radius. For the line-search variant, we will refer to $(FQP(x^{(k)}))$.

\subsection{Globalization Strategies}

Constrained optimization is concerned with the realization of two competing goals: minimizing the constraint violation and minimizing the objective value.
Filter methods~\cite{fletcher1997,fletcher2002a} interpret \eqref{eq:NCO} as a bi-objective optimization problem. Instead of combining the objective and the constraint violation in a merit function, they decompose the optimization problem into two separate goals: reducing the objective function and reducing the constraint violation (the latter takes precedence).
The great advantage is that they do not require a priori knowledge or the update of a penalty parameter.
Funnel methods adaptively define an acceptable threshold of constraint violation and accept steps that satisfy a sufficient decrease condition either for the constraint violation or for the objective. The rationale for funnel methods is that close to a local minimum and near the feasible set, we can expect the QP model to be a good predictor for the decrease of the objective, while far from the feasible set the QP step is more likely to reduce the constraint violation. Both the funnel and filter methods employ a natural switching condition that is capable of recognizing these two scenarios.
In the following, we first discuss the common progress measures for the funnel and filter methods, then introduce the methods in detail, and highlight their close connections and similarities.


\subsubsection{Progress Measures}

Without loss of generality, the bound constraints on $x$ are always feasible throughout SQP iterations.
To quantify progress regarding the constraint violation or the objective, we monitor the measure of infeasibility $h(x) \equaldef \Vert c(x) \Vert_1$ and the objective $f(x)$ throughout the optimization process.
Local models of $h(x)$ and $f(x)$ at an iterate $x^{(k)}$ are defined by
\begin{subequations}
\label{eq:local_models}
\begin{align}
&m_h^{(k)}(d) \equaldef \big\lVert c^{(k)} + (\nabla c^{(k)})^T d \big\rVert_1, \\
&m_f^{(k)}(d) \equaldef \frac{1}{2}d^{T} W_1^{(k)} d + (\nabla f^{(k)})^T d + f^{(k)}.
\end{align}
\end{subequations}
We define the respective \textit{predicted reductions} by
\begin{subequations}
\label{eq:pred}
\begin{align}
&\Delta m_h^{(k)}(d) \equaldef m_h^{(k)}(0) - m_h^{(k)}(d) = h^{(k)} - m_h^{(k)}(d),\\
&\Delta m_f^{(k)}(d) \equaldef m_f^{(k)}(0) - m_f^{(k)}(d) = -\frac{1}{2}d^{T} W_1^{(k)} d - (\nabla f^{(k)})^T d.
\end{align}
\end{subequations}
The \textit{actual reductions} are defined by
\begin{subequations}
\label{eq:ared}
\begin{align}
&\Delta f^{(k)}(d) \equaldef f^{(k)} - f(x^{(k)}+d),\\
&\Delta h^{(k)}(d) \equaldef h^{(k)} - h(x^{(k)}+d).
\end{align}
\end{subequations}

\subsubsection{Funnel Method}
\label{sec:funnel_mechanism}

A funnel (illustrated in Figure~\ref{fig:illustration_funnel_feasible_set}) describes a relaxation of the feasible set that allows a constraint violation up to a given upper bound $\tau^{(k)} > 0$. At iteration $k$, a necessary condition for acceptance of the trial iterate is the \textit{funnel condition}
\begin{equation}
\label{eq:funnel_condition}
h(x) \le \tau^{(k)}.
\end{equation}
The initial funnel width is given by
\begin{align}
\label{eq:funnel_initialization}
\tau^{(0)} = \max\left[\bar{\tau}, \bar{\kappa} h^{(0)} \right]
\end{align}
with $\bar{\tau} > 0$ and $\bar{\kappa} > 1$ to ensure that the initial point is acceptable. All iterates stay within the funnel whose width is monotonically non-increasing; that is, $\tau^{(k+1)} \leq \tau^{(k)}$ for all $k \geq 0$.
This property combined with a constraint relaxation strategy controls the feasibility of the iterates and ensures a feasible limit point.

\begin{figure}[htpb!]
\centering


\begin{tikzpicture}

\fill[gray!50, opacity=0.4] 
  plot[smooth,tension=1] coordinates {(-2.5,2) (-2.5,0) (-2.5,-2)}
  -- plot[smooth,tension=1] coordinates {(-2.5,2) (-1,0) (-2.5,-2)}
  -- cycle;

\fill[gray!50, opacity=0.4] 
  plot[smooth,tension=1] coordinates {(-2.5,2) (-1,0) (-2.5,-2)}
  -- plot[smooth,tension=1] coordinates {(0.5,-2) (1.5,0) (0.5, 2)}
  -- cycle;

\draw[thick] plot[smooth,tension=1] coordinates {(-2.5,-2) (-1,0) (-2.5,2)};
\draw[thick,dashed] plot[smooth,tension=1] coordinates {(0,-2) (1,0) (0,2)};
\draw[thick] plot[smooth,tension=1] coordinates {(0.5,-2) (1.5,0) (0.5, 2)};

\node at (-1.75,0) {\Large $\mathcal{F}$};
\draw[thick] (0.9,-1.2) -- (1.5,-1.4);
\node at (2.5,-1.5) {\small $h^{(k)} \leq \tau^{(k)}$};
\draw[thick] (0.3,1.2) -- (1.4,1.4);
\node at (2.5,1.5) {\small $h^{(k)} \leq \beta \tau^{(k)}$};

\end{tikzpicture}


\caption{Funnel (in gray) around the feasible set $\mathcal{F}$. The frontier of the funnel envelope \eqref{eq:funnel_sufficient_decrease} is shown as a dashed curve.}
\label{fig:illustration_funnel_feasible_set}
\end{figure}
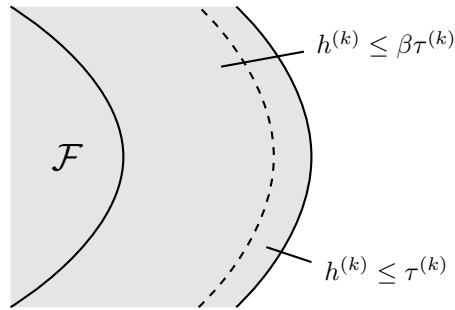

Similarly to filter methods, a switching condition (with $\delta \in (0,1)$)
\begin{align}
\label{eq:switching_condition}
\Delta m_f^{(k)}(d) \geq \delta (h^{(k)})^2
\end{align}
ensures that the algorithm does not take infinitely small steps and thus avoids convergence toward infeasible points. It distinguishes between two types of iterations:
$f$-type iterations that improve optimality and $h$-type iterations that improve feasibility:
\begin{itemize}
\item If the switching condition \eqref{eq:switching_condition} is satisfied, the trial iterate is accepted if an Armijo-type sufficient decrease condition
\begin{align}
\label{eq:sufficient_reduction}
\Delta f^{(k)} \geq \sigma \Delta m_f^{(k)}(d)
\end{align}
holds with $\sigma \in (0,1)$, that is, if the current iterate yields a sufficient decrease of the objective function. This is an $f$-type iteration.

\item If the switching condition \eqref{eq:switching_condition} is violated and if the funnel sufficient decrease condition
\begin{align}
\label{eq:funnel_sufficient_decrease}
h(\trial{x}^{(k+1, l)})\leq \beta \tau^{(k)}
\end{align}
holds with $\beta \in (0, 1)$, the trial iterate is accepted, and the funnel width is decreased:
\begin{align}
\label{eq:funnel_update}
\tau^{(k+1)} = (1 - \kappa) h(\trial{x}^{(k+1, l)}) + \kappa \tau^{(k)}
\end{align}
for $\kappa \in (0, 1)$. This is an $h$-type iteration.
\end{itemize}
Otherwise, if none of the above conditions is satisfied, the step is rejected, and either the trust-region radius or the step size is reduced. 
The complete funnel method is summarized in Algorithm~\ref{alg:acceptance} in the context of feasibility restoration. Here, $x^\mathit{resto}$ is the point at which the algorithm switches from the optimality phase to the feasibility restoration phase; it is set in Algorithms~\ref{alg:ls_funnel_restoration_sqp} and \ref{alg:direction_computation}.
In the feasibility restoration phase, we simply enforce the Armijo condition on the constraint violation.

\begin{algorithm}[h!]
\caption{Restoration funnel acceptance test}
\label{alg:acceptance}
\scriptsize
\KwIn{trial iterate $\trial{x}^{(k+1)}$, direction $d^{(k,l)}$}
\begin{constraint_relaxation}
$\mathit{acceptable} \gets \mathit{false}$ \;
\eIf{$\| d^{(k,l)} \| = 0$} {
    $\mathit{acceptable} \gets \mathit{true}$\tcp*[f]{KKT point found}\;
}{
    \If{$\mathit{phase}$ = Restoration and \colorbox{subproblem_color}{subproblem} feasible and $\highlight[strategy_color]{h(\trial{x}^{(k+1, l)}) \le \beta \min(\tau^{(k)}, h(x^\mathit{resto}))}$} {
        $\mathit{phase} \gets$ Optimality \;
        \begin{globalization_strategy}
        $\tau^{(k+1)} \gets (1 - \kappa) h(\trial{x}^{(k+1)}) + \kappa \tau^{(k)}$ \;
        \end{globalization_strategy}
    }

    \uIf{$\mathit{phase}$ = Optimality} {
        \begin{globalization_strategy}
        \If{trial iterate is acceptable to funnel: $h(\trial{x}^{(k+1)}) \leq \tau^{(k)}$}{
            \uIf(\tcp*[f]{f-type step}){switching condition is satisfied: $\Delta m_f^{(k)}(d) \ge \delta \left(h^{(k)}\right)^2$}{
                \If{Armijo condition is satisfied: $f^{(k)} - f(\trial{x}^{(k+1)}) \geq \sigma \Delta m_f^{(k)}(d)$} {
                $\mathit{acceptable} \gets \mathit{true}$ \;}
            }
            \ElseIf(\tcp*[f]{h-type step}){funnel is sufficiently reduced: $h(\trial{x}^{(k+1)}) \leq \beta \tau^{(k)}$}{
                $\mathit{acceptable} \gets \mathit{true}$ \;
                $\tau^{(k+1)} \gets (1 - \kappa) h(\trial{x}^{(k+1)}) + \kappa \tau^{(k)}$\;
            }
        }
        \end{globalization_strategy}
    }
    \ElseIf {$\mathit{phase}$ = Restoration}{
        \begin{globalization_strategy}
        \If{Armijo condition is satisfied: $h^{(k)} - h(\trial{x}^{(k+1)}) \ge \sigma \Delta m_h^{(k)}(d)$}{
            $\mathit{acceptable} \gets \mathit{true}$ \;
        }
        \end{globalization_strategy}
    }
}
\end{constraint_relaxation}
\Return $\mathit{acceptable}$ \;
\end{algorithm}

Inspired by \cite{Samadi2018}, the principle of the funnel is illustrated in Figure~\ref{fig:illustration_funnel_mechanism}.
In Figure~\ref{fig:funnel_satisfaction}, the funnel width is the solid vertical line, and the funnel envelope is represented by the dotted line. The black dot is the current iterate, the green dots show possible acceptable iterates, and the red dot lies outside of the funnel and is rejected.
Figure~\ref{fig:funnel_h_type} represents an $h$-type iteration: the switching condition is not satisfied, but the funnel sufficient decrease condition is satisfied. The trial iterate is accepted, and the funnel width is decreased according to \eqref{eq:funnel_update}. This guarantees that the feasibility of the iterates is driven to zero. The previous funnel width is shown in gray.
Figure~\ref{fig:funnel_f_type} represents an $f$-type iteration: both the switching condition and the Armijo sufficient decrease condition are satisfied. The trial iterate is accepted, but the funnel width is not updated.
Figure~\ref{fig:funnel_convergence} shows the convergence of the funnel method to the optimal solution.

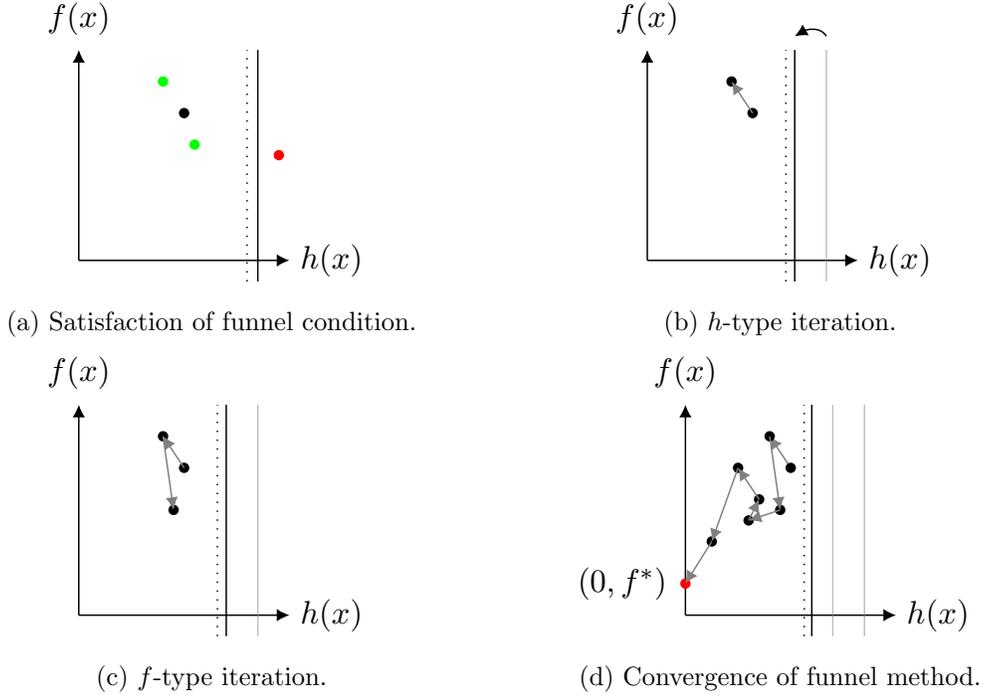
\begin{figure}[htbp!]
\centering
\def\scaling{1.4}
\begin{subfigure}{.45\linewidth}
\centering
\scalebox{\scaling}{\begin{tikzpicture}
\draw[-{Latex[length=1.2mm, width=1.2mm]}] (0.0, 0.0) -- (2.0, 0.0) node[left] {};
\draw[-{Latex[length=1.2mm, width=1.2mm]}] (0.0, 0.0) -- (0.0, 2.0) node[left] {};

\node (A) at (0.0, 2.3) {\scalebox{.8}{$f(x)$}};
\node (A) at (2.4, 0.0) {\scalebox{.8}{$h(x)$}};

\filldraw (1, 1.4) circle (1.2pt) node[left] {};
\filldraw[color=green] (0.8, 1.7) circle (1.2pt) node[left] {};
\filldraw[color=green] (1.1, 1.1) circle (1.2pt) node[left] {};
\filldraw[color=red] (1.9, 1.0) circle (1.2pt) node[left] {};

\def\funnelwidth{1.7}
\def\envelope{0.94}

\draw[solid] (\funnelwidth, -0.2) -- (\funnelwidth, 2) node[left] {};
\draw[dotted] (\envelope*\funnelwidth, -0.2) -- (\envelope*\funnelwidth, 2) node[left] {};
\end{tikzpicture}}
\caption{Satisfaction of funnel condition.}
\label{fig:funnel_satisfaction}
\end{subfigure}
\begin{subfigure}{.45\linewidth}
\centering
\scalebox{\scaling}{\begin{tikzpicture}
\draw[-{Latex[length=1.2mm, width=1.2mm]}] (0.0, 0.0) -- (2.0, 0.0) node[left] {};
\draw[-{Latex[length=1.2mm, width=1.2mm]}] (0.0, 0.0) -- (0.0, 2.0) node[left] {};

\node (A) at (0.0, 2.3) {\scalebox{.8}{$f(x)$}};
\node (A) at (2.4, 0.0) {\scalebox{.8}{$h(x)$}};

\coordinate (x0) at (1, 1.4);
\coordinate (x1) at (0.8, 1.7);

\filldraw (x0) circle (1.2pt) node[left] {};
\filldraw (x1) circle (1.2pt) node[left] {};
\draw[-{Latex[length=1.2mm, width=1.2mm]}, gray] (x0) -- (x1) node[left] {};

\def\oldfunnelwidth{1.7}
\def\funnelwidth{1.4}
\def\envelope{0.94}

\draw[solid, color=gray, opacity=0.5] (\oldfunnelwidth, -0.2) -- (\oldfunnelwidth, 2) node[left] {};
\draw[solid] (\funnelwidth, -0.2) -- (\funnelwidth, 2) node[left] {};
\draw[dotted] (\envelope*\funnelwidth, -0.2) -- (\envelope*\funnelwidth, 2) node[left] {};
\draw[-{Latex[length=1.2mm, width=1.2mm]}] (\oldfunnelwidth, 2.13) to [out=120,in=30] (\funnelwidth, 2.13) node[left] {};
\end{tikzpicture}}
\caption{$h$-type iteration.}
\label{fig:funnel_h_type}
\end{subfigure}
\begin{subfigure}{.45\linewidth}
\centering
\scalebox{\scaling}{\begin{tikzpicture}
\draw[-{Latex[length=1.2mm, width=1.2mm]}] (0.0, 0.0) -- (2.0, 0.0) node[left] {};
\draw[-{Latex[length=1.2mm, width=1.2mm]}] (0.0, 0.0) -- (0.0, 2.0) node[left] {};

\node (A) at (0.0, 2.3) {\scalebox{.8}{$f(x)$}};
\node (A) at (2.4, 0.0) {\scalebox{.8}{$h(x)$}};

\coordinate (x0) at (1, 1.4);
\coordinate (x1) at (0.8, 1.7);
\coordinate (x2) at (0.9, 1.0);

\filldraw (x0) circle (1.2pt) node[left] {};
\filldraw (x1) circle (1.2pt) node[left] {};
\filldraw (x2) circle (1.2pt) node[left] {};

\draw[-{Latex[length=1.2mm, width=1.2mm]}, gray] (x0) -- (x1) node[left] {};
\draw[-{Latex[length=1.2mm, width=1.2mm]}, gray] (x1) -- (x2) node[left] {};

\def\oldfunnelwidth{1.7}
\def\funnelwidth{1.4}
\def\envelope{0.94}

\draw[solid, color=gray, opacity=0.5] (\oldfunnelwidth, -0.2) -- (\oldfunnelwidth, 2) node[left] {};
\draw[solid] (\funnelwidth, -0.2) -- (\funnelwidth, 2) node[left] {};
\draw[dotted] (\envelope*\funnelwidth, -0.2) -- (\envelope*\funnelwidth, 2) node[left] {};
\end{tikzpicture}}
\caption{$f$-type iteration.}
\label{fig:funnel_f_type}
\end{subfigure}
\begin{subfigure}{.45\linewidth}
\centering
\scalebox{\scaling}{\begin{tikzpicture}
\draw[-{Latex[length=1.2mm, width=1.2mm]}] (0.0, 0.0) -- (2.0, 0.0) node[left] {};
\draw[-{Latex[length=1.2mm, width=1.2mm]}] (0.0, 0.0) -- (0.0, 2.0) node[left] {};

\node (A) at (0.0, 2.3) {\scalebox{.8}{$f(x)$}};
\node (A) at (2.4, 0.0) {\scalebox{.8}{$h(x)$}};

\coordinate (x0) at (1, 1.4);
\coordinate (x1) at (0.8, 1.7);
\coordinate (x2) at (0.9, 1.0);
\coordinate (x3) at (0.6, 0.9);
\coordinate (x4) at (0.7, 1.1);
\coordinate (x5) at (0.5, 1.4);
\coordinate (x6) at (0.25, 0.7);
\coordinate (xopt) at (0, 0.3);

\draw[-{Latex[length=1.2mm, width=1.2mm]}, gray] (x0) -- (x1) node[left] {};
\filldraw (x0) circle (1.2pt) node[left] {};
\filldraw (x1) circle (1.2pt) node[left] {};
\filldraw (x2) circle (1.2pt) node[left] {};
\filldraw (x3) circle (1.2pt) node[left] {};
\filldraw (x4) circle (1.2pt) node[left] {};
\filldraw (x5) circle (1.2pt) node[left] {};
\filldraw (x6) circle (1.2pt) node[left] {};
\filldraw[color=red] (xopt) circle (1.2pt) node[left] {};
\node (A) at (-0.6, 0.3) {\scalebox{.8}{$(0, f^*)$}};

\draw[-{Latex[length=1.2mm, width=1.2mm]}, gray] (x1) -- (x2) node[left] {};
\draw[-{Latex[length=1.2mm, width=1.2mm]}, gray] (x2) -- (x3) node[left] {};
\draw[-{Latex[length=1.2mm, width=1.2mm]}, gray] (x3) -- (x4) node[left] {};
\draw[-{Latex[length=1.2mm, width=1.2mm]}, gray] (x4) -- (x5) node[left] {};
\draw[-{Latex[length=1.2mm, width=1.2mm]}, gray] (x5) -- (x6) node[left] {};
\draw[-{Latex[length=1.2mm, width=1.2mm]}, gray] (x6) -- (xopt) node[left] {};

\def\veryoldfunnelwidth{1.7}
\def\oldfunnelwidth{1.4}
\def\funnelwidth{1.2}
\def\envelope{0.94}

\draw[solid, color=gray, opacity=0.5] (\veryoldfunnelwidth, -0.2) -- (\veryoldfunnelwidth, 2) node[left] {};
\draw[solid, color=gray, opacity=0.5] (\oldfunnelwidth, -0.2) -- (\oldfunnelwidth, 2) node[left] {};
\draw[solid] (\funnelwidth, -0.2) -- (\funnelwidth, 2) node[left] {};
\draw[dotted] (\envelope*\funnelwidth, -0.2) -- (\envelope*\funnelwidth, 2) node[left] {};
\end{tikzpicture}}
\caption{Convergence of funnel method.}
\label{fig:funnel_convergence}
\end{subfigure}
\caption{Illustration of the funnel method~\cite{Samadi2018}.}
\label{fig:illustration_funnel_mechanism}
\vspace{-3mm}
\end{figure}

\paragraph{A note on the funnel reduction mechanism.}
In the first paper on funnel methods \cite{gould2010}, the funnel update was given by 
\begin{align}
\label{eq:gould_funnel_update}
\tau^{(k+1)}=\max \left[\beta \tau^{(k)}, (1-\kappa)h(\trial{x}^{(k+1, l)})
 +\kappa h^{(k)}\right]. 
\end{align}
This keeps the iterates within the funnel if $h(\trial{x}^{(k+1, l)})\leq h^{(k)}$. In our case, this cannot be guaranteed since we also allow steps that increase both optimality and infeasibility. We tried out different update strategies inspired by \cite{gould2010}, but all had similar performance. Therefore, we opted for \eqref{eq:funnel_update}, which is the second term in the $\max$. More important is the choice of the parameter $\kappa$. If $\kappa$ is large such that the funnel width is slowly decreased, the funnel method allows for too much non-monotonicity, and performance can be degraded. We note that we decrease the funnel width only  in $h$-type iterations.

\subsubsection{Filter Methods}

Using the notion of \cite{waechter2005},  we can define a filter  as a taboo region in the $\{(h, f) \in \R^2 \colon~ h \geq 0 \}$ half-plane, defined by a list of points $(h(x_p), f(x_p))$ (typically, previous iterates). The filter at iteration $k$ is denoted by $\mathcal{F}^{(k)}$. A trial iterate $\trial{x}^{(k+1,l)}$ is acceptable to the filter if it sufficiently decreases feasibility or optimality (or both), 
\begin{align*}
h(\trial{x}^{(k+1,l)}) \le \beta h(x_p)
\quad \text{or} \quad
f(\trial{x}^{(k+1,l)}) \le f(x_p) - \gamma h(\trial{x}^{(k+1,l)}) \quad \text{for all} \quad (h(x_p), f(x_p)) \in \mathcal{F}^{(k)},
\end{align*}
where $\beta, \gamma>0$. The first condition is similar to \eqref{eq:funnel_sufficient_decrease}, and the second condition is a sufficient decrease of the objective. 
During the optimization process, it is ensured that every iterate does not lie within the taboo region.
Often the filter is initialized with an upper bound $h_{\max}$ on the constraint violation that mimics a filter entry $(h_{\max}, -\infty)$.
The acceptability of a trial iterate with respect to $h_{\max}$ is given by
\begin{align}
h(\trial{x}^{(k+1, l)})\leq \beta h_{\max}.
\end{align}

Filter methods also distinguish between $f$-type and $h$-type iterations by means of a switching condition. In the case of an $f$-type iteration, an Armijo condition checks for sufficient decrease, and the filter remains unchanged.
In $h$-type iterations, the trial iterate is accepted if it satisfies sufficient reduction with respect to the
current iterate:
\begin{align*}
f(\trial{x}^{(k+1)}) \leq f^{(k)} - \gamma h(\trial{x}^{(k+1)}) \quad \text{or}\quad h(\trial{x}^{(k+1)}) \leq \beta h^{(k)},
\end{align*}
in which case the current iterate is added to the filter. This prevents the algorithm from cycling.
We note that in the unconstrained case, we have to take more care to accept steps, because ${h(\trial{x}^{(k+1)}) \leq \beta h^{(k)}}$ holds trivially in that case, and we instead enforce a sufficient reduction condition on $f$ as usual for unconstrained optimization.

\subsubsection{Connections between Funnel and Filter Methods}

An interesting connection exists between filter and funnel methods. In practical implementations of filter methods, we limit the number of filter entries to a maximum number, say $N_{\cal F}$, and initialize the filter with an upper bound on the constraint violation $h_{\max} = \max\{\kappa_1 h(x^{(0)}), \kappa_2\}$, where $\kappa_1, \kappa_2 >1$ are constants that ensure that the initial point $x^{(0)}$ is acceptable. If the capacity $N_{\cal F}$ of the filter is reached, we reduce the upper bound by replacing it with an upper bound derived from the filter entry that has the maximum constraint violation $(h^+, f^+) \in {\cal F}$, resulting in a new upper bound $(h^+,-\infty)$.  This approach frees one filter entry and ensures that the current point remains filter-acceptable, and we cannot cycle.

We can now interpret the funnel as a filter with a single entry,  $N_{\cal F}=1$, that corresponds to the upper bound on the constraint violation, and we update this upper bound on h-type iterations. The main difference between this interpretation and the actual funnel method is  the update rule \eqref{eq:funnel_update}.
This interpretation allows us to provide a streamlined convergence proof.

\section{A Unified Funnel Restoration SQP Algorithm}
\label{sec:funnel_restoration_sqp}


The complete method is presented in Algorithms~\ref{alg:acceptance}, \ref{alg:tr_funnel_restoration_sqp}, \ref{alg:ls_funnel_restoration_sqp}, and \ref{alg:direction_computation}. It follows the double-loop framework of Algorithm~\ref{alg:frameworkUNO}: the outer loop updates the current iterate and the inner loop computes an acceptable point (either in the optimality phase or in the feasibility restoration phase).
Algorithm \ref{alg:acceptance} implements the funnel globalization strategy; Algorithms~\ref{alg:tr_funnel_restoration_sqp} and \ref{alg:ls_funnel_restoration_sqp} implement the double-loop SQP with a trust-region and with a line-search globalization mechanism, respectively; and, Algorithm~\ref{alg:direction_computation} implements the direction computation. We deliberately split the two methods into separate components to emphasize the modularity of the \texttt{Uno} framework.

\begin{algorithm}[h!]
\caption{\texttt{Uno}: trust-region funnel restoration SQP method.}
\label{alg:tr_funnel_restoration_sqp}
\scriptsize
\SetAlgoVlined
\KwIn{initial primal-dual iterate $(x^{(0)},\constraintmultipliers^{(0)}, \boundmultipliers^{(0)})$}
$k \gets 0$ \;
$\mathit{termination} \gets \mathit{false}$ \;
\begin{constraint_relaxation}
$\mathit{phase} \gets$ Optimality \;
\end{constraint_relaxation}
\begin{globalization_strategy}
$\tau^{(0)} = \max \left[\bar{\tau},\bar{\kappa} h^{(0)}\right]$
\end{globalization_strategy}
\Repeat{$\mathit{termination}$} {
    \begin{globalization_mechanism}
    Set inner iteration counter $l \gets 0$ \;
    \Repeat{$(\trial{x}^{(k+1, l)}, \trial{\constraintmultipliers}^{(k+1, l)}, \trial{\boundmultipliers}^{(k+1, l)})$ is acceptable} {
        \begin{constraint_relaxation}
        Compute primal-dual solution $(d^{(k,l)}, \trial{\constraintmultipliers}^{(k+1)}, \trial{\boundmultipliers}^{(k+1)})$ (Algorithm~\ref{alg:direction_computation}) \;
        \end{constraint_relaxation}
        Assemble trial iterate $\trial{x}^{(k+1,l)} \equaldef x^{(k)} + d^{(k, l)}$ \;
        Reset the multipliers corresponding to the active trust region \;
        \begin{constraint_relaxation}
        Determine whether trial iterate is $\mathit{acceptable}$ (Algorithm~\ref{alg:acceptance}) \;
        \end{constraint_relaxation}
        \eIf{$acceptable$} {
            \lIf{trust region is active at $d^{(k,l)}$} {
                increase radius $\TRradius^{(l)}$
            }
        }{
            Decrease radius $\TRradius^{(l)}$ \;
            $l \gets l+1$ \;
        }
    }
    \end{globalization_mechanism}
    Update $(x^{(k+1)}, \constraintmultipliers^{(k+1)}, \boundmultipliers^{(k+1}) \gets (\trial{x}^{(k+1, l)}, \trial{\constraintmultipliers}^{(k+1, l)}, \trial{\boundmultipliers}^{(k+1, l)})$ \;
    \lIf{termination criteria satisfied at $(x^{(k+1)}, \constraintmultipliers^{(k+1)}, \boundmultipliers^{(k+1})$}{$\mathit{termination \gets true}$}
	$k \gets k+1$ \;
}
\Return $(x^{(k)}, \constraintmultipliers^{(k)}, \boundmultipliers^{(k)})$
\end{algorithm}

\begin{algorithm}[h!]
\caption{\texttt{Uno}: line-search funnel restoration SQP method.}
\label{alg:ls_funnel_restoration_sqp}
\scriptsize
\SetAlgoVlined
\KwIn{initial primal-dual iterate $(x^{(0)},\constraintmultipliers^{(0)}, \boundmultipliers^{(0)})$}
$k \gets 0$ \;
$\mathit{termination} \gets \mathit{false}$ \;
\begin{constraint_relaxation}
$\mathit{phase} \gets$ Optimality \;
\end{constraint_relaxation}
\begin{globalization_strategy}
$\tau^{(0)} = \max \left[\bar{\tau},\bar{\kappa} h^{(0)}\right]$
\end{globalization_strategy}
\Repeat{$\mathit{termination}$} {
    \begin{globalization_mechanism}
    \begin{constraint_relaxation}
    Compute primal-dual solution $(d^{(k)}, \trial{\constraintmultipliers}^{(k+1)}, \trial{\boundmultipliers}^{(k+1)})$ (Algorithm~\ref{alg:direction_computation}) \;
    \end{constraint_relaxation}
    $\alpha^{(0)} \gets 1$ \;
    Set inner iteration counter $l \gets 0$ \;
    
    \Repeat{$(\trial{x}^{(k+1, l)}, \trial{\constraintmultipliers}^{(k+1, l)}, \trial{\boundmultipliers}^{(k+1, l)})$ is acceptable} {
        Assemble trial iterate $\trial{x}^{(k+1,l)} \equaldef x^{(k)} + \alpha^{(l)} d^{(k)}$ \;
        \begin{constraint_relaxation}
        Determine whether trial iterate is $\mathit{acceptable}$ (Algorithm~\ref{alg:acceptance}) \;
        \end{constraint_relaxation}
        \If{not $acceptable$} {
            Decrease step length $\alpha^{(l)}$ \;
            $l \gets l+1$ \;
        }
        \If{$\alpha^{(l)}<\alpha_{\min}$ (step-size too small)} {
            \begin{constraint_relaxation}
            $\mathit{phase} \gets$ Restoration \;
            $x^\mathit{resto} \gets x^{(k)}$ \;
            Compute primal-dual solution $(d^{(k)}, \trial{\constraintmultipliers}^{(k+1)}, \trial{\boundmultipliers}^{(k+1)})$ (Algorithm~\ref{alg:direction_computation}) \;
            \end{constraint_relaxation}
            $\alpha^{(l)} \gets 1$ \;
        }
    }
    \end{globalization_mechanism}
    Update $(x^{(k+1)}, \constraintmultipliers^{(k+1)}, \boundmultipliers^{(k+1}) \gets (\trial{x}^{(k+1, l)}, \trial{\constraintmultipliers}^{(k+1, l)}, \trial{\boundmultipliers}^{(k+1, l)})$ \;
    \lIf{termination criteria satisfied at $(x^{(k+1)}, \constraintmultipliers^{(k+1)}, \boundmultipliers^{(k+1})$}{$\mathit{termination \gets true}$}
	$k \gets k+1$ \;
}
\Return $(x^{(k)}, \constraintmultipliers^{(k)}, \boundmultipliers^{(k)})$
\end{algorithm}

\begin{algorithm}[h!]
\caption{\texttt{Uno}: QP direction computation}
\label{alg:direction_computation}
\scriptsize
\SetAlgoVlined
\KwIn{current iterate $(x^{(k)}, \constraintmultipliers^{(k)}, \boundmultipliers^{(k)})$}
\begin{constraint_relaxation}
\If{$\mathit{phase}$ = Optimality} {
    $(d^{(k,l)}, \trial{\constraintmultipliers}^{(k+1)}, \trial{\boundmultipliers}^{(k+1)}) \gets$ solve \colorbox{subproblem_color}{subproblem \ref{eq:QP-subproblem} or \ref{eq:TR-QP}} \;
    
    \If{\colorbox{subproblem_color}{subproblem} infeasible} {
        $\mathit{phase} \gets$ Restoration \;
        $x^\mathit{resto} \gets x^{(k)}$ \;
        $\constraintmultipliers^{(k)} \gets 0$ \;
    }
}
\If{$\mathit{phase}$ = Restoration} {
    $(d^{(k,l)}, \trial{\constraintmultipliers}^{(k+1)}, \trial{\boundmultipliers}^{(k+1)}) \gets$ solve \colorbox{subproblem_color}{feasibility subproblem $(FQP(x^{(k)}))$ or \ref{eq:FQP}} \; 
}
\end{constraint_relaxation}
\Return $(d^{(k,l)}, \trial{\constraintmultipliers}^{(k+1)}, \trial{\boundmultipliers}^{(k+1)})$
\end{algorithm}
\section{Global Convergence of Trust-Region Funnel SQP Algorithm}
\label{sec:global_convergence_proofs}



In this section we prove that the trust-region funnel SQP method converges either to a feasible point, which is a KKT point if a constraint qualification holds, or to a stationary point of the constraint violation.
We start by stating our basic assumptions.
\begin{assumption}[Standard assumptions~\cite{fletcher1997}]\ \\
\label{assumption:unified_feasibility_convergence}
\vspace{-6mm}
\begin{enumerate}
\item All points attained by the funnel algorithm lie in a nonempty compact set $X$.
\item The functions $f$ and $c$ are twice continuously differentiable on an open set containing $X$.
\item The Hessian matrices of the objective and constraints are uniformly bounded for all $x \in X$; in other words, there exists an $M > 0$ such that the Hessian matrices $H(x)$ satisfy $\Vert H(x) \Vert_2 \leq M$ for all $x \in X$.
\end{enumerate}
\end{assumption}
In particular, these assumptions ensure that the problem functions are bounded and that the objective function $f$ is bounded from below.

\subsection{Convergence to Feasibility}

The convergence to a feasible point is independent of whether we use a trust-region or a line search mechanism and instead relies on the switching condition, the boundedness of $f$, and the funnel mechanism.
In Theorem~\ref{theorem:convergence_to_feasibility} we prove that the algorithm generates a sequence of iterates that converges
toward feasibility.
This theorem implicitly assumes that the algorithm is well defined (i.e., that the inner iteration terminates) and generates an infinite sequence, which is shown below in Lemma~\ref{lemma:finite_inner_loop} for the trust-region method (a similar result follows easily for the line-search method because we switch to a restoration phase once the step size $\alpha$ becomes smaller than $\alpha_{\min}$).

\begin{theorem}
\label{theorem:convergence_to_feasibility}
Assume that the algorithm does not terminate finitely at a KKT point, and consider sequences $\{\tau^{(k)}\}$, $\{h^{(k)}\}$, and $\{f^{(k)}\}$ such that $h^{(k)} \geq 0$, $f^{(k)}$ is bounded below and $h^{(k+n)} \leq \beta \tau^{(k)}$ for all $k, n \in \N$. Furthermore, let constants $\beta \in (0, 1)$, $\kappa \in (0, 1)$ be given. It holds either that
\begin{align}
\label{eq:h_type}
\text{($h$-type step)} \quad
h^{(k+1)} \leq \beta \tau^{(k)} \quad \text{and} \quad \tau^{(k+1)} = (1 - \kappa) h^{(k+1)} + \kappa \tau^{(k)}
\end{align}
or that
\begin{align}
\label{eq:f_type}
\text{($f$-type step)} \quad
\tau^{(k+1)} = \tau^{(k)} \quad \text{and} \quad f^{(k)} - f^{(k+1)} \geq \sigma \delta (h^{(k)})^2.
\end{align}
In both cases, it follows that $h^{(k)} \to 0$ for $k \to \infty$.
\end{theorem}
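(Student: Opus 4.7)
The plan is to argue by case analysis on whether the set of $h$-type step indices is finite or infinite, exploiting in each case a different monotonicity property.

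First I would observe the basic structural facts that fall immediately out of the hypotheses. Taking $n = 0$ in the assumption $h^{(k+n)} \le \beta \tau^{(k)}$ gives $h^{(k)} \le \beta \tau^{(k)}$ for every $k$, so it suffices to show either $\tau^{(k)} \to 0$ or to directly control $h^{(k)}$. Also, both update rules imply $\tau^{(k+1)} \le \tau^{(k)}$: this is obvious for \eqref{eq:f_type}, and for \eqref{eq:h_type} it follows from $h^{(k+1)} \le \beta \tau^{(k)} < \tau^{(k)}$, so $\tau^{(k+1)} = (1-\kappa) h^{(k+1)} + \kappa \tau^{(k)} < \tau^{(k)}$. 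Thus $\{\tau^{(k)}\}$ is monotonically non-increasing and bounded below by $0$, hence convergent to some limit $\tau^* \ge 0$.

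For the first case, suppose that only finitely many indices correspond to $h$-type steps. Then for all $k$ beyond some $K$, step \eqref{eq:f_type} occurs, so $\tau^{(k)}$ is constant for $k \ge K$ and the inequality $f^{(k)} - f^{(k+1)} \ge \sigma \delta (h^{(k)})^2$ holds. Summing the telescoping bound from $K$ to $N$ and using that $f^{(k)}$ is bounded below yields
\begin{equation*}
\sigma \delta \sum_{k=K}^{N} (h^{(k)})^2 \;\le\; f^{(K)} - f^{(N+1)} \;\le\; f^{(K)} - \inf_k f^{(k)} \;<\; \infty.
\end{equation*}
Letting $N \to \infty$ shows $\sum_k (h^{(k)})^2 < \infty$, which forces $h^{(k)} \to 0$.

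For the second case, suppose $h$-type steps occur infinitely often. At each such step, substituting $h^{(k+1)} \le \beta \tau^{(k)}$ into \eqref{eq:h_type} gives
\begin{equation*}
\tau^{(k+1)} \;\le\; (1-\kappa) \beta \tau^{(k)} + \kappa \tau^{(k)} \;=\; \big( \beta + \kappa(1-\beta) \big) \tau^{(k)} \;=\; \rho \, \tau^{(k)},
\end{equation*}
where $\rho \equaldef \beta + \kappa(1-\beta) \in (0,1)$ since $\beta, \kappa \in (0,1)$. On $f$-type steps, $\tau^{(k+1)} = \tau^{(k)}$. Hence, if there are infinitely many $h$-type indices, iterating the contraction shows $\tau^{(k)} \to 0$, and the bound $h^{(k)} \le \beta \tau^{(k)}$ then delivers $h^{(k)} \to 0$.

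I do not foresee a serious obstacle here; the only subtlety is verifying that the contraction constant $\rho$ satisfies $\rho < 1$ (which follows by writing $\rho = 1 - (1-\beta)(1-\kappa)$) and confirming that the hypothesis $h^{(k+n)} \le \beta \tau^{(k)}$ really does provide the decisive bound at $n = 0$, since otherwise on $f$-type steps the raw funnel acceptance would only give $h \le \tau^{(k)}$ rather than $h \le \beta \tau^{(k)}$.
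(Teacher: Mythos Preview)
Your proof is correct and follows essentially the same approach as the paper: a case split on whether there are infinitely many $h$-type steps, with a telescoping sum and boundedness of $f$ in the finite case, and a geometric contraction of $\tau^{(k)}$ with factor $1-(1-\beta)(1-\kappa)$ in the infinite case. Your version is slightly more explicit in deducing $h^{(k)}\to 0$ from $\tau^{(k)}\to 0$ via the bound $h^{(k)}\le\beta\tau^{(k)}$, which the paper leaves implicit.
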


\begin{proof}
We study two cases, depending on whether there exists an infinite sequence of $h$-type steps.
\begin{enumerate}
\item If the number of $h$-type updates \eqref{eq:h_type} is infinite,
the funnel update rule implies that
\begin{align*}
\tau^{(k+1)} & = (1 - \kappa) h^{(k+1)} + \kappa \tau^{(k)} \\
            & \le (1 - \kappa) \beta \tau^{(k)} + \kappa \tau^{(k)} \\
            & \le \left( 1 - (1 - \beta)(1 - \kappa) \right) \tau^{(k)}.
\end{align*}
Therefore, $\tau^{(k+1)} \le \theta \tau^{(k)}$, where $\theta \equaldef 1 - (1 - \beta)(1 - \kappa) \in (0, 1)$. Thus $\tau^{(k)} \to 0$ for $k \to \infty$.

\item If the number of $h$-type steps is finite, there exists a $\overline{k}\in\N$ such that for all $k \geq \overline{k}$ only $f$-type updates are performed, that is, both the switching condition ${\Delta m_f^{(k)}(d) \geq \delta (h^{(k)})^2}$ and the Armijo sufficient decrease condition $\Delta f^{(k)} \geq \sigma \Delta m_f^{(k)}(d)$ are satisfied. In the case of the line-search mechanism we have $\Delta f^{(k)} \geq \alpha \sigma \Delta m_f^{(k)}(d) > \alpha_{\min} \sigma \Delta m_f^{(k)}(d)$, because the step was not a restoration step. 
We then sum the switching condition \eqref{eq:f_type} over $k$ from $\overline{k}$ to a given $\overline{k} + N$ and observe that
\begin{align*}
\sum_{k=\overline{k}}^{\overline{k}+N} \left( f^{(k)} - f^{(k+1)} \right) \geq \sigma \delta \sum_{k=\overline{k}}^{\overline{k}+N} (h^{(k)})^2,
\end{align*}
which is equivalent to
\begin{align*}
f^{(\overline{k})} - f^{(\overline{k}+N)} \geq \sigma \delta \sum_{k=\overline{k}}^{\overline{k}+N} (h^{(k)})^2,
\end{align*}
where the right-hand side is multiplied by $\alpha_{\min}>0$ for the line-search mechanism.
For $N \to \infty$, $\displaystyle \sum_{k=\overline{k}}^{\overline{k} + N} (h^{(k)})^2$ is bounded (because $f$ is bounded below by Assumption~\ref{assumption:unified_feasibility_convergence}), and therefore $h^{(k)}\to 0$.
\end{enumerate}
\end{proof}

\subsection{Global Convergence of Trust-Region Funnel Method to Stationary Points}

The remainder of the global convergence analysis for trust-region and line-search methods differs significantly, and here we  analyze only trust-region methods.

The trust-region funnel SQP method has three possible outcomes: (1) it terminates finitely or converges in the limit at a KKT point that satisfies MFCQ; (2) it converges to a Fritz John point at which MFCQ fails; or (3) it converges to a stationary point of the constraint violation. Outcome (3) is as strong as we can hope for in some sense, because finding a feasible point of \eqref{eq:NCO} is just as hard as finding a stationary point.

In our analysis we concentrate on outcomes (1) and (2) and  consider only infinite sequences. 
Similar to the global convergence paper of the filter method~\cite{fletcher2002a}, we first show properties in a neighborhood of the feasible set. We start with two lemmas proven in \cite{fletcher2002a}
(due to a different definition of \eqref{eq:NCO}, we give an adjusted proof).
\begin{lemma}[Lemma 2 in \cite{fletcher2002a}]
\label{lemma:1Dminimization}
Consider minimizing a quadratic function $\phi(\alpha)$, $\phi \colon \R \rightarrow \R$ on the interval $\alpha \in [0, 1]$ when $\phi^{\prime}(0) < 0$. A necessary and sufficient condition for the minimizer to be at $\alpha = 1$ is $\phi^{\prime \prime}+\phi^{\prime}(0) \leq 0$. In this case it follows that $\phi(0) - \phi(1) \geq$ $-\frac{1}{2} \phi^{\prime}(0)$.
\end{lemma}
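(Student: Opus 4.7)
The plan is to expand $\phi$ as a quadratic, split into the concave and convex cases, and identify exactly when the unconstrained minimizer lies at or beyond $\alpha = 1$. Write $\phi(\alpha) = \phi(0) + \phi'(0)\alpha + \tfrac{1}{2}\phi''\alpha^2$, where $\phi''$ is the (constant) second derivative, so that $\phi'(\alpha) = \phi'(0) + \phi''\alpha$.

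First I would handle the concave case $\phi'' \le 0$. Since $\phi'(0) < 0$ by hypothesis, one has $\phi'(\alpha) = \phi'(0) + \phi''\alpha \le \phi'(0) < 0$ for all $\alpha \in [0,1]$, so $\phi$ is strictly decreasing on $[0,1]$ and the minimizer is $\alpha = 1$. Note that $\phi'' \le 0 < -\phi'(0)$ means $\phi'' + \phi'(0) < 0$, so the claimed characterization holds. Next, in the convex case $\phi'' > 0$, the unconstrained minimizer is $\alpha^* = -\phi'(0)/\phi'' > 0$, and the restricted minimizer on $[0,1]$ is at $\alpha = 1$ if and only if $\alpha^* \ge 1$, i.e., $-\phi'(0) \ge \phi''$, which is precisely $\phi'' + \phi'(0) \le 0$. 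Combining the two cases yields the necessary and sufficient condition.

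For the reduction bound, I would just compute directly:
\begin{equation*}
\phi(0) - \phi(1) = -\phi'(0) - \tfrac{1}{2}\phi''.
\end{equation*}
Under the condition $\phi'' + \phi'(0) \le 0$, we have $\phi'' \le -\phi'(0)$, so $-\tfrac{1}{2}\phi'' \ge \tfrac{1}{2}\phi'(0)$, and hence
\begin{equation*}
\phi(0) - \phi(1) \ge -\phi'(0) + \tfrac{1}{2}\phi'(0) = -\tfrac{1}{2}\phi'(0),
\end{equation*}
as claimed. There is no genuine obstacle here; the only thing to watch is the sign bookkeeping in the convex case and making sure that the boundary case $\phi'' + \phi'(0) = 0$ (where $\alpha^* = 1$) is included as a minimizer at $\alpha = 1$.
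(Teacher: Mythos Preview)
Your proof is correct. The paper does not actually prove this lemma; it merely cites it as Lemma~2 of \cite{fletcher2002a}, so there is no in-paper argument to compare against, and your elementary case split on the sign of $\phi''$ together with the direct computation of $\phi(0)-\phi(1)$ is the standard way to establish the result.
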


\begin{lemma}
Let the standard assumptions hold, and let $d$ be a feasible point of \ref{eq:TR-QP}. It then follows that
\begin{align}    
& \Delta f^{(k)} \geq \Delta m_f^{(k)} - n (\TRradius^{(l)})^2 M, \label{eq:ared_pred_relation} \\
&\left| c_i \left( x^{(k)} + d \right) \right| \le \frac{1}{2} n (\TRradius^{(l)})^2 M, \quad i \in \{1, \ldots, m\}.
\label{eq:constraint_bounds}
\end{align}
\end{lemma}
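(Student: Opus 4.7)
The plan is to derive both inequalities from a second-order Taylor expansion of $f$ and each $c_i$ around $x^{(k)}$, exploiting the fact that feasibility of $d$ in \ref{eq:TR-QP} forces the linearized equalities $c^{(k)} + (\nabla c^{(k)})^T d = 0$, and then converting the trust-region bound $\|d\|_\infty \le \TRradius^{(l)}$ into the Euclidean bound $\|d\|_2^2 \le n (\TRradius^{(l)})^2$ needed to control quadratic forms.

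I would start with the constraint bound \eqref{eq:constraint_bounds}, which is the cleaner of the two. Applying Taylor's theorem with Lagrange remainder to each constraint yields $c_i(x^{(k)}+d) = c_i^{(k)} + (\nabla c_i^{(k)})^T d + \tfrac{1}{2} d^T \nabla^2 c_i(\xi_i) d$ for some $\xi_i$ on the segment $[x^{(k)}, x^{(k)}+d]$. QP-feasibility eliminates the first two terms, leaving only the quadratic remainder, and Cauchy--Schwarz together with Assumption~\ref{assumption:unified_feasibility_convergence}.3 gives $|c_i(x^{(k)}+d)| \le \tfrac{1}{2}\|d\|_2^2 M \le \tfrac{1}{2} n (\TRradius^{(l)})^2 M$, as required.

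For the predicted-vs-actual reduction bound \eqref{eq:ared_pred_relation}, I would Taylor-expand $f$ to second order around $x^{(k)}$ and subtract the quadratic model $m_f^{(k)}(d)$ from $f(x^{(k)}+d)$. The linear terms cancel, producing $\Delta f^{(k)} - \Delta m_f^{(k)} = \tfrac{1}{2} d^T\bigl(W_1^{(k)} - \nabla^2 f(\xi_f)\bigr) d$. Applying the triangle inequality, Assumption~\ref{assumption:unified_feasibility_convergence}.3, and $\|d\|_2^2 \le n(\TRradius^{(l)})^2$ yields $|\Delta f^{(k)} - \Delta m_f^{(k)}| \le n(\TRradius^{(l)})^2 M$, which implies \eqref{eq:ared_pred_relation}.

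The main obstacle is controlling $\|W_1^{(k)}\|_2$: as defined in \eqref{eq:lagrangian-hessian}, $W_1^{(k)}$ is the Lagrangian Hessian, which includes $\lambda_j^{(k)}$-weighted terms $\nabla^2 c_j(x^{(k)})$, and Assumption~\ref{assumption:unified_feasibility_convergence}.3 as literally written bounds only the individual Hessians of $f$ and each $c_j$. I would read the assumption, following standard filter-method convention, as implicitly encompassing a uniform bound on $W_1^{(k)}$ (either because the multipliers remain bounded on the compact set $X$, or because a uniformly bounded quasi-Newton surrogate is used). A cleaner alternative that avoids bounding $W_1^{(k)}$ directly is to add the identity $\hat\lambda^{(k+1)T}(c^{(k)} + (\nabla c^{(k)})^T d) = 0$ (again QP-feasibility) to $\Delta m_f^{(k)}$ and Taylor-expand the Lagrangian $\Lag(\cdot,1,\hat\lambda^{(k+1)},\hat\mu^{(k+1)})$ instead of $f$ itself, so that the remainder becomes a difference of Lagrangian Hessians evaluated at two nearby points; this delivers the same estimate with $M$ the uniform Lagrangian-Hessian bound.
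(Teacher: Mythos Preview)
Your proposal is correct and follows essentially the same route as the paper: the paper defers \eqref{eq:ared_pred_relation} to \cite{fletcher2002a}, where the argument is exactly the second-order Taylor expansion you outline, and for \eqref{eq:constraint_bounds} the paper writes out precisely your Taylor-plus-feasibility computation. Your caveat about $\|W_1^{(k)}\|_2$ is well taken---the assumption as stated literally bounds only the Hessians of $f$ and the $c_j$, yet the paper (like \cite{fletcher2002a}) tacitly treats $M$ as a bound on $W_1^{(k)}$ too, as you can see in the proof of Lemma~\ref{lemma:finite_inner_loop} where $s^T W_1^{(k)} s \le M$ is used directly; so your proposed reading is the intended one.
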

\begin{proof}
Relation \eqref{eq:ared_pred_relation} is derived in \cite{fletcher2002a}. 
For $i \in \{1, \ldots, m\}$, there exists $z$ on the line segment from $x^{(k)}$ to $x^{(k)}+d$ such that
\begin{align*}
c_i \left(x^{(k)} + d\right) = c_i^{(k)} + (\nabla c_i^{(k)})^T d+\frac{1}{2} d^T \nabla^2 c_i(z) d = \frac{1}{2} d^T \nabla^2 c_i(z) d
\end{align*}
by feasibility of $d$. Taking the absolute value and using the estimates similar to \cite{fletcher2002a}, we get  \eqref{eq:constraint_bounds}.
\end{proof}

\begin{lemma}\label{lemma:sufficient_reduction}
Let standard assumptions hold. If $d$ solves \ref{eq:TR-QP}, $x^{(k)}+d$ is acceptable to the funnel if $\TRradius^2 \leq 2 \beta \tau^{(k)} /(m n M)$.
\end{lemma}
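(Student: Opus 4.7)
The plan is to apply the componentwise constraint bound \eqref{eq:constraint_bounds} directly and then use the $\ell_1$ definition of $h$ to pass from a pointwise bound to a bound on the infeasibility measure; the hypothesis on $\TRradius^2$ is then exactly what is needed to cap $h(x^{(k)}+d)$ by $\beta \tau^{(k)}$.

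Concretely, I would first observe that since $d$ is feasible for \eqref{eq:TR-QP} we may invoke \eqref{eq:constraint_bounds} for each $i \in \{1,\ldots,m\}$:
\begin{align*}
\bigl|c_i(x^{(k)}+d)\bigr| \le \tfrac{1}{2} n (\TRradius^{(l)})^2 M.
\end{align*}
Summing this estimate over all $m$ constraints and recalling that $h(x) = \|c(x)\|_1$, I would obtain
\begin{align*}
h(x^{(k)}+d) = \sum_{i=1}^{m} \bigl|c_i(x^{(k)}+d)\bigr| \le \tfrac{1}{2} m n (\TRradius^{(l)})^2 M.
\end{align*}
Plugging in the hypothesis $(\TRradius^{(l)})^2 \le 2 \beta \tau^{(k)}/(mnM)$ yields $h(x^{(k)}+d) \le \beta \tau^{(k)}$.

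To finish, I would simply note that this single inequality simultaneously certifies the funnel condition \eqref{eq:funnel_condition}, since $\beta \tau^{(k)} \le \tau^{(k)}$, and the $h$-type sufficient decrease condition \eqref{eq:funnel_sufficient_decrease}. Hence, whether the switching condition \eqref{eq:switching_condition} holds (in which case acceptance will also depend on the Armijo test in a companion argument) or fails (in which case $x^{(k)}+d$ is accepted as an $h$-type step with the funnel update \eqref{eq:funnel_update}), the trial point cannot be rejected on infeasibility grounds for trust-region radii this small.

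Since the argument is a one-line chain of inequalities, there is essentially no obstacle; the only subtlety is making explicit the reading that "acceptable to the funnel" in this lemma refers to the $h$-type branch of the acceptance test, and that this lemma supplies the infeasibility half of a two-sided estimate used in the subsequent global-convergence analysis (the optimality half, producing a model-reduction lower bound driving the switching/Armijo conditions, will come from a separate Cauchy-style argument built on Lemma~\ref{lemma:1Dminimization} and \eqref{eq:ared_pred_relation}).
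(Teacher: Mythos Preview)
Your argument is correct and essentially identical to the paper's own proof: both apply the componentwise bound \eqref{eq:constraint_bounds}, sum over $i=1,\ldots,m$ to obtain $h(x^{(k)}+d)\le \tfrac{1}{2}mn\TRradius^2 M$, and then use the hypothesis on $\TRradius^2$ to conclude $h(x^{(k)}+d)\le\beta\tau^{(k)}$. Your interpretive remark that this actually establishes the funnel \emph{sufficient decrease} condition \eqref{eq:funnel_sufficient_decrease}, not merely the funnel condition \eqref{eq:funnel_condition}, matches the paper's own observation immediately following the proof.
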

\begin{proof}
It holds that
\begin{align*}
h(x^{(k)}+d) = \Vert c(x^{(k)}+d) \Vert_1 = \sum_{i=1}^{m} \vert c_i(x^{(k)}+d)\vert \leq \sum_{i=1}^{m} \frac{1}{2}n \TRradius^2 M = \frac{1}{2} n m \TRradius^2 M.
\end{align*}
If $\TRradius^2 \leq 2 \beta \tau^{(k)} / (nmM)$, then $ h(x^{(k)}+d) \le \beta \tau^{(k)}$; in other words, the step is acceptable to the funnel.
\end{proof}

We note that Lemma~\ref{lemma:sufficient_reduction} actually proves the funnel sufficient condition.

\begin{lemma}
\label{lemma:sufficient_reduction_at_feasible_point}
Let standard assumptions hold, and let $x^{\circ} \in X$ be a feasible point of problem \eqref{eq:NCO} at which MFCQ holds but which is not a KKT point. Then, there exist a neighborhood $\mathcal{N}^{\circ}$ of $x^{\circ}$ and positive constants $\varepsilon, \mu$, and $\kappa$ such that for all $x \in \mathcal{N}^{\circ} \cap X$ and all $\TRradius$ for which
\begin{align*}
\mu h(x) \leq \TRradius \leq \kappa
\end{align*}
it follows that \ref{eq:TR-QP} has a feasible solution $d$ at which the predicted reduction \eqref{eq:pred}
satisfies
\begin{align}
\label{eq:lower_bound_pred}
\Delta m_f \geq \frac{1}{3} \TRradius \varepsilon,
\end{align}
the sufficient reduction condition \eqref{eq:sufficient_reduction} holds, and the actual reduction \eqref{eq:ared} satisfies
\begin{align}
\Delta f \geq \TRradius \sigma h(x+d). 
\end{align}
\end{lemma}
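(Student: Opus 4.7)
The proof I have in mind follows closely the classical analysis of Fletcher, Leyffer, and Toint for the trust-region filter SQP. The key observation is that near a feasible non-KKT point the QP step can be decomposed into a normal component that restores linearized feasibility and a tangential component that delivers objective reduction, and the geometry of MFCQ allows us to control both pieces in a neighborhood of $x^{\circ}$.

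First, I would exploit MFCQ at $x^{\circ}$: since the active bound/equality gradients are compatible in the MFCQ sense and the reduced gradient of $f$ does not vanish (because $x^{\circ}$ is not a KKT point), there exists a direction $d^{\circ}$ with $\nabla c(x^{\circ})^T d^{\circ} = 0$, $d^{\circ}_i>0$ on active bounds, and $(\nabla f(x^{\circ}))^T d^{\circ} \le -\varepsilon'$ for some $\varepsilon' > 0$. By continuity of $f$, $c$, $\nabla f$, $\nabla c$ on $X$, there is a neighborhood $\mathcal{N}^{\circ}$ of $x^{\circ}$ on which MFCQ persists uniformly and a tangential descent direction of slope at most $-\varepsilon$ (for some $\varepsilon>0$) exists. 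This is the step that provides the basic descent on $m_f$.

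Second, I would construct a feasible point of \ref{eq:TR-QP} by normal/tangential decomposition. A standard argument (using MFCQ-uniform linear independence of the relevant constraints) gives a normal step $n^{(k)}$ with $\nabla c^{(k)T} n^{(k)} = -c^{(k)}$, $x^{(k)} + n^{(k)} \ge 0$, and $\lVert n^{(k)}\rVert_\infty \le C h(x^{(k)})$ for a constant $C$ depending only on the MFCQ data in $\mathcal{N}^{\circ}$. Choosing $\mu$ large enough (say $\mu \ge 2C$) ensures $\lVert n^{(k)} \rVert_\infty \le \TRradius^{(l)}/2$ whenever $\mu h(x) \le \TRradius^{(l)}$. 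I would then take a tangential step $t^{(k)} = \alpha d^{\circ}/\lVert d^{\circ}\rVert_\infty$ for $\alpha \le \TRradius^{(l)}/2$; the composite $d = n^{(k)}+t^{(k)}$ is feasible for \ref{eq:TR-QP}, and Lemma~\ref{lemma:1Dminimization} applied to the 1D quadratic $\phi(\alpha)=m_f^{(k)}(n^{(k)}+\alpha t)$ yields (after possibly shrinking $\kappa$ so the curvature term is dominated) a predicted reduction at least $\tfrac{1}{3}\TRradius^{(l)}\varepsilon$, giving \eqref{eq:lower_bound_pred}. Since the exact QP minimizer can only do better, the bound transfers to the actual QP solution.

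Third, combining \eqref{eq:lower_bound_pred} with the previous lemma's estimate $\Delta f \ge \Delta m_f - n(\TRradius^{(l)})^2 M$ gives
\[
\Delta f \;\ge\; \tfrac{1}{3}\TRradius^{(l)}\varepsilon - n M (\TRradius^{(l)})^2,
\]
so shrinking $\kappa$ so that $nM\kappa \le \tfrac{1}{3}(1-\sigma)\varepsilon$ yields the sufficient decrease \eqref{eq:sufficient_reduction}. Finally, \eqref{eq:constraint_bounds} gives $h(x^{(k)}+d) \le \tfrac{1}{2} nm (\TRradius^{(l)})^2 M$, so $\TRradius^{(l)}\sigma h(x^{(k)}+d) \le \tfrac{1}{2}\sigma nmM(\TRradius^{(l)})^3$, which is dominated by $\tfrac{1}{6}\TRradius^{(l)}\varepsilon$ after a final reduction of $\kappa$, yielding $\Delta f \ge \TRradius^{(l)}\sigma h(x^{(k)}+d)$.

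The main technical obstacle is the first construction: producing the normal step with the linear-in-$h$ bound and a tangential descent direction that remain valid uniformly over $\mathcal{N}^{\circ}$ while respecting the nonnegativity constraints $x+d\ge 0$. This is precisely the role of MFCQ (active-set stability plus an interior direction), and it is the place where one must be careful to specify $\mathcal{N}^{\circ}$ small enough that the index of active bounds cannot expand, so the same $d^{\circ}$ works across the neighborhood. Once this is in place, the remaining estimates are quantitative matchings of powers of $\TRradius^{(l)}$ controlled by adjusting $\kappa$.
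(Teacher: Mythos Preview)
Your proposal is correct and follows essentially the same approach as the paper, which simply defers to \cite[Lemma~4]{fletcher2002a}; you have faithfully reconstructed the normal/tangential decomposition argument from that reference, using MFCQ to obtain a uniformly valid descent direction and a normal step bounded linearly in $h$, then matching powers of $\TRradius$ via Lemma~\ref{lemma:1Dminimization} and the estimates \eqref{eq:ared_pred_relation}--\eqref{eq:constraint_bounds}.
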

\begin{proof}
The proof follows directly from the proof of \cite[Lemma 4]{fletcher2002a}.
\end{proof}

\begin{lemma}
\label{lemma:finite_inner_loop}
Let standard assumptions hold. Then the inner iterations terminate finitely.
\end{lemma}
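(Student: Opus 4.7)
The plan is to argue by contradiction. Suppose that the inner iteration at the outer iterate $x^{(k)}$ does not terminate; since the trust-region radius is strictly reduced on each unsuccessful inner iteration, $\TRradius^{(l)} \to 0$ as $l \to \infty$. I would split on the active phase and on the feasibility of $x^{(k)}$, using Lemmas~\ref{lemma:sufficient_reduction} and~\ref{lemma:sufficient_reduction_at_feasible_point} as the main technical tools.

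In the optimality phase, Algorithm~\ref{alg:direction_computation} switches to the restoration phase as soon as \ref{eq:TR-QP} is infeasible, so without loss of generality assume \ref{eq:TR-QP} is feasible for all sufficiently small $\TRradius^{(l)}$. Consider first the case $h^{(k)} > 0$. Lemma~\ref{lemma:sufficient_reduction} shows that once $(\TRradius^{(l)})^2 \le 2\beta \tau^{(k)}/(nmM)$ the trial iterate satisfies $h(\trial{x}^{(k+1,l)}) \le \beta \tau^{(k)}$, so both the funnel condition and the $h$-type sufficient-decrease condition hold. Moreover, since $\|d^{(k,l)}\|_\infty \le \TRradius^{(l)}$ and the data of \ref{eq:TR-QP} are uniformly bounded by Assumption~\ref{assumption:unified_feasibility_convergence}, one has $\Delta m_f^{(k)}(d^{(k,l)}) \to 0$ while $\delta (h^{(k)})^2 > 0$ is fixed; hence the switching condition fails for small $\TRradius^{(l)}$, and Algorithm~\ref{alg:acceptance} takes the $h$-type branch and accepts the trial iterate, a contradiction. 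In the case $h^{(k)} = 0$, non-termination of the outer loop forces $x^{(k)}$ to be non-KKT, and the paper's implicit MFCQ enables Lemma~\ref{lemma:sufficient_reduction_at_feasible_point} to produce, for all sufficiently small $\TRradius^{(l)}$, a step satisfying the Armijo condition on $f$. The switching condition is trivially satisfied because $h^{(k)}=0$, so the trial iterate is accepted as an $f$-type step, again a contradiction.

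In the restoration phase, \ref{eq:FQP} is always feasible by construction of the elastic variables, so direction computation is well defined. The restoration subproblem is essentially a bound-constrained minimization of $\|c(x)\|_1$, and the paper's standing assumption that a convergent globalization is available in restoration guarantees that the Armijo condition on $h$ required by the restoration branch of Algorithm~\ref{alg:acceptance} is eventually satisfied as $\TRradius^{(l)} \to 0$, yielding acceptance in finitely many inner iterations. The main obstacle in the whole argument is showing that the switching condition \emph{necessarily fails} when $h^{(k)}>0$ and $\TRradius^{(l)}$ is driven to zero; this is what lets the funnel mechanism bypass the much more intricate $f$-type analysis required by filter methods in the infeasible regime, and it crucially exploits the boundedness of the Hessian and gradient data from Assumption~\ref{assumption:unified_feasibility_convergence} together with the trust-region bound $\|d^{(k,l)}\|_\infty \le \TRradius^{(l)}$.
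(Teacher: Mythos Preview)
Your case split on $h^{(k)}$ mirrors the paper's, but the treatment of each case differs. For $h^{(k)}>0$, your sub-case ``\ref{eq:TR-QP} is feasible for all sufficiently small $\TRradius^{(l)}$'' is actually vacuous: since the subproblem imposes the \emph{equality} $c^{(k)}+(\nabla c^{(k)})^T d=0$, any component with $c_i^{(k)}\neq 0$ forces $|(\nabla c_i^{(k)})^T d|=|c_i^{(k)}|$, which is impossible once $\TRradius^{(l)}<|c_i^{(k)}|/\|\nabla c_i^{(k)}\|_1$. The paper exploits this directly to conclude that the algorithm switches to restoration after finitely many inner iterations. Your argument that the switching condition fails because $\Delta m_f^{(k)}(d^{(k,l)})\to 0$ is correct as a piece of reasoning, but it applies to a situation that never arises. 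This is harmless, since you also defer to the restoration assumption when the QP becomes infeasible; it just adds unnecessary work.

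The genuine gap is in the feasible case $h^{(k)}=0$. You invoke Lemma~\ref{lemma:sufficient_reduction_at_feasible_point}, but that lemma has MFCQ at $x^{\circ}$ as an explicit hypothesis, and MFCQ is \emph{not} part of Assumption~\ref{assumption:unified_feasibility_convergence}; there is no ``implicit MFCQ'' in the paper. Lemma~\ref{lemma:finite_inner_loop} must hold at any feasible non-KKT iterate, including ones where MFCQ fails (this is precisely why the main convergence theorem lists ``MFCQ fails'' as a possible outcome). The paper avoids this by constructing the linearized descent direction $s$ directly: from ``$x^{(k)}$ is feasible but not KKT'' and a theorem of the alternative (Farkas/Motzkin), there exists $s$ with $s^T\nabla f^{(k)}<0$, $(\nabla c^{(k)})^T s=0$, and $s_i\ge 0$ on the active set; no constraint qualification is needed. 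A Cauchy-point argument along $s$ (Lemma~\ref{lemma:1Dminimization}) then gives $\Delta m_f^{(k)}(d)\ge\tfrac12\TRradius\eta$, and combining with \eqref{eq:ared_pred_relation} and Lemma~\ref{lemma:sufficient_reduction} yields both the Armijo and funnel conditions for small $\TRradius$. You should also explicitly verify the funnel condition $h(\trial{x}^{(k+1,l)})\le\tau^{(k)}$ in this branch (Lemma~\ref{lemma:sufficient_reduction} supplies it), since Algorithm~\ref{alg:acceptance} checks it before the switching/Armijo tests.
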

\begin{proof}
The active set at a feasible point $x$ is the set of all bound constraints that hold with equality:
\begin{align*}
\mathcal{A}(x) \equaldef \{ i \in \{1, \ldots, n \} ~|~ x_i = 0 \}.
\end{align*}

We denote by $d$ the global solution of \ref{eq:TR-QP}.
If $x^{(k)}$ is a KKT point of \eqref{eq:NCO}, $d = 0$, and the inner loop terminates.
Otherwise, we consider two cases:
\begin{enumerate}
\item $h^{(k)} > 0$: There exists an $i \in \{1, \ldots, m\}$ such that, without loss of generality, $c_i^{(k)} > 0$. For all $d$ such that $\Vert d \Vert_{\infty}\leq \TRradius$, it holds that
\begin{align*}
c_i^{(k)} + (\nabla c_i^{(k)})^T d \geq c_i^{(k)} - \TRradius \| \nabla c^{(k)}_i \|_1.
\end{align*}
If either $\| \nabla c_i^{(k)} \| = 0$ or $\TRradius < |c_i^{(k)}| / \| \nabla c_i^{(k)} \|_1$, we have $c_i^{(k)} - \TRradius \| \nabla c^{(k)}_i \|_1 > 0$.
Thus for $\TRradius$ sufficiently small, \ref{eq:TR-QP} is infeasible, and the inner loop terminates finitely.

\item $h^{(k)}=0$: Inactive bound constraints will stay inactive for any direction $\Vert d \Vert_{\infty}\leq\TRradius$ for sufficiently small $\TRradius$. Therefore, we only consider equality and active inequality constraints. Since $x^{(k)}$ is not a KKT point, there exists $s$ with $\Vert s \Vert = 1$ and $\eta > 0$ such that 
\begin{align*}
s^T \nabla f^{(k)} = -\eta < 0, \quad s^T \nabla c_i^{(k)} = 0 \quad \mathrm{for} \quad i \in \{1, \ldots, m\}, \quad s_i\geq0 \quad\mathrm{for} \quad i \in \mathcal{A}^{(k)}.
\end{align*}
We consider the QP-feasible line segment $\alpha \TRradius s$, $\alpha \in [0, 1]$. We construct the function ${\phi(\alpha) = m_f^{(k)}(\alpha \TRradius s)}$ with the properties
\begin{align*}
\phi'(0) = -\TRradius \eta, \quad \phi'' = \TRradius^2 s^T W_1^{(k)} s \leq \TRradius^2 M.
\end{align*}
Hence, if $\TRradius\leq\eta/M$, then $\phi'(0) + \phi'' \leq 0$. From Lemma \ref{lemma:1Dminimization}, it follows that $\phi(0) -\phi(1)\geq \frac{1}{2} \TRradius \eta$. Because $d$ is globally optimal for \ref{eq:TR-QP}, it holds that
\begin{align}
\label{eq:bound_on_pred}
\Delta m_f^{(k)}(d) \ge \Delta m_f^{(k)}(\TRradius s) = m_f^{(k)}(0) - m_f^{(k)}(\TRradius s) = \phi(0) - \phi(1) \ge \frac{1}{2}\TRradius\eta > 0.
\end{align}
If we choose $\TRradius \le \displaystyle \frac{(1-\sigma) \eta}{2nM}$, then, combining \eqref{eq:ared_pred_relation} and \eqref{eq:bound_on_pred}, we have
\begin{align*}
\Delta f^{(k)} & \overset{\eqref{eq:ared_pred_relation}}{\ge} \Delta m_f^{(k)} - n \TRradius^{(l)} \TRradius^{(l)} M \\
                & \ge \Delta m_f^{(k)} - n \frac{(1-\sigma) \eta}{2nM} \TRradius^{(l)} M = \Delta m_f^{(k)} - (1-\sigma) \frac{1}{2} \TRradius^{(l)} \eta \\
                & \overset{\eqref{eq:bound_on_pred}}{\ge} \Delta m_f^{(k)} - (1-\sigma) \Delta m_f^{(k)}(d) = \sigma \Delta m_f^{(k)} \\
                & > 0 = h^{(k)}.
\end{align*}
These are the necessary conditions of an $f$-type iteration. If $\TRradius^2 \le \displaystyle \frac{2 \beta \tau^{(k)}}{mnM}$, $x^{(k)} + d$ is acceptable to the funnel. Thus, if $\TRradius$ is chosen small enough, an $f$-type step is taken.
\end{enumerate}
\end{proof}

\begin{remark}
We note that the condition that $d$ be the global solution of \ref{eq:TR-QP} holds if the Hessian is positive definite on the null space of the linearized equality constraints. Otherwise, it can be replaced by a Cauchy-point construction that explicitly produces a unit step, $s$ in \eqref{eq:bound_on_pred}, and then requires that \ref{eq:TR-QP} compute a step that is at least as good as the Cauchy prediction.
\end{remark}

\begin{theorem}
If standard assumptions hold, the funnel SQP algorithm has one of the following outcomes:
\begin{enumerate}
\item The restoration phase fails to find a point $x$ that is both acceptable to the funnel and for which \ref{eq:TR-QP} has a feasible direction for some $\TRradius \geq \TRradius^{\circ}$. In this case the restoration phase converges to an infeasible limit point.
\item A KKT point of problem \eqref{eq:NCO} is found ($d = 0$ solves \ref{eq:TR-QP} for some $k$).
\item There exists an accumulation point that is feasible and either is a KKT point or fails to satisfy MFCQ.
\end{enumerate}
\end{theorem}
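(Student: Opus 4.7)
The plan is to proceed by contradiction. Assume that outcomes (1) and (2) do not occur. Lemma~\ref{lemma:finite_inner_loop} then guarantees that each inner loop terminates finitely, and since $d = 0$ never solves \ref{eq:TR-QP} (otherwise outcome (2) would hold), the algorithm generates an infinite sequence of outer iterates $\{x^{(k)}\} \subset X$. By Theorem~\ref{theorem:convergence_to_feasibility}, $h^{(k)} \to 0$, so compactness of $X$ together with continuity of $c$ yields an accumulation point $x^*$ of $\{x^{(k)}\}$ with $h(x^*) = 0$, along some subsequence $\{x^{(k_j)}\}$. It remains to show that $x^*$ is either a KKT point or fails MFCQ.

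Suppose for contradiction that $x^*$ is feasible, satisfies MFCQ, and is not a KKT point. Lemma~\ref{lemma:sufficient_reduction_at_feasible_point} yields a neighborhood $\mathcal{N}^\circ$ of $x^*$ and positive constants $\varepsilon, \mu, \kappa$ such that, for every $x^{(k_j)} \in \mathcal{N}^\circ \cap X$ and every $\TRradius$ with $\mu h^{(k_j)} \leq \TRradius \leq \kappa$, \ref{eq:TR-QP} admits a feasible direction $d$ satisfying $\Delta m_f^{(k_j)} \geq \tfrac{1}{3}\TRradius\varepsilon$, the Armijo sufficient reduction condition \eqref{eq:sufficient_reduction}, and $\Delta f^{(k_j)} \geq \TRradius \sigma h(x^{(k_j)} + d)$. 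Since $h^{(k_j)} \to 0$, the admissible interval $[\mu h^{(k_j)}, \kappa]$ is nonempty for $j$ large.

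The central step, and the main obstacle, is to show that the trust-region radius $\TRradius^{(k_j)}$ accepted by the inner loop is bounded below along this subsequence by a positive constant $\TRradius^\star$. Lemma~\ref{lemma:sufficient_reduction} guarantees funnel-acceptability whenever $\TRradius^2 \leq 2\beta \tau^{(k_j)}/(mnM)$, and Lemma~\ref{lemma:sufficient_reduction_at_feasible_point} then ensures both the switching and Armijo conditions hold for sufficiently small (but not too small) $\TRradius$. Since the trust-region reduction rule decreases $\TRradius^{(l)}$ by at most a bounded factor at each rejection, this precludes arbitrarily small radii near $x^*$. With $\TRradius^{(k_j)} \geq \TRradius^\star$ and $h^{(k_j)} \to 0$, the switching condition $\Delta m_f^{(k_j)} \geq \delta (h^{(k_j)})^2$ is eventually satisfied (the left-hand side is bounded below by $\tfrac{1}{3}\TRradius^\star \varepsilon$ while the right-hand side tends to zero), so each iteration $k_j$ is $f$-type and yields $\Delta f^{(k_j)} \geq \tfrac{\sigma}{3}\TRradius^\star \varepsilon > 0$. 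Summing these uniform decreases along the infinite subsequence contradicts the boundedness of $f$ from below guaranteed by Assumption~\ref{assumption:unified_feasibility_convergence}. Hence outcome (3) must hold.

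The technical crux lies in verifying the uniform lower bound on $\TRradius^{(k_j)}$, which requires carefully aligning the trust-region update rule with the three acceptance conditions (funnel, switching, Armijo) simultaneously near the limit point. Once this is established, the remainder of the argument follows the blueprint of~\cite{fletcher2002a} adapted to the funnel mechanism, where the monotonically decreasing funnel width $\tau^{(k)}$ plays the role of a single-entry filter and provides the required feasibility control.
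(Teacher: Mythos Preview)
Your outline has a genuine gap: it tries to treat all accumulation points uniformly, but the argument you sketch breaks down when the main sequence contains infinitely many $h$-type iterations. In that case $\tau^{(k)} \to 0$ (by the funnel update rule), and the funnel-acceptability bound from Lemma~\ref{lemma:sufficient_reduction}, namely $\TRradius^2 \le 2\beta \tau^{(k_j)}/(mnM)$, forces the admissible radius to shrink to zero. Hence no fixed $\TRradius^\star > 0$ can serve as a uniform lower bound for the accepted trust-region radius along your subsequence, and the key estimate $\Delta m_f^{(k_j)} \ge \tfrac{1}{3}\TRradius^\star \varepsilon$ collapses.

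There is a second, independent flaw in the contradiction step. Even granting $\Delta f^{(k_j)} \ge c > 0$ along the subsequence, summing these decreases does \emph{not} contradict boundedness of $f$: between consecutive indices $k_j+1$ and $k_{j+1}$ the algorithm may take $h$-type steps that increase $f$, so the sum does not telescope. The paper handles this by splitting into two cases. If infinitely many $h$-type steps occur, one selects a subsequence consisting entirely of $h$-type iterations and shows (via the shrinking interval $\mu h^{(k)} \le \TRradius \le \sqrt{2\beta\tau^{(k)}/(mnM)}$, whose upper end eventually exceeds twice the lower end) that such iterations would actually be accepted as $f$-type---a direct contradiction with their type, no summation needed. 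If only finitely many $h$-type steps occur, then $\tau^{(k)}$ is eventually constant, which yields the fixed upper bound $\bar\TRradius$ you were seeking, and moreover $\{f^{(k)}\}$ is eventually strictly decreasing so that $\sum \Delta f^{(k)}$ genuinely converges; the uniform lower bound $\Delta f^{(k)} \ge \tfrac{1}{3}\sigma\varepsilon \min\{\tfrac{1}{2}\bar\TRradius, \TRradius^\circ\}$ then delivers the contradiction. The case split is not cosmetic---it is what makes both the radius bound and the summation argument work.
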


\begin{proof}
It is enough to consider case 3. As proven in Lemma \ref{lemma:finite_inner_loop}, the inner loop is finite, and therefore the trust-region funnel SQP method produces an infinite sequence of iterates. All iterates lie in the compact set $X$; therefore, there exists a converging subsequence. We consider two cases:
\begin{enumerate}
\item The main sequence contains an infinite number of $h$-type steps: in this case we pick a subsequence containing purely $h$-type steps. For $h$-type iterations, we know $h^{(k)}\to 0$ and $\tau^{(k)}\to 0$. In particular, $\tau^{(k+1)} < \tau^{(k)}$. Therefore, there exists a converging subsequence whose index set is denoted by $\mathcal{S}$ and whose limit point is denoted by $x^{\infty}$. For $k\in\mathcal{S}$, it holds that
\begin{align*}
x^{(k)}\to x^{\infty},\quad h^{(k)}\to 0,\quad \tau^{(k+1)} < \tau^{(k)}.
\end{align*}
Thus, $x^{\infty}$ must be feasible. We take another case distinction.
    \begin{enumerate}
    \item MFCQ is not satisfied at $x^{\infty}$; then the claim holds.
    \item MFCQ is satisfied at $x^{\infty}$. For a proof by contradiction, we assume that $x^{\infty}$ is not a KKT point. The vectors $\nabla c_i^{\infty}$ for $i \in \{1, \ldots, m\}$ are linearly independent, and the MFCQ set is not empty. For sufficiently large $k\in\mathcal{S}$, $x^{(k)}$ lies in the neighborhood $\mathcal{N}^{\infty}$. If $QP(x^{(k)},\TRradius)$ has a feasible solution and 
    \begin{align*}
    \mu h^{(k)} \leq \TRradius \leq \min \left\{ \sqrt{\frac{2\beta h_{\max}^{(k)}}{mnM}},\kappa\right\},
    \end{align*}
    the algorithm performs an $f$-type iteration. For $k$ sufficiently large, we get
    \begin{align*}
    \mu h^{(k)} \leq \TRradius \leq \sqrt{\frac{2\beta h_{\max}^{(k)}}{mnM}}.
    \end{align*}
    We see that the upper bound is more than twice the lower bound. Given the update rule, we find a $\TRradius$ that lies in the interval. Therefore an $f$-type step occurs, which is a contradiction; that is,  $x^{\infty}$ is a KKT point.
    \end{enumerate}
\item The main sequence contains finitely many $h$-type steps. Therefore, there exists an index $\overline{k}$ such that for all $k>\overline{k}$ the sequence $\{f^{(k)}\}$ is strictly monotonically decreasing. From Theorem \ref{theorem:convergence_to_feasibility} we know $h^{(k)}\to0$ for $k \ge \overline{k}$. We deduce that any accumulation point $x^{\infty}$ is feasible. Because $f(x)$ is bounded on $X$ and $\{f^{(k)}\}$ is converging and, in particular, it is a Cauchy sequence, it follows that $\sum_{k\geq\overline{k}}\Delta f^{(k)}$ is converging. Consider again two cases:
    \begin{enumerate}
    \item MFCQ is not satisfied; then there is nothing to show.
    \item MFCQ is satisfied, and we proceed again by deriving a contradiction. A sufficient condition for accepting an $f$-type step is that $\TRradius$ lies in the following interval:
    \begin{align*}
        \mu h^{(k)} \leq \TRradius \leq \min \left\{ \sqrt{\frac{2\beta \tau^{(\overline{k})}}{mnM}},\kappa\right\}.
    \end{align*}
    The funnel parameter $\tau^{(\overline{k})}$ on the right side is constant; therefore the right-hand side is constant, and we denote it by $\overline{\TRradius}>0$. For sufficiently large $k$, we can guarantee that the right side is greater than twice the lower bound. The inner loop decreases $\TRradius$ such that  either it falls into the interval and will be accepted as an {$f$-type} step or it is already accepted beforehand. This guarantees that a trust-region radius ${\TRradius^{(k)} \ge \min\{\frac{1}{2} \overline{\TRradius},\TRradius^{\circ}\}}$ is picked. We deduce from \eqref{eq:lower_bound_pred} and \eqref{eq:sufficient_reduction} that \\ ${\Delta f^{(k)}\geq\frac{1}{3}\sigma \varepsilon \min\{\frac{1}{2}\overline{\TRradius},\TRradius^{\circ}\}}$, which is a contradiction to the convergence of $\sum_{k\geq\bar{k}}\Delta f^{(k)}$. Hence, $x^{\infty}$ must be a KKT point.
    \end{enumerate}
\end{enumerate}
\end{proof}



\section{Simulation Results}
\label{sec:simulation_results}

The funnel method has been implemented in \texttt{Uno} as a globalization strategy and is available at \url{https://github.com/david0oo/Uno/tree/funnel_method}.
We compare four different algorithmic configurations on a subset of 278 small instances of the \texttt{CUTEst} test set~\cite{gould2014}. All four versions employ the restoration SQP method but differ in their globalization strategy (filter or funnel) and their globalization mechanism (line-search or trust-region method).

We have excluded unconstrained problems from the test set because filter and funnel methods behave identically on these problems (the switching condition \eqref{eq:switching_condition} is always satisfied, and the same Armijo condition is enforced). 
In our numerical experiments we compare the funnel method with the default filter methods of \texttt{Uno}: we start from a bird's-eye position and move to a successively more detailed analysis. We close this section with some test results on the Maratos effect.

\subsection{Implementation Details}


The algorithmic parameters of the funnel method and the filter method are chosen as in Table \ref{tab:implementation_parameters}. Both methods are initialized with the same parameters to guarantee that they start in similar conditions.
Following the discussion of the funnel update strategy in Section \ref{sec:funnel_mechanism}, we found that  balancing the update of the funnel width  is important; in other words, the funnel reduction should be neither too strict nor too slow. Therefore, we picked $\kappa = 0.5$, which worked well for our simulations.
The filter has a maximum capacity of $50$ entries. For both methods, the initial upper bound on the constraint violation is set according to Equation~\eqref{eq:funnel_initialization}.
The maximum number of outer iterations is set to $4,000$.

\begin{table}[htbp!]
\centering
\caption{Parameter values of the funnel method and the filter method.}
\begin{tabular}{|l|cl|cl|cl|}
\hline 
& Parameter & Value & Parameter & Value & Parameter & Value\\
\hline
\multirow{ 2}{*}{Funnel method} & $\bar{\tau}$ & $100$ & $\bar{\kappa}$ & $1.25$ & $\kappa$ & $0.5$ \\
& $\delta$ & $0.999$ & $\sigma$ & $10^{-4}$ & $\beta$ & $0.99$\\
\hline
Filter method & $\bar{\tau}$ & 100 & $\bar{\kappa}$ & 1.25 & $\beta$ & 0.999 \\
\hline
\end{tabular}
\label{tab:implementation_parameters}
\end{table}


The QP solver available in \texttt{Uno} is \texttt{bqpd}~\cite{fletcher2000, fletcher1995}, a reliable null-space method for indefinite quadratic optimization.
The trust-region method uses the exact Hessian matrix.
A sufficient condition for the well-posedness of the line-search subproblems is ensured by iteratively adding a multiple of the identity to $W^{(k)}$ until the resulting matrix is positive definite~\cite[p.~51]{Nocedal2006}. The inertia of the matrix is computed by \texttt{MA57}~\cite{HSL}.

\texttt{Uno} terminates at $(x^*, \constraintmultipliers^*, \boundmultipliers^*)$ with one of the following outcomes:
\begin{itemize}
\item {\bf KKT point found}, if the final iterate satisfies an approximate KKT condition:
\begin{equation*}
\|\nabla \mathcal{L}(x^*, 1, \constraintmultipliers^*, \boundmultipliers^*) \| \le \varepsilon,
\quad
\|c(x^*)\| \le \varepsilon,
\quad
\| x^* \odot \boundmultipliers^* \| \le \varepsilon,
\end{equation*}
where the tolerance $\varepsilon$ is set to $10^{-6}$ for our numerical results.
\item {\bf Infeasible stationary point found}, if an approximate KKT condition of the feasibility problem holds:
\begin{equation*}
\left\|
\begin{pmatrix}
\nabla \mathcal{L}(x^*, 0, \constraintmultipliers^*, \boundmultipliers^*) \\
e + \constraintmultipliers^* - \boundmultipliers_u^* \\
e - \constraintmultipliers^* - \boundmultipliers_v^*
\end{pmatrix} \right\| \le \varepsilon,
\quad
\|c(x^*)\| > \varepsilon,
\quad
\left\|
\begin{pmatrix}
x^* \odot \boundmultipliers^* \\
u^* \odot \boundmultipliers_u^* \\
v^* \odot \boundmultipliers_v^*
\end{pmatrix}
\right\| \le \varepsilon,
\end{equation*}
where $u, v \ge 0$ are the elastic variables introduced in \ref{eq:FQP} and $\boundmultipliers_u, \boundmultipliers_v \ge 0$ are the corresponding bound multipliers.
\item {\bf Maximum number of iterations}, if we reached the iteration bound.
\item {\bf Unbounded solution}, if $f(x^*) < -10^{20}$ for an $\varepsilon$-feasible point ($\| c(x^*) \| \le \varepsilon$).
\item {\bf Small feasible step}, if the trust-region radius $\TRradius \le 10^{-16}$ and $\|c(x^*)\| \le \varepsilon$. This indicates that the problem may be ill-conditioned or may not be differentiable.
\end{itemize}

We consider a problem to be solved correctly if both methods found a KKT point or converged to an infeasible stationary point. If one of the methods converged to an infeasible point while the other found a KKT point, the first method failed to solve the problem. Other reasons for failure include the step size becoming too small or an excess of maximum number of iterations.
The simulations were carried out on an Intel Core i7-10810U CPU, and the corresponding log files are available at
\url{https://github.com/david0oo/uno_funnel_results}.

\subsection{Comparison between Funnel and Filter}


Detailed results of this comparison are tabulated in Appendices \ref{App:TR} and \ref{App:LS}.
Figure~\ref{fig:comparison_all_solvers_on_all_problems} shows a comparison of the four algorithmic configurations as a Dolan--Mor\'e performance profile~\cite{Dolan2001} for the number of  constraint evaluations.
A point $(\alpha, \beta)$ on the graph means that a method solves the fraction $\beta$ of all test problems within $\alpha$ times the number of evaluations of the virtual best solver---the solver that performs best for every instance~\cite{gould2015}. The higher and the more to the left, the better.
The performance profiles demonstrate that the funnel method slightly outperforms the filter method in terms of Hessian and constraint evaluations.
For these two metrics, line-search methods perform much worse than trust-region methods. The cause seems to stem from the convexification procedure: the trust-region methods solve indefinite QPs and exploit the negative curvature of the problems. If the trust-region QPs are convexified as are the line-search QPs, the performance similarly degrades.


\begin{figure}[htbp!]
\centering
\includegraphics[width=0.5\textwidth]{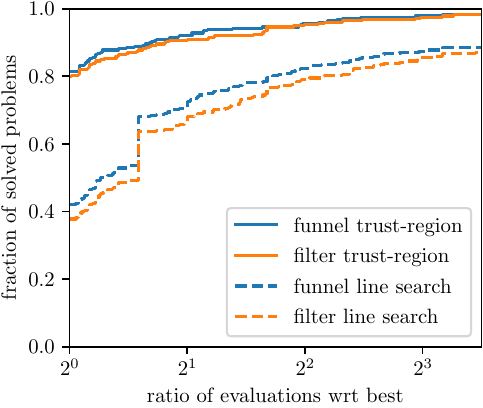}
\caption{Performance profiles for all four algorithmic configurations with respect to constraint evaluations.}
\label{fig:comparison_all_solvers_on_all_problems}
\end{figure}

Figure~\ref{fig:distribution} presents a distribution plot of the filter and funnel methods with trust region (left) and line search (right) on the \texttt{CUTEst} instances, using the number of constraint evaluations as the metric. Each point corresponds to one problem instance. Points on the diagonal correspond to instances where the funnel method and the filter method took the same number of iterations, points above the diagonal correspond to instances where the funnel method was faster than the filter method, and points below the diagonal correspond to instances where the filter method was faster. We observe that on many instances, the two methods behave almost identically. For the line-search methods, we observe that the funnel method has more wins (points above the diagonal) than the filter method, which is also observed in the performance profiles.

\begin{figure}[htbp!]
\centering
\def\scaling{1.4}
\begin{subfigure}{.49\linewidth}
\centering
\includegraphics[width=1\textwidth]{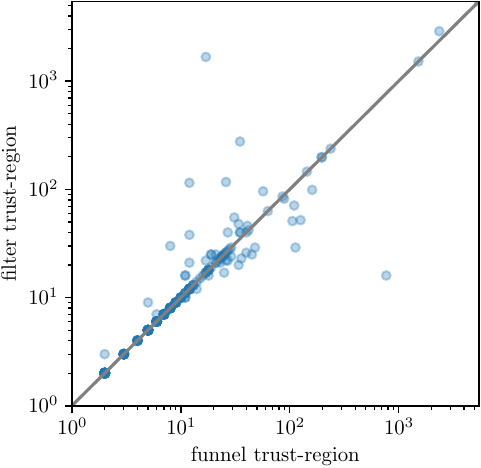}
\caption{Trust-region methods.}
\end{subfigure}
\begin{subfigure}{.49\linewidth}
\centering
\includegraphics[width=1\textwidth]{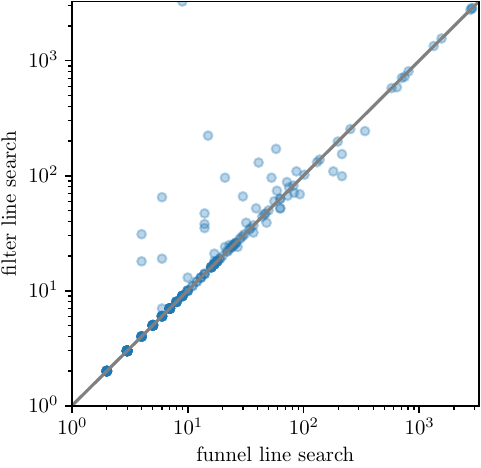}
\caption{Line-search methods.}
\end{subfigure}
\caption{Distribution plot of constraint evaluations for funnel vs filter globalization strategies. }
\label{fig:distribution}
\end{figure}

Next, we compared the number of iterations and the number of evaluations of all functions and derivatives. This more detailed analysis reveals that both trust-region methods take different iteration paths for only 49 instances. The results of the line-search methods also  differ for only 49 of the problems. 
In Figures~\ref{fig:f_and_h_and_r_steps_trust_region} and \ref{fig:f_and_h_and_r_steps_line_search} we plot for each of these instances a vertical bar that shows the number of $f$-type (blue), $h$-type (orange), and restoration steps (green) for the trust-region and line-search methods, respectively. A missing bar means that the algorithm failed to solve the problem. Overall, the ratio of the three iteration types seems to be similar, except for a few outliers.
We  observe that fewer problems can be solved by line-search methods than by trust-region methods, as is also shown in the performance profiles.

\begin{figure}[h!]
\includegraphics[width=1\textwidth]{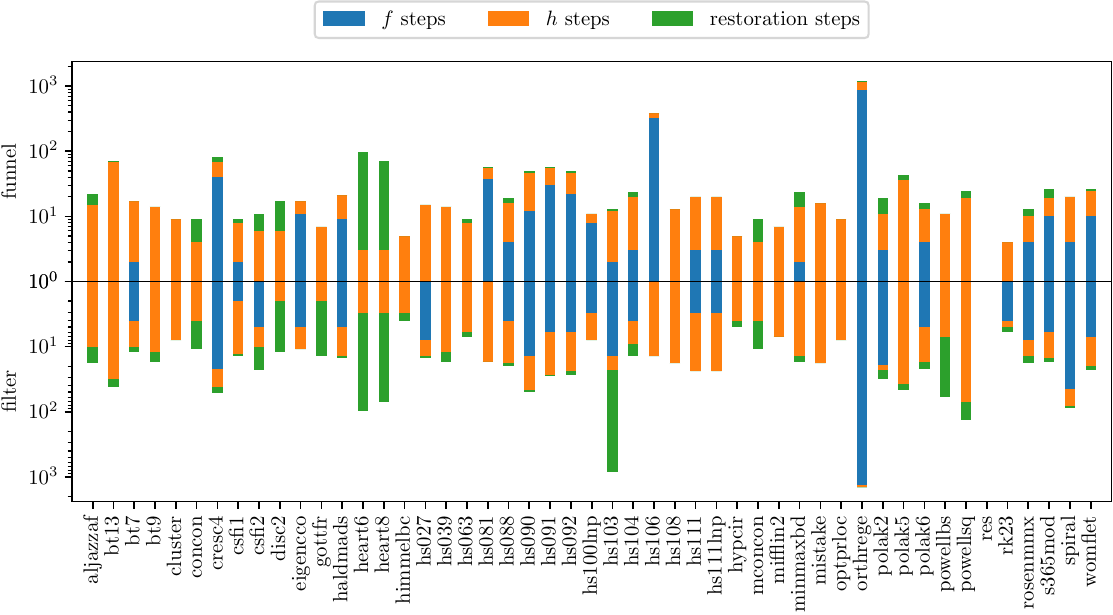}
\caption{Comparison of the number of $f$-type (blue), $h$-type (orange), and restoration steps (green) for the trust-region funnel method and the trust-region filter method.
}
\label{fig:f_and_h_and_r_steps_trust_region}
\end{figure}

\begin{figure}[h!]
\includegraphics[width=1\textwidth]{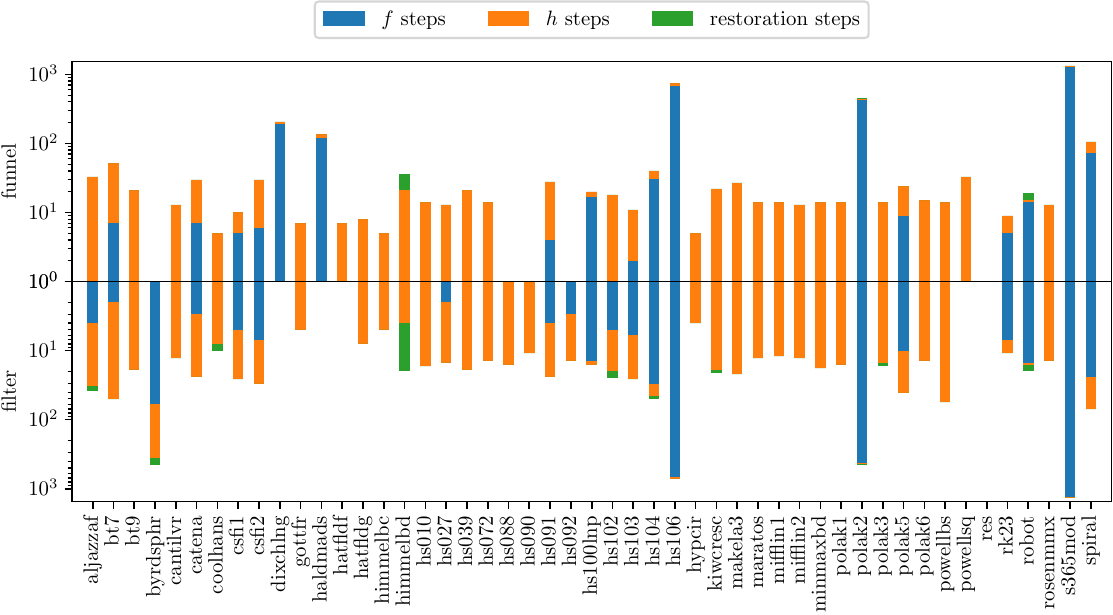}
\caption{Comparison of the number of $f$-type (blue), $h$-type (orange), and restoration steps (green) for the line-search funnel method and the line-search filter method.
}
\label{fig:f_and_h_and_r_steps_line_search}
\end{figure}

\subsection{Insights from Illustrative Examples}

We finish our discussion of the numerical results by zooming in to two illustrative examples that highlight the similarities and differences between filter and funnel methods.

\subsubsection{\texttt{powellbs} Example}

The \texttt{powellbs} instance for the trust-region methods is given by
\begin{align*}
\min_{x \in \R^2} \quad & 0 \\
\mathrm{s.t.} \quad & -1 + 10000 x_1 x_2 = 0, \\
                    & -1.0001 + e^{-x_1} + e^{-x_2} = 0,
\end{align*}
with initial point $x^{(0)} = (0, 1)^T$. The optimal solution is approximately $(\num{1.1e-05}, 9.1)^T$.
Figure~\ref{fig:contraction_infeasibility_powellbs} shows the infeasibility and the funnel width (in log scale) as functions of the outer iterations for the trust-region funnel and filter methods. The funnel width decreases linearly, because only $h$-type steps are taken. Interestingly, both methods are identical until iteration 5. From iteration 6 onwards, the funnel allows more non-monotonicity, and the method enters the regime of quadratic convergence, while the filter blocks progress.  This causes infeasibility of the QP and triggers the switch to feasibility restoration. Many iterations are required to find a filter-acceptable point, after which the convergence is quadratic. 

\begin{figure}[h!]
\centering
\includegraphics[width=0.7\textwidth]{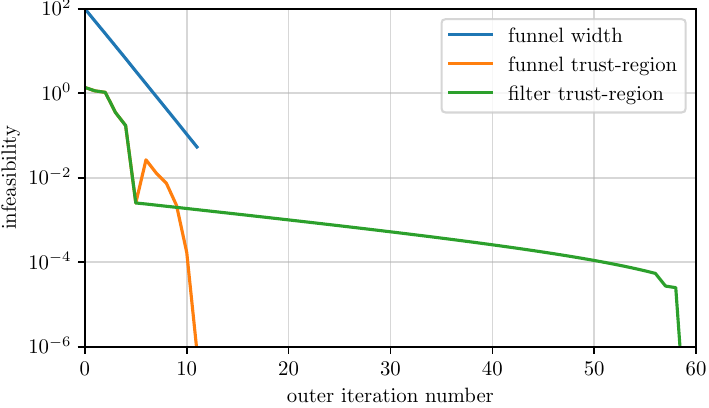}
\caption{Infeasibility and funnel width as functions of the outer iterations for trust-region funnel and filter algorithms on the  \texttt{powellbs} problem.
}
\label{fig:contraction_infeasibility_powellbs}
\end{figure}

\subsubsection{\texttt{maratos} Example}

Filter (or funnel) methods do not fail for the standard \texttt{maratos} and can be shown to take unit steps arbitarily close to a feasible point. However, in ~\cite{fletcher2006} another example was introduced proving that, in general, filter methods (including \texttt{filterSQP}) do suffer from the Maratos effect:
\begin{align*}
\min_{x \in \R^2} & \quad 2\left(x_1^2+x_2^2-1\right)-x_1 \\
\mathrm{s.t.} &\quad x_1^2+x_1^2-1=0 .
\end{align*}
If the starting point is chosen as $x = (\cos(t), \sin(t))$ for $t > 0$ small and $\lambda = \frac{3}{2}$ ($-\frac{3}{2}$ with our notation), the quadratic model of the objective predicts a decrease although the objective increases, which violates the Armijo condition. Thus, the full step is rejected.

The iterations of the trust-region funnel SQP and line-search funnel SQP methods starting from $(x_1, x_2, \constraintmultipliers) = (\frac{\sqrt{2}}{2}, \frac{\sqrt{2}}{2}, -\frac{3}{2})$ are given in Tables~\ref{tab:maratos_tr_funnel_sqp} and \ref{tab:maratos_ls_funnel_sqp}, respectively.
We see in both cases that in the first iteration, the full step increases the objective and the constraint violation, which yields a step rejection.
This demonstrates that the funnel method also suffers from the Maratos effect. Therefore, appropriate measures such as second-order corrections, a watchdog strategy, or non-monotone techniques are required for fast local convergence.

\begin{table}[h!]
\centering
\renewcommand{\arraystretch}{0.8} 
\small
\caption{Iterations of trust-region funnel SQP method on \texttt{maratos} example.}
\begin{tabular}{|c|c|c|c|c|c|c|c|c|}
\hline
$k$ & $l$ & $\TRradius^{(k,l)}$ & $\tau^{(k)}$ & $\| d^{(k, l)} \|_\infty$ & $f^{(k)}$ & $h^{(k)}$ & $\| \nabla \mathcal{L} \|$ & status \\
\hline
0 & -- & $1.00e+01$  & $1.00e+02$ & -- & $-0.707$ & $5.28e-10$ & $7.65e-01$ & \texttt{initial point} \\
1 & 1 & $1.00e+01$  & $1.00e+02$ & $5.00e-01$ & $-0.207$ & $5.00e-01$ & -- & \texttt{rejected (Armijo)} \\
-- & 2 & $2.50e-01$  & $1.00e+02$ & $2.50e-01$ & $-0.707$ & $1.25e-01$ & -- & \texttt{rejected (Armijo)} \\
-- & 3 & $1.25e-01$  & $1.00e+02$ & $1.25e-01$ & $-0.770$ & $3.13e-02$ & $8.59e-01$ & \texttt{$f$-type step} \\
2 & 1 & $2.50e-01$  & $1.00e+02$ & $2.50e-01$ & $-0.814$ & $8.69e-02$ & $4.18e-01$ & \texttt{$f$-type step} \\
3 & 1 & $5.00e-01$  & $1.00e+02$ & $2.71e-01$ & $-0.883$ & $7.60e-02$ & $5.86e-02$ & \texttt{$f$-type step} \\
4 & 1 & $5.00e-01$  & $1.00e+02$ & $6.30e-02$ & $-0.992$ & $5.06e-03$ & $2.86e-03$ & \texttt{$f$-type step} \\
5 & 1 & $5.00e-01$ & $1.00e+02$ & $2.55e-03$ & $-1.000$ & $1.28e-05$ & $1.37e-05$ & \texttt{$f$-type step} \\
6 & 1 & $5.00e-01$  & $1.00e+02$ & $9.77e-06$ & $-1.000$ & $1.37e-10$ & $1.88e-10$ & \texttt{$\varepsilon$-optimal} \\
\hline
\end{tabular}
\label{tab:maratos_tr_funnel_sqp}
\end{table}

\begin{table}[h!]
\caption{Iterations of line-search funnel SQP method on \texttt{maratos} example.}
\centering
\renewcommand{\arraystretch}{0.8} 
\small
\begin{tabular}{|c|c|c|c|c|c|c|c|c|c|}
\hline
$k$ & $l$ & $\alpha^{(l)}$ & regulariz. & $\tau^{(k)}$ & $\| d^{(k, l)} \|_\infty$ & $f^{(k)}$ & $h^{(k)}$ & $\| \nabla \mathcal{L} \|$ & status \\
\hline
0 & -- & -- & -- & $1.00e+02$ & -- & -0.707 & $5.28e-10$ & $7.65e-01$ & \texttt{initial point} \\
1 & 1 & 1 & $1.00e-04$ & $1.00e+02$ & $5.00e-01$ & -0.207 & $5.00e-01$ & -- & \texttt{rejected (Armijo)} \\
-- & 2 & 0.5 & - & $1.00e+02$ & $2.50e-01$ & -0.707 & $1.25e-01$ &  $4.31e-01$ & \texttt{$f$-type step} \\
2 & 1 & 1 & $1.00e-04$ & $1.00e+02$ & $4.81e-01$ & -0.605 & $2.58e-01$ & -- & \texttt{rejected (Armijo)} \\
-- & 2 & 0.5 & -- & $1.00e+02$ & $2.40e-01$ & -0.785 & $1.27e-01$ & $2.10e-01$ & \texttt{$f$-type step} \\
3 & 1 & 1 & $1.00e-04$ & $1.00e+02$ & $2.40e-01$ & -0.913 & $5.79e-02$ & $2.30e-02$ & \texttt{$f$-type step} \\
4 & 1 & 1 & $1.00e-04$ & $1.00e+02$ & $2.76e-02$ & -0.998 & $1.35e-03$ &  $9.91e-04$ & \texttt{$f$-type step} \\
5 & 1 & 1 & $1.00e-04$ & $1.00e+02$ & $6.74e-04$ & -1.000 & $8.86e-07$  & $1.24e-06$ & \texttt{$f$-type step} \\
6 & 1 & 1 & $1.00e-04$ & $1.00e+02$ & $8.65e-07$ & -1.000 & $9.44e-13$ &  $9.59e-11$ & \texttt{$\varepsilon$-optimal} \\
\hline
\end{tabular}
\label{tab:maratos_ls_funnel_sqp}
\end{table}



\section{Conclusion}
\label{sec:conclusion}

We consider a generic double-loop framework for solving nonlinearly constrained optimization problems that has been implemented in the open-source solver \texttt{Uno}. To illustrate the framework, we studied several variants of a restoration SQP method for nonlinearly constrained optimization problems, with an emphasis on a new globalization strategy called the funnel method. The method was presented through the lens of filter methods, whose theory served as a starting point for a theoretical analysis. We derived the global convergence for the trust-region funnel SQP method and outlined the main differences and similarities in the different proofs. An implementation of the funnel strategy in the \texttt{Uno} solver proved to slightly outperform its filter counterpart with respect to  constraint evaluations for a subset of \texttt{CUTEst} instances, while being also easier to implement.

\bibliographystyle{plain}
\bibliography{bibs/NLP}

\begin{thebibliography}{10}

\bibitem{antil2018frontiers}
H.~Antil, D.~P. Kouri, M.-D. Lacasse, and D.~Ridzal.
\newblock {\em Frontiers in {PDE}-constrained Optimization}, volume 163.
\newblock Springer, 2018.

\bibitem{bendsoe2013topology}
M.~P. Bendsoe and O.~Sigmund.
\newblock {\em Topology optimization: theory, methods, and applications}.
\newblock Springer Science \& Business Media, 2013.

\bibitem{betts2010practical}
J.~T. Betts.
\newblock {\em Practical methods for optimal control and estimation using
  nonlinear programming}.
\newblock SIAM, 2010.

\bibitem{biegler2007real}
L.~T. Biegler, O.~Ghattas, M.~Heinkenschloss, D.~Keyes, and B.~van
  Bloemen~Waanders.
\newblock {\em Real-time {PDE}-constrained Optimization}.
\newblock SIAM, 2007.

\bibitem{byrd2008steering}
R.~H. Byrd, J.~Nocedal, and R.~A. Waltz.
\newblock Steering exact penalty methods for nonlinear programming.
\newblock {\em Optimization Methods and Software}, 23(2):197--213, 2008.

\bibitem{curtis2017}
F.~E. Curtis, N.~Gould, D.~Robinson, and P.~Toint.
\newblock An interior-point trust-funnel algorithm for nonlinear optimization.
\newblock {\em Math. Program.}, 161(1–2):73--–134, jan 2017.

\bibitem{Dolan2001}
E.~Dolan and J.~Moré.
\newblock Benchmarking optimization software with performance profiles.
\newblock {\em Mathematical Programming, Series B}, 91, 03 2001.

\bibitem{fletcher1995}
R.~Fletcher.
\newblock An optimal positive definite update for sparse {H}essian matrices.
\newblock {\em SIAM Journal on Optimization}, 5(1):192--218, 1995.

\bibitem{fletcher2000}
R.~Fletcher.
\newblock Stable reduced {H}essian updates for indefinite quadratic
  programming.
\newblock {\em Mathematical Programming, Series B}, 87:251--264, 04 2000.

\bibitem{fletcher2002b}
R.~Fletcher, N.~I.~M. Gould, S.~Leyffer, P.~Toint, and A.~W\"{a}chter.
\newblock Global convergence of a trust-region {SQP-}filter algorithm for
  general nonlinear programming.
\newblock {\em SIAM Journal on Optimization}, 13(3):635--659, 2002.

\bibitem{fletcher1997}
R.~Fletcher and S.~Leyffer.
\newblock Nonlinear programming without a penalty function.
\newblock {\em Mathematical Programming}, 91:239--269, 02 1999.

\bibitem{fletcher2004solving}
R.~Fletcher* and S.~Leyffer.
\newblock Solving mathematical programs with complementarity constraints as
  nonlinear programs.
\newblock {\em Optimization Methods and Software}, 19(1):15--40, 2004.

\bibitem{fletcher2006local}
R.~Fletcher, S.~Leyffer, D.~Ralph, and S.~Scholtes.
\newblock Local convergence of [sqp] methods for mathematical programs with
  equilibrium constraints.
\newblock {\em SIAM Journal on Optimization}, 17(1):259--286, 2006.

\bibitem{fletcher2002a}
R.~Fletcher, S.~Leyffer, and P.~Toint.
\newblock On the global convergence of a filter--{SQP} algorithm.
\newblock {\em SIAM Journal on Optimization}, 13(1):44--59, 2002.

\bibitem{fletcher2006}
R.~Fletcher, S.~Leyffer, and P.~Toint.
\newblock A brief history of filter methods.
\newblock {\em SIAG/OPT Views-and-News}, 18(1):2--12, 10 2006.

\bibitem{gould2015}
N.~Gould, Y.~Loh, and D.~Robinson.
\newblock A nonmonotone filter {SQP} method: Local convergence and numerical
  results.
\newblock {\em SIAM Journal on Optimization}, 25(3):1885--1911, 2015.

\bibitem{gould2014}
N.~Gould, D.~Orban, and P.~Toint.
\newblock {CUTE}st: A constrained and unconstrained testing environment with
  safe threads for mathematical optimization.
\newblock {\em Computational Optimization and Applications}, 60:545--557, 07
  2014.

\bibitem{gould2011}
N.~Gould, D.~Robinson, and P.~Toint.
\newblock Corrigendum: Nonlinear programming without a penalty function or a
  filter.
\newblock {\em Technical Report naXys-07-2011, Namur Centre for Complex Systems
  (naXys), FUNDP-University of Namur, Namur, Belgium}, 2011.

\bibitem{gould2010}
N.~Gould and P.~Toint.
\newblock Nonlinear programming without a penalty function or a filter.
\newblock {\em Mathematical Programming}, 122:155--196, 03 2010.

\bibitem{herzog2010algorithms}
R.~Herzog and K.~Kunisch.
\newblock Algorithms for {PDE}-constrained optimization.
\newblock {\em GAMM-Mitteilungen}, 33(2):163--176, 2010.

\bibitem{HSL}
{HSL}.
\newblock {A} collection of {F}ortran codes for large scale scientific
  computation., 2011.

\bibitem{lee2011mixed}
J.~Lee and S.~Leyffer.
\newblock {\em Mixed integer nonlinear programming}, volume 154.
\newblock Springer Science \& Business Media, 2011.

\bibitem{leyffer2020survey}
S.~Leyffer, M.~Menickelly, T.~Munson, C.~Vanaret, and S.~M. Wild.
\newblock A survey of nonlinear robust optimization.
\newblock {\em INFOR: Information Systems and Operational Research},
  58(2):342--373, 2020.

\bibitem{Nocedal2006}
J.~Nocedal and S.~J. Wright.
\newblock {\em Numerical Optimization}.
\newblock Springer Series in Operations Research and Financial Engineering.
  Springer, 2 edition, 2006.

\bibitem{outrata1998nonsmooth}
J.~Outrata, M.~Kocvara, and J.~Zowe.
\newblock {\em Nonsmooth approach to optimization problems with equilibrium
  constraints: theory, applications and numerical results}, volume~28.
\newblock Springer Science \& Business Media, 1998.

\bibitem{Samadi2018}
M.~Samadi.
\newblock {\em Efficient Trust Region Methods for Nonconvex Optimization}.
\newblock PhD thesis, Lehigh University, 2018.

\bibitem{troltzsch2010optimal}
F.~Tr{\"o}ltzsch.
\newblock {\em Optimal control of partial differential equations: theory,
  methods, and applications}, volume 112.
\newblock American Mathematical Soc., 2010.

\bibitem{vanaret2024}
C.~Vanaret.
\newblock {Uno (Unifying Nonconvex Optimization)}.
\newblock Available at \url{https://github.com/cvanaret/Uno} under the MIT
  license.

\bibitem{VanaretLeyffer2024}
C.~Vanaret and S.~Leyffer.
\newblock Unifying nonlinearly constrained nonconvex optimization.
\newblock Submitted to Mathematical Programming Computation, 2024.

\bibitem{waechter2005}
A.~W\"{a}chter and L.~Biegler.
\newblock Line search filter methods for nonlinear programming: Motivation and
  global convergence.
\newblock {\em SIAM Journal on Optimization}, 16(1):1--31, 2005.

\bibitem{waechter2006}
A.~W\"achter and L.~Biegler.
\newblock On the implementation of an interior-point filter line-search
  algorithm for large-scale nonlinear programming.
\newblock {\em Mathematical Programming}, 106(1):25--57, 2006.

\bibitem{Zoppke1995}
C.~Zoppke-Donaldson.
\newblock {\em A tolerance-tube approach to sequential quadratic programming
  with applications}.
\newblock PhD thesis, University of Dundee, 1995.

\end{thebibliography}

\vfill
{\em The submitted manuscript has been created by UChicago Argonne, LLC, Operator of Argonne National Laboratory ("Argonne”). Argonne, a U.S. Department of Energy Office of Science laboratory, is operated under Contract No. DE-AC02-06CH11357. The U.S. Government retains for itself, and others acting on its behalf, a paid-up nonexclusive, irrevocable worldwide license in said article to reproduce, prepare derivative works, distribute copies to the public, and perform publicly and display publicly, by or on behalf of the Government. The Department of Energy will provide public access to these results of federally sponsored research in accordance with the DOE Public Access Plan (\href{http://energy.gov/downloads/doe-public-access-plan}{\tt http://energy.gov/downloads/doe-public-access-plan}).

\newpage
\appendix

\section{Detailed Results on \texttt{CUTEst} problems: Number of Function and Derivative Evaluations for Trust-Region Algorithms}\label{App:TR}
\footnotesize
\renewcommand{\arraystretch}{1}
\setlength{\tabcolsep}{4pt}
\begin{longtable}{|c||c|ccccc||c|ccccc|}
\hline
 & \multicolumn{6}{c|}{trust-region funnel method}           & \multicolumn{6}{c|}{trust-region filter method} \\
\hline
instance & status & $f$ & $c$ & $\nabla f$ & $\nabla c$ & $\nabla^2 \mathcal{L}$ & status & $f$ & $c$ & $\nabla f$ & $\nabla c$ & $\nabla^2 \mathcal{L}$ \\
\hline
aircrfta & KKT & 4 & 4 & 4 & 4 & 3 & KKT & 4 & 4 & 4 & 4 & 3 \\
airport & KKT & 13 & 13 & 13 & 13 & 12 & KKT & 13 & 13 & 13 & 13 & 12 \\
aljazzaf & KKT & 17 & 26 & 23 & 23 & 23 & KKT & 12 & 22 & 19 & 19 & 19 \\
allinitc & KKT & 29 & 29 & 29 & 29 & 28 & KKT & 29 & 29 & 29 & 29 & 28 \\
alsotame & KKT & 5 & 5 & 5 & 5 & 4 & KKT & 5 & 5 & 5 & 5 & 4 \\
argauss & infeasible & 1 & 3 & 3 & 3 & 3 & infeasible & 1 & 3 & 3 & 3 & 3 \\
avgasa & KKT & 2 & 2 & 2 & 2 & 1 & KKT & 2 & 2 & 2 & 2 & 1 \\
avgasb & KKT & 2 & 2 & 2 & 2 & 1 & KKT & 2 & 2 & 2 & 2 & 1 \\
avion2 & KKT & 23 & 23 & 11 & 11 & 10 & KKT & 23 & 23 & 11 & 11 & 10 \\
batch & KKT & 9 & 9 & 9 & 9 & 8 & KKT & 9 & 9 & 9 & 9 & 8 \\
biggsc4 & KKT & 2 & 2 & 2 & 2 & 1 & KKT & 2 & 2 & 2 & 2 & 1 \\
booth & KKT & 2 & 2 & 2 & 2 & 1 & KKT & 2 & 2 & 2 & 2 & 1 \\
bt1 & infeasible & 2 & 2 & 2 & 2 & 2 & infeasible & 2 & 2 & 2 & 2 & 2 \\
bt10 & KKT & 7 & 7 & 7 & 7 & 6 & KKT & 7 & 7 & 7 & 7 & 6 \\
bt11 & KKT & 7 & 7 & 7 & 7 & 6 & KKT & 7 & 7 & 7 & 7 & 6 \\
bt12 & KKT & 5 & 5 & 5 & 5 & 4 & KKT & 5 & 5 & 5 & 5 & 4 \\
bt13 & KKT & 105 & 110 & 73 & 73 & 75 & KKT & 58 & 71 & 42 & 42 & 46 \\
bt2 & KKT & 13 & 13 & 13 & 13 & 12 & KKT & 13 & 13 & 13 & 13 & 12 \\
bt3 & KKT & 2 & 2 & 2 & 2 & 1 & KKT & 2 & 2 & 2 & 2 & 1 \\
bt4 & KKT & 8 & 8 & 7 & 7 & 6 & KKT & 8 & 8 & 7 & 7 & 6 \\
bt5 & KKT & 9 & 9 & 8 & 8 & 7 & KKT & 9 & 9 & 8 & 8 & 7 \\
bt6 & KKT & 11 & 11 & 10 & 10 & 9 & KKT & 11 & 11 & 10 & 10 & 9 \\
bt7 & KKT & 23 & 23 & 18 & 18 & 17 & KKT & 17 & 21 & 13 & 13 & 14 \\
bt8 & KKT & 12 & 12 & 12 & 12 & 11 & KKT & 12 & 12 & 12 & 12 & 11 \\
bt9 & KKT & 19 & 19 & 15 & 15 & 14 & KKT & 19 & 25 & 18 & 18 & 18 \\
byrdsphr & KKT & 8 & 23 & 14 & 14 & 14 & KKT & 8 & 23 & 14 & 14 & 14 \\
cantilvr & KKT & 13 & 13 & 11 & 11 & 10 & KKT & 13 & 13 & 11 & 11 & 10 \\
catena & KKT & 12 & 12 & 11 & 11 & 10 & KKT & 12 & 12 & 11 & 11 & 10 \\
cb2 & KKT & 7 & 7 & 7 & 7 & 6 & KKT & 7 & 7 & 7 & 7 & 6 \\
cb3 & KKT & 7 & 7 & 7 & 7 & 6 & KKT & 7 & 7 & 7 & 7 & 6 \\
chaconn1 & KKT & 5 & 5 & 5 & 5 & 4 & KKT & 5 & 5 & 5 & 5 & 4 \\
chaconn2 & KKT & 5 & 5 & 5 & 5 & 4 & KKT & 5 & 5 & 5 & 5 & 4 \\
cluster & KKT & 10 & 10 & 10 & 10 & 9 & KKT & 10 & 10 & 9 & 9 & 8 \\
concon & KKT & 5 & 11 & 10 & 10 & 10 & KKT & 7 & 16 & 12 & 12 & 16 \\
congigmz & KKT & 5 & 6 & 6 & 6 & 6 & KKT & 5 & 6 & 6 & 6 & 6 \\
coolhans & KKT & 3 & 3 & 3 & 3 & 2 & KKT & 3 & 3 & 3 & 3 & 2 \\
core1 & KKT & 6 & 238 & 123 & 123 & 123 & KKT & 6 & 238 & 123 & 123 & 123 \\
coshfun & unbounded & 86 & 86 & 75 & 75 & 74 & unbounded & 86 & 86 & 75 & 75 & 74 \\
cresc4 & KKT & 131 & 161 & 84 & 84 & 85 & KKT & 84 & 99 & 53 & 53 & 60 \\
csfi1 & KKT & 11 & 12 & 10 & 10 & 10 & KKT & 20 & 21 & 15 & 15 & 15 \\
csfi2 & KKT & 7 & 12 & 12 & 12 & 12 & KKT & 13 & 38 & 24 & 24 & 28 \\
dallass & KKT & 18 & 18 & 16 & 16 & 15 & KKT & 18 & 18 & 16 & 16 & 15 \\
deconvc & KKT & 24 & 24 & 15 & 15 & 14 & KKT & 24 & 24 & 15 & 15 & 14 \\
degenlpa & KKT & 2 & 2 & 2 & 2 & 1 & KKT & 2 & 2 & 2 & 2 & 1 \\
degenlpb & KKT & 2 & 2 & 2 & 2 & 1 & KKT & 2 & 2 & 2 & 2 & 1 \\
demymalo & KKT & 8 & 8 & 8 & 8 & 7 & KKT & 8 & 8 & 8 & 8 & 7 \\
dipigri & KKT & 12 & 12 & 9 & 9 & 8 & KKT & 12 & 12 & 9 & 9 & 8 \\
disc2 & infeasible & 9 & 25 & 18 & 18 & 18 & infeasible & 5 & 17 & 13 & 13 & 15 \\
discs & error & -- & -- & -- & -- & -- & error & -- & -- & -- & -- & -- \\
dixchlng & KKT & 10 & 10 & 10 & 10 & 9 & KKT & 10 & 10 & 10 & 10 & 9 \\
dnieper & KKT & 4 & 4 & 4 & 4 & 3 & KKT & 4 & 4 & 4 & 4 & 3 \\
dual1 & KKT & 2 & 2 & 2 & 2 & 1 & KKT & 2 & 2 & 2 & 2 & 1 \\
dual2 & KKT & 2 & 2 & 2 & 2 & 1 & KKT & 2 & 2 & 2 & 2 & 1 \\
dual4 & KKT & 2 & 2 & 2 & 2 & 1 & KKT & 2 & 2 & 2 & 2 & 1 \\
dualc1 & KKT & 2 & 2 & 2 & 2 & 1 & KKT & 2 & 2 & 2 & 2 & 1 \\
dualc2 & KKT & 2 & 2 & 2 & 2 & 1 & KKT & 2 & 2 & 2 & 2 & 1 \\
dualc5 & KKT & 2 & 2 & 2 & 2 & 1 & KKT & 2 & 2 & 2 & 2 & 1 \\
dualc8 & KKT & 2 & 2 & 2 & 2 & 1 & KKT & 2 & 2 & 2 & 2 & 1 \\
eigencco & KKT & 34 & 34 & 18 & 18 & 17 & KKT & 20 & 20 & 12 & 12 & 11 \\
eigmaxc & KKT & 5 & 6 & 6 & 6 & 6 & KKT & 5 & 6 & 6 & 6 & 6 \\
eigminc & KKT & 5 & 6 & 6 & 6 & 6 & KKT & 5 & 6 & 6 & 6 & 6 \\
expfita & KKT & 13 & 13 & 13 & 13 & 12 & KKT & 13 & 13 & 13 & 13 & 12 \\
extrasim & KKT & 2 & 2 & 2 & 2 & 1 & KKT & 2 & 2 & 2 & 2 & 1 \\
fccu & KKT & 4 & 4 & 4 & 4 & 3 & KKT & 4 & 4 & 4 & 4 & 3 \\
fletcher & infeasible & 2 & 2 & 2 & 2 & 2 & infeasible & 2 & 2 & 2 & 2 & 2 \\
genhs28 & KKT & 2 & 2 & 2 & 2 & 1 & KKT & 2 & 2 & 2 & 2 & 1 \\
gigomez1 & KKT & 8 & 8 & 8 & 8 & 7 & KKT & 8 & 8 & 8 & 8 & 7 \\
goffin & KKT & 3 & 3 & 3 & 3 & 2 & KKT & 3 & 3 & 3 & 3 & 2 \\
gottfr & KKT & 8 & 8 & 8 & 8 & 7 & KKT & 14 & 30 & 15 & 15 & 35 \\
gridnetg & KKT & 4 & 4 & 4 & 4 & 3 & KKT & 4 & 4 & 4 & 4 & 3 \\
gridneth & KKT & 5 & 5 & 5 & 5 & 4 & KKT & 5 & 5 & 5 & 5 & 4 \\
gridneti & KKT & 5 & 5 & 5 & 5 & 4 & KKT & 5 & 5 & 5 & 5 & 4 \\
haifas & KKT & 12 & 12 & 9 & 9 & 8 & KKT & 12 & 12 & 9 & 9 & 8 \\
haldmads & KKT & 40 & 40 & 22 & 22 & 21 & KKT & 25 & 26 & 16 & 16 & 16 \\
hatfldf & KKT & 8 & 28 & 14 & 14 & 22 & KKT & 8 & 28 & 14 & 14 & 22 \\
hatfldg & KKT & 5 & 18 & 10 & 10 & 10 & KKT & 5 & 18 & 10 & 10 & 10 \\
hatfldh & KKT & 2 & 2 & 2 & 2 & 1 & KKT & 2 & 2 & 2 & 2 & 1 \\
heart6 & KKT & 4 & 197 & 99 & 99 & 99 & KKT & 5 & 199 & 100 & 100 & 102 \\
heart8 & KKT & 4 & 144 & 72 & 72 & 72 & KKT & 5 & 146 & 73 & 73 & 75 \\
himmelba & KKT & 2 & 2 & 2 & 2 & 1 & KKT & 2 & 2 & 2 & 2 & 1 \\
himmelbc & KKT & 6 & 6 & 6 & 6 & 5 & KKT & 5 & 6 & 5 & 5 & 5 \\
himmelbd & infeasible & 4 & 10 & 7 & 7 & 7 & infeasible & 4 & 10 & 7 & 7 & 7 \\
himmelbe & KKT & 2 & 2 & 2 & 2 & 1 & KKT & 2 & 2 & 2 & 2 & 1 \\
himmelbk & KKT & 6 & 6 & 6 & 6 & 5 & KKT & 6 & 6 & 6 & 6 & 5 \\
himmelp2 & KKT & 9 & 9 & 9 & 9 & 8 & KKT & 9 & 9 & 9 & 9 & 8 \\
himmelp3 & KKT & 5 & 5 & 5 & 5 & 4 & KKT & 5 & 5 & 5 & 5 & 4 \\
himmelp4 & KKT & 5 & 5 & 5 & 5 & 4 & KKT & 5 & 5 & 5 & 5 & 4 \\
himmelp5 & KKT & 12 & 12 & 10 & 10 & 9 & KKT & 12 & 12 & 10 & 10 & 9 \\
himmelp6 & KKT & 2 & 2 & 2 & 2 & 1 & KKT & 2 & 2 & 2 & 2 & 1 \\
hong & KKT & 5 & 5 & 5 & 5 & 4 & KKT & 5 & 5 & 5 & 5 & 4 \\
hs006 & KKT & 3 & 3 & 3 & 3 & 2 & KKT & 3 & 3 & 3 & 3 & 2 \\
hs007 & KKT & 12 & 12 & 9 & 9 & 8 & KKT & 12 & 12 & 9 & 9 & 8 \\
hs008 & KKT & 6 & 6 & 6 & 6 & 5 & KKT & 6 & 6 & 6 & 6 & 5 \\
hs009 & KKT & 5 & 5 & 4 & 4 & 3 & KKT & 5 & 5 & 4 & 4 & 3 \\
hs010 & KKT & 10 & 10 & 10 & 10 & 9 & KKT & 10 & 10 & 10 & 10 & 9 \\
hs011 & KKT & 6 & 6 & 6 & 6 & 5 & KKT & 6 & 6 & 6 & 6 & 5 \\
hs012 & KKT & 8 & 8 & 6 & 6 & 5 & KKT & 8 & 8 & 6 & 6 & 5 \\
hs013 & KKT & 25 & 25 & 25 & 25 & 24 & KKT & 25 & 25 & 25 & 25 & 24 \\
hs014 & KKT & 6 & 6 & 6 & 6 & 5 & KKT & 6 & 6 & 6 & 6 & 5 \\
hs015 & KKT & 4 & 7 & 6 & 6 & 6 & KKT & 4 & 7 & 6 & 6 & 6 \\
hs016 & KKT & 5 & 5 & 5 & 5 & 4 & KKT & 5 & 5 & 5 & 5 & 4 \\
hs017 & KKT & 8 & 8 & 7 & 7 & 6 & KKT & 8 & 8 & 7 & 7 & 6 \\
hs018 & KKT & 7 & 7 & 7 & 7 & 6 & KKT & 7 & 7 & 7 & 7 & 6 \\
hs019 & KKT & 7 & 7 & 7 & 7 & 6 & KKT & 7 & 7 & 7 & 7 & 6 \\
hs020 & KKT & 5 & 5 & 5 & 5 & 4 & KKT & 5 & 5 & 5 & 5 & 4 \\
hs021 & KKT & 2 & 2 & 2 & 2 & 1 & KKT & 2 & 2 & 2 & 2 & 1 \\
hs022 & KKT & 2 & 2 & 2 & 2 & 1 & KKT & 2 & 2 & 2 & 2 & 1 \\
hs023 & KKT & 7 & 7 & 7 & 7 & 6 & KKT & 7 & 7 & 7 & 7 & 6 \\
hs024 & KKT & 3 & 3 & 3 & 3 & 2 & KKT & 3 & 3 & 3 & 3 & 2 \\
hs026 & KKT & 18 & 18 & 18 & 18 & 17 & KKT & 18 & 18 & 18 & 18 & 17 \\
hs027 & KKT & 17 & 17 & 16 & 16 & 15 & KKT & 21 & 22 & 16 & 16 & 16 \\
hs028 & KKT & 2 & 2 & 2 & 2 & 1 & KKT & 2 & 2 & 2 & 2 & 1 \\
hs029 & KKT & 8 & 8 & 7 & 7 & 6 & KKT & 8 & 8 & 7 & 7 & 6 \\
hs030 & KKT & 2 & 2 & 2 & 2 & 1 & KKT & 2 & 2 & 2 & 2 & 1 \\
hs031 & KKT & 6 & 6 & 6 & 6 & 5 & KKT & 6 & 6 & 6 & 6 & 5 \\
hs032 & KKT & 2 & 2 & 2 & 2 & 1 & KKT & 2 & 2 & 2 & 2 & 1 \\
hs033 & KKT & 5 & 5 & 5 & 5 & 4 & KKT & 5 & 5 & 5 & 5 & 4 \\
hs034 & KKT & 8 & 8 & 8 & 8 & 7 & KKT & 8 & 8 & 8 & 8 & 7 \\
hs035 & KKT & 2 & 2 & 2 & 2 & 1 & KKT & 2 & 2 & 2 & 2 & 1 \\
hs036 & KKT & 3 & 3 & 3 & 3 & 2 & KKT & 3 & 3 & 3 & 3 & 2 \\
hs037 & KKT & 6 & 6 & 6 & 6 & 5 & KKT & 6 & 6 & 6 & 6 & 5 \\
hs039 & KKT & 19 & 19 & 15 & 15 & 14 & KKT & 19 & 25 & 18 & 18 & 18 \\
hs040 & KKT & 5 & 5 & 5 & 5 & 4 & KKT & 5 & 5 & 5 & 5 & 4 \\
hs041 & KKT & 2 & 2 & 2 & 2 & 1 & KKT & 2 & 2 & 2 & 2 & 1 \\
hs042 & KKT & 6 & 6 & 6 & 6 & 5 & KKT & 6 & 6 & 6 & 6 & 5 \\
hs043 & KKT & 9 & 9 & 8 & 8 & 7 & KKT & 9 & 9 & 8 & 8 & 7 \\
hs044 & KKT & 2 & 2 & 2 & 2 & 1 & KKT & 2 & 2 & 2 & 2 & 1 \\
hs046 & KKT & 19 & 19 & 19 & 19 & 18 & KKT & 19 & 19 & 19 & 19 & 18 \\
hs047 & KKT & 21 & 21 & 18 & 18 & 17 & KKT & 21 & 21 & 18 & 18 & 17 \\
hs048 & KKT & 2 & 2 & 2 & 2 & 1 & KKT & 2 & 2 & 2 & 2 & 1 \\
hs049 & KKT & 17 & 17 & 17 & 17 & 16 & KKT & 17 & 17 & 17 & 17 & 16 \\
hs050 & KKT & 9 & 9 & 9 & 9 & 8 & KKT & 9 & 9 & 9 & 9 & 8 \\
hs051 & KKT & 2 & 2 & 2 & 2 & 1 & KKT & 2 & 2 & 2 & 2 & 1 \\
hs052 & KKT & 2 & 2 & 2 & 2 & 1 & KKT & 2 & 2 & 2 & 2 & 1 \\
hs053 & KKT & 2 & 2 & 2 & 2 & 1 & KKT & 2 & 2 & 2 & 2 & 1 \\
hs054 & KKT & 2 & 2 & 2 & 2 & 1 & KKT & 2 & 2 & 2 & 2 & 1 \\
hs055 & KKT & 2 & 2 & 2 & 2 & 1 & KKT & 2 & 2 & 2 & 2 & 1 \\
hs056 & KKT & 15 & 16 & 16 & 16 & 16 & KKT & 15 & 16 & 16 & 16 & 16 \\
hs057 & KKT & 5 & 5 & 5 & 5 & 4 & KKT & 5 & 5 & 5 & 5 & 4 \\
hs059 & KKT & 11 & 12 & 10 & 10 & 10 & KKT & 11 & 12 & 10 & 10 & 10 \\
hs060 & KKT & 7 & 7 & 7 & 7 & 6 & KKT & 7 & 7 & 7 & 7 & 6 \\
hs061 & infeasible & 1 & 2 & 2 & 2 & 2 & infeasible & 1 & 2 & 2 & 2 & 2 \\
hs062 & KKT & 8 & 8 & 7 & 7 & 6 & KKT & 8 & 8 & 7 & 7 & 6 \\
hs063 & KKT & 9 & 11 & 10 & 10 & 10 & KKT & 8 & 10 & 8 & 8 & 9 \\
hs064 & KKT & 12 & 17 & 17 & 17 & 17 & KKT & 12 & 17 & 17 & 17 & 17 \\
hs065 & KKT & 5 & 5 & 5 & 5 & 4 & KKT & 5 & 5 & 5 & 5 & 4 \\
hs066 & KKT & 9 & 9 & 9 & 9 & 8 & KKT & 9 & 9 & 9 & 9 & 8 \\
hs067 & KKT & 12 & 12 & 12 & 12 & 11 & KKT & 12 & 12 & 12 & 12 & 11 \\
hs070 & KKT & 40 & 40 & 28 & 28 & 27 & KKT & 40 & 40 & 28 & 28 & 27 \\
hs071 & KKT & 6 & 6 & 6 & 6 & 5 & KKT & 6 & 6 & 6 & 6 & 5 \\
hs072 & KKT & 15 & 18 & 16 & 16 & 16 & KKT & 15 & 18 & 16 & 16 & 16 \\
hs073 & KKT & 4 & 4 & 4 & 4 & 3 & KKT & 4 & 4 & 4 & 4 & 3 \\
hs074 & KKT & 6 & 26 & 19 & 19 & 19 & KKT & 6 & 26 & 19 & 19 & 19 \\
hs075 & KKT & 5 & 25 & 18 & 18 & 18 & KKT & 5 & 25 & 18 & 18 & 18 \\
hs076 & KKT & 2 & 2 & 2 & 2 & 1 & KKT & 2 & 2 & 2 & 2 & 1 \\
hs077 & KKT & 11 & 11 & 10 & 10 & 9 & KKT & 11 & 11 & 10 & 10 & 9 \\
hs078 & KKT & 5 & 5 & 5 & 5 & 4 & KKT & 5 & 5 & 5 & 5 & 4 \\
hs079 & KKT & 5 & 5 & 5 & 5 & 4 & KKT & 5 & 5 & 5 & 5 & 4 \\
hs080 & KKT & 8 & 8 & 8 & 8 & 7 & KKT & 8 & 8 & 8 & 8 & 7 \\
hs081 & KKT & 111 & 113 & 58 & 58 & 59 & KKT & 29 & 29 & 18 & 18 & 17 \\
hs083 & KKT & 5 & 5 & 5 & 5 & 4 & KKT & 5 & 5 & 5 & 5 & 4 \\
hs084 & KKT & 11 & 11 & 8 & 8 & 7 & KKT & 11 & 11 & 8 & 8 & 7 \\
hs085 & error & -- & -- & -- & -- & -- & error & -- & -- & -- & -- & -- \\
hs086 & KKT & 5 & 5 & 5 & 5 & 4 & KKT & 5 & 5 & 5 & 5 & 4 \\
hs087 & KKT & 7 & 10 & 10 & 10 & 10 & KKT & 7 & 10 & 10 & 10 & 10 \\
hs088 & KKT & 22 & 29 & 20 & 20 & 21 & KKT & 21 & 24 & 21 & 21 & 22 \\
hs089 & KKT & 23 & 24 & 17 & 17 & 17 & KKT & 23 & 24 & 17 & 17 & 17 \\
hs090 & KKT & 81 & 89 & 50 & 50 & 51 & KKT & 74 & 82 & 50 & 50 & 51 \\
hs091 & KKT & 125 & 126 & 59 & 59 & 59 & KKT & 51 & 52 & 29 & 29 & 29 \\
hs092 & KKT & 96 & 106 & 50 & 50 & 51 & KKT & 40 & 51 & 28 & 28 & 30 \\
hs093 & infeasible & 3 & 3 & 3 & 3 & 3 & infeasible & 3 & 3 & 3 & 3 & 3 \\
hs095 & KKT & 3 & 3 & 3 & 3 & 2 & KKT & 3 & 3 & 3 & 3 & 2 \\
hs096 & KKT & 3 & 3 & 3 & 3 & 2 & KKT & 3 & 3 & 3 & 3 & 2 \\
hs097 & KKT & 7 & 7 & 7 & 7 & 6 & KKT & 7 & 7 & 7 & 7 & 6 \\
hs098 & KKT & 7 & 7 & 7 & 7 & 6 & KKT & 7 & 7 & 7 & 7 & 6 \\
hs099 & small feasible & 14 & 63 & 41 & 41 & 41 & small feasible & 14 & 63 & 41 & 41 & 41 \\
hs100 & KKT & 12 & 12 & 9 & 9 & 8 & KKT & 12 & 12 & 9 & 9 & 8 \\
hs100lnp & KKT & 14 & 14 & 12 & 12 & 11 & KKT & 12 & 12 & 9 & 9 & 8 \\
hs100mod & KKT & 12 & 12 & 9 & 9 & 8 & KKT & 12 & 12 & 9 & 9 & 8 \\
hs101 & KKT & 22 & 27 & 18 & 18 & 18 & KKT & 22 & 27 & 18 & 18 & 18 \\
hs102 & KKT & 17 & 18 & 15 & 15 & 15 & KKT & 17 & 18 & 15 & 15 & 15 \\
hs103 & KKT & 16 & 17 & 14 & 14 & 14 & KKT & 37 & 1678 & 843 & 843 & 848 \\
hs104 & KKT & 31 & 36 & 25 & 25 & 25 & KKT & 17 & 23 & 15 & 15 & 15 \\
hs106 & KKT & 771 & 771 & 386 & 386 & 385 & KKT & 16 & 16 & 15 & 15 & 14 \\
hs107 & KKT & 6 & 6 & 6 & 6 & 5 & KKT & 6 & 6 & 6 & 6 & 5 \\
hs108 & KKT & 21 & 21 & 14 & 14 & 13 & KKT & 25 & 25 & 19 & 19 & 18 \\
hs109 & KKT & 6 & 24 & 18 & 18 & 18 & KKT & 6 & 24 & 18 & 18 & 18 \\
hs111 & KKT & 35 & 35 & 21 & 21 & 20 & KKT & 40 & 40 & 25 & 25 & 24 \\
hs111lnp & KKT & 35 & 35 & 21 & 21 & 20 & KKT & 40 & 40 & 25 & 25 & 24 \\
hs112 & KKT & 12 & 12 & 12 & 12 & 11 & KKT & 12 & 12 & 12 & 12 & 11 \\
hs113 & KKT & 6 & 6 & 6 & 6 & 5 & KKT & 6 & 6 & 6 & 6 & 5 \\
hs114 & infeasible & 1 & 197 & 102 & 102 & 102 & infeasible & 1 & 197 & 102 & 102 & 102 \\
hs116 & KKT & 12 & 12 & 12 & 12 & 11 & KKT & 12 & 12 & 12 & 12 & 11 \\
hs117 & KKT & 6 & 6 & 6 & 6 & 5 & KKT & 6 & 6 & 6 & 6 & 5 \\
hs118 & KKT & 3 & 3 & 3 & 3 & 2 & KKT & 3 & 3 & 3 & 3 & 2 \\
hs119 & KKT & 7 & 7 & 7 & 7 & 6 & KKT & 7 & 7 & 7 & 7 & 6 \\
hs21mod & KKT & 2 & 2 & 2 & 2 & 1 & KKT & 2 & 2 & 2 & 2 & 1 \\
hs268 & KKT & 2 & 2 & 2 & 2 & 1 & KKT & 2 & 2 & 2 & 2 & 1 \\
hs35mod & KKT & 2 & 2 & 2 & 2 & 1 & KKT & 2 & 2 & 2 & 2 & 1 \\
hs44new & KKT & 2 & 2 & 2 & 2 & 1 & KKT & 2 & 2 & 2 & 2 & 1 \\
hs99exp & KKT & 12 & 42 & 32 & 32 & 32 & KKT & 12 & 42 & 32 & 32 & 32 \\
hubfit & KKT & 2 & 2 & 2 & 2 & 1 & KKT & 2 & 2 & 2 & 2 & 1 \\
hypcir & KKT & 6 & 6 & 6 & 6 & 5 & KKT & 6 & 7 & 6 & 6 & 6 \\
kiwcresc & KKT & 11 & 11 & 9 & 9 & 8 & KKT & 11 & 11 & 9 & 9 & 8 \\
lakes & KKT & 12 & 5456 & 2737 & 2737 & 2737 & KKT & 12 & 5456 & 2737 & 2737 & 2737 \\
launch & error & -- & -- & -- & -- & -- & infeasible & 1 & 1523 & 735 & 735 & 735 \\
lewispol & infeasible & 1 & 4 & 4 & 4 & 4 & infeasible & 1 & 4 & 4 & 4 & 4 \\
linspanh & KKT & 2 & 2 & 2 & 2 & 1 & KKT & 2 & 2 & 2 & 2 & 1 \\
loadbal & KKT & 8 & 8 & 8 & 8 & 7 & KKT & 8 & 8 & 8 & 8 & 7 \\
lootsma & KKT & 5 & 5 & 5 & 5 & 4 & KKT & 5 & 5 & 5 & 5 & 4 \\
lotschd & KKT & 3 & 3 & 3 & 3 & 2 & KKT & 3 & 3 & 3 & 3 & 2 \\
lsnnodoc & KKT & 7 & 7 & 7 & 7 & 6 & KKT & 7 & 7 & 7 & 7 & 6 \\
lsqfit & KKT & 2 & 2 & 2 & 2 & 1 & KKT & 2 & 2 & 2 & 2 & 1 \\
madsen & KKT & 14 & 14 & 11 & 11 & 10 & KKT & 14 & 14 & 11 & 11 & 10 \\
makela1 & KKT & 12 & 12 & 10 & 10 & 9 & KKT & 12 & 12 & 10 & 10 & 9 \\
makela2 & KKT & 5 & 5 & 5 & 5 & 4 & KKT & 5 & 5 & 5 & 5 & 4 \\
makela3 & KKT & 22 & 22 & 20 & 20 & 19 & KKT & 22 & 22 & 20 & 20 & 19 \\
makela4 & KKT & 3 & 3 & 3 & 3 & 2 & KKT & 3 & 3 & 3 & 3 & 2 \\
maratos & KKT & 10 & 10 & 9 & 9 & 8 & KKT & 10 & 10 & 9 & 9 & 8 \\
matrix2 & KKT & 12 & 12 & 12 & 12 & 11 & KKT & 12 & 12 & 12 & 12 & 11 \\
mconcon & KKT & 5 & 11 & 10 & 10 & 10 & KKT & 7 & 16 & 12 & 12 & 16 \\
mifflin1 & KKT & 10 & 10 & 9 & 9 & 8 & KKT & 10 & 10 & 9 & 9 & 8 \\
mifflin2 & KKT & 11 & 11 & 8 & 8 & 7 & KKT & 10 & 10 & 8 & 8 & 7 \\
minmaxbd & KKT & 32 & 45 & 25 & 25 & 27 & KKT & 22 & 25 & 18 & 18 & 18 \\
minmaxrb & KKT & 3 & 3 & 3 & 3 & 2 & KKT & 3 & 3 & 3 & 3 & 2 \\
mistake & KKT & 27 & 27 & 17 & 17 & 16 & KKT & 22 & 22 & 19 & 19 & 18 \\
model & KKT & 2 & 2 & 2 & 2 & 1 & KKT & 2 & 2 & 2 & 2 & 1 \\
mwright & KKT & 9 & 9 & 7 & 7 & 6 & KKT & 9 & 9 & 7 & 7 & 6 \\
nuffield\footnote{full name: nuffield\_continuum, which this table is too narrow to contain.} & KKT & 5 & 5 & 5 & 5 & 4 & KKT & 5 & 5 & 5 & 5 & 4 \\
odfits & KKT & 7 & 7 & 7 & 7 & 6 & KKT & 7 & 7 & 7 & 7 & 6 \\
optcntrl & KKT & 4 & 4 & 4 & 4 & 3 & KKT & 4 & 4 & 4 & 4 & 3 \\
optmass & KKT & 9 & 9 & 7 & 7 & 6 & KKT & 9 & 9 & 7 & 7 & 6 \\
optprloc & KKT & 18 & 18 & 10 & 10 & 9 & KKT & 16 & 16 & 9 & 9 & 8 \\
orthregb & KKT & 2 & 2 & 2 & 2 & 1 & KKT & 2 & 2 & 2 & 2 & 1 \\
orthrege & KKT & 2334 & 2364 & 1183 & 1183 & 1185 & KKT & 2896 & 2898 & 1450 & 1450 & 1450 \\
pentagon & KKT & 8 & 8 & 6 & 6 & 5 & KKT & 8 & 8 & 6 & 6 & 5 \\
polak1 & KKT & 8 & 8 & 8 & 8 & 7 & KKT & 8 & 8 & 8 & 8 & 7 \\
polak2 & KKT & 22 & 31 & 20 & 20 & 23 & KKT & 46 & 55 & 32 & 32 & 35 \\
polak3 & KKT & 21 & 26 & 15 & 15 & 15 & KKT & 21 & 26 & 15 & 15 & 15 \\
polak4 & KKT & 5 & 5 & 5 & 5 & 4 & KKT & 5 & 5 & 5 & 5 & 4 \\
polak5 & KKT & 43 & 57 & 45 & 45 & 45 & KKT & 82 & 96 & 47 & 47 & 47 \\
polak6 & KKT & 31 & 34 & 17 & 17 & 18 & KKT & 43 & 48 & 23 & 23 & 25 \\
portfl1 & KKT & 2 & 2 & 2 & 2 & 1 & KKT & 2 & 2 & 2 & 2 & 1 \\
portfl2 & KKT & 2 & 2 & 2 & 2 & 1 & KKT & 2 & 2 & 2 & 2 & 1 \\
portfl3 & KKT & 2 & 2 & 2 & 2 & 1 & KKT & 2 & 2 & 2 & 2 & 1 \\
portfl4 & KKT & 2 & 2 & 2 & 2 & 1 & KKT & 2 & 2 & 2 & 2 & 1 \\
portfl6 & KKT & 2 & 2 & 2 & 2 & 1 & KKT & 2 & 2 & 2 & 2 & 1 \\
powellbs & KKT & 12 & 12 & 12 & 12 & 11 & KKT & 10 & 115 & 60 & 60 & 62 \\
powellsq & KKT & 21 & 35 & 26 & 26 & 26 & KKT & 80 & 277 & 135 & 135 & 216 \\
prodpl0 & KKT & 9 & 9 & 9 & 9 & 8 & KKT & 9 & 9 & 9 & 9 & 8 \\
prodpl1 & KKT & 7 & 7 & 7 & 7 & 6 & KKT & 7 & 7 & 7 & 7 & 6 \\
recipe & KKT & 2 & 2 & 2 & 2 & 1 & KKT & 2 & 2 & 2 & 2 & 1 \\
res & KKT & 2 & 2 & 2 & 2 & 1 & small feasible & 3 & 3 & 2 & 2 & 1 \\
rk23 & KKT & 5 & 5 & 5 & 5 & 4 & KKT & 7 & 9 & 7 & 7 & 7 \\
robot & KKT & 14 & 21 & 12 & 12 & 12 & KKT & 14 & 21 & 12 & 12 & 12 \\
rosenmmx & KKT & 24 & 27 & 14 & 14 & 14 & KKT & 36 & 40 & 19 & 19 & 20 \\
s365mod & KKT & 32 & 48 & 27 & 27 & 32 & KKT & 23 & 29 & 18 & 18 & 18 \\
simpllpa & KKT & 2 & 2 & 2 & 2 & 1 & KKT & 2 & 2 & 2 & 2 & 1 \\
simpllpb & KKT & 2 & 2 & 2 & 2 & 1 & KKT & 2 & 2 & 2 & 2 & 1 \\
snake & KKT & 3 & 3 & 3 & 3 & 2 & KKT & 3 & 3 & 3 & 3 & 2 \\
spanhyd & KKT & 4 & 4 & 4 & 4 & 3 & KKT & 4 & 4 & 4 & 4 & 3 \\
spiral & KKT & 26 & 26 & 21 & 21 & 20 & KKT & 113 & 117 & 88 & 88 & 90 \\
ssnlbeam & KKT & 5 & 5 & 5 & 5 & 4 & KKT & 5 & 5 & 5 & 5 & 4 \\
stancmin & KKT & 2 & 2 & 2 & 2 & 1 & KKT & 2 & 2 & 2 & 2 & 1 \\
supersim & KKT & 2 & 2 & 2 & 2 & 1 & KKT & 2 & 2 & 2 & 2 & 1 \\
swopf & KKT & 5 & 5 & 5 & 5 & 4 & KKT & 5 & 5 & 5 & 5 & 4 \\
synthes1 & KKT & 5 & 5 & 5 & 5 & 4 & KKT & 5 & 5 & 5 & 5 & 4 \\
tame & KKT & 2 & 2 & 2 & 2 & 1 & KKT & 2 & 2 & 2 & 2 & 1 \\
try-b & KKT & 8 & 8 & 8 & 8 & 7 & KKT & 8 & 8 & 8 & 8 & 7 \\
twobars & KKT & 8 & 8 & 8 & 8 & 7 & KKT & 8 & 8 & 8 & 8 & 7 \\
vanderm4 & infeasible & 1 & 17 & 17 & 17 & 17 & infeasible & 1 & 17 & 17 & 17 & 17 \\
womflet & KKT & 33 & 41 & 27 & 27 & 27 & KKT & 42 & 46 & 24 & 24 & 25 \\
zangwil3 & KKT & 2 & 2 & 2 & 2 & 1 & KKT & 2 & 2 & 2 & 2 & 1 \\
zecevic2 & KKT & 2 & 2 & 2 & 2 & 1 & KKT & 2 & 2 & 2 & 2 & 1 \\
zecevic3 & KKT & 10 & 15 & 10 & 10 & 10 & KKT & 10 & 15 & 10 & 10 & 10 \\
zecevic4 & KKT & 6 & 6 & 6 & 6 & 5 & KKT & 6 & 6 & 6 & 6 & 5 \\
zigzag & KKT & 11 & 11 & 11 & 11 & 10 & KKT & 11 & 11 & 11 & 11 & 10 \\
zy2 & KKT & 5 & 5 & 5 & 5 & 4 & KKT & 5 & 5 & 5 & 5 & 4 \\
\hline
\end{longtable}

\section{Detailed Results on \texttt{CUTEst} problems: Number of Function and Derivative Evaluations for Line-Search Algorithms}\label{App:LS}
\footnotesize
\renewcommand{\arraystretch}{1}
\setlength{\tabcolsep}{4pt}
\begin{longtable}{|c||c|ccccc||c|ccccc|}
\hline
 & \multicolumn{6}{c|}{line-search funnel method}           & \multicolumn{6}{c|}{line-search filter method} \\
\hline
instance & status & $f$ & $c$ & $\nabla f$ & $\nabla c$ & $\nabla^2 \mathcal{L}$ & status & $f$ & $c$ & $\nabla f$ & $\nabla c$ & $\nabla^2 \mathcal{L}$ \\
\hline    
aircrfta & KKT  & 4 & 4 & 4 & 4 & 3 & KKT  & 4 & 4 & 4 & 4 & 3 \\
airport & KKT  & 13 & 13 & 13 & 13 & 12 & KKT  & 13 & 13 & 13 & 13 & 12 \\
aljazzaf & KKT  & 58 & 58 & 34 & 34 & 33 & KKT  & 131 & 171 & 40 & 40 & 40 \\
allinitc & KKT  & 29 & 29 & 29 & 29 & 28 & KKT  & 29 & 29 & 29 & 29 & 28 \\
alsotame & KKT  & 5 & 5 & 5 & 5 & 4 & KKT  & 5 & 5 & 5 & 5 & 4 \\
argauss & infeasible & 1 & 3 & 3 & 3 & 3 & infeasible & 1 & 3 & 3 & 3 & 3 \\
avgasa & KKT  & 3 & 3 & 3 & 3 & 2 & KKT  & 3 & 3 & 3 & 3 & 2 \\
avgasb & KKT  & 3 & 3 & 3 & 3 & 2 & KKT  & 3 & 3 & 3 & 3 & 2 \\
avion2 & KKT  & 3 & 3 & 3 & 3 & 2 & KKT  & 3 & 3 & 3 & 3 & 2 \\
batch & KKT  & 9 & 9 & 9 & 9 & 8 & KKT  & 9 & 9 & 9 & 9 & 8 \\
biggsc4 & KKT  & 4 & 4 & 4 & 4 & 3 & KKT  & 4 & 4 & 4 & 4 & 3 \\
booth & KKT  & 2 & 2 & 2 & 2 & 1 & KKT  & 2 & 2 & 2 & 2 & 1 \\
bt1 & infeasible & 2 & 2 & 2 & 2 & 2 & infeasible & 2 & 2 & 2 & 2 & 2 \\
bt10 & KKT  & 7 & 7 & 7 & 7 & 6 & KKT  & 7 & 7 & 7 & 7 & 6 \\
bt11 & KKT  & 7 & 7 & 7 & 7 & 6 & KKT  & 7 & 7 & 7 & 7 & 6 \\
bt12 & KKT  & 5 & 5 & 5 & 5 & 4 & KKT  & 5 & 5 & 5 & 5 & 4 \\
bt13 & KKT  & 24 & 24 & 21 & 21 & 20 & KKT  & 24 & 24 & 21 & 21 & 20 \\
bt2 & KKT  & 13 & 13 & 13 & 13 & 12 & KKT  & 13 & 13 & 13 & 13 & 12 \\
bt3 & KKT  & 3 & 3 & 3 & 3 & 2 & KKT  & 3 & 3 & 3 & 3 & 2 \\
bt4 & KKT  & 50 & 50 & 50 & 50 & 49 & KKT  & 50 & 50 & 50 & 50 & 49 \\
bt5 & KKT  & 37 & 37 & 37 & 37 & 36 & KKT  & 37 & 37 & 37 & 37 & 36 \\
bt6 & KKT  & 30 & 30 & 30 & 30 & 29 & KKT  & 30 & 30 & 30 & 30 & 29 \\
bt7 & KKT  & 87 & 87 & 53 & 53 & 52 & KKT  & 109 & 109 & 52 & 52 & 52 \\
bt8 & KKT  & 28 & 28 & 28 & 28 & 27 & KKT  & 28 & 28 & 28 & 28 & 27 \\
bt9 & KKT  & 63 & 63 & 22 & 22 & 21 & KKT  & 52 & 52 & 20 & 20 & 19 \\
byrdsphr & error & -- & -- & -- & -- & -- & KKT  & 5495 & 5589 & 453 & 453 & 498 \\
cantilvr & KKT  & 21 & 21 & 14 & 14 & 13 & KKT  & 24 & 24 & 14 & 14 & 13 \\
catena & KKT  & 340 & 340 & 31 & 31 & 32 & KKT  & 244 & 244 & 25 & 25 & 25 \\
cb2 & KKT  & 7 & 7 & 7 & 7 & 6 & KKT  & 7 & 7 & 7 & 7 & 6 \\
cb3 & KKT  & 7 & 7 & 7 & 7 & 6 & KKT  & 7 & 7 & 7 & 7 & 6 \\
chaconn1 & KKT  & 5 & 5 & 5 & 5 & 4 & KKT  & 5 & 5 & 5 & 5 & 4 \\
chaconn2 & KKT  & 5 & 5 & 5 & 5 & 4 & KKT  & 5 & 5 & 5 & 5 & 4 \\
cluster & KKT  & 9 & 9 & 9 & 9 & 8 & KKT  & 9 & 9 & 9 & 9 & 8 \\
concon & KKT  & 5 & 5 & 5 & 5 & 4 & KKT  & 5 & 5 & 5 & 5 & 4 \\
congigmz & KKT  & 5 & 5 & 5 & 5 & 4 & KKT  & 5 & 5 & 5 & 5 & 4 \\
coolhans & KKT  & 6 & 6 & 6 & 6 & 5 & KKT  & 57 & 65 & 11 & 11 & 11 \\
core1 & KKT  & 254 & 254 & 254 & 254 & 253 & KKT  & 254 & 254 & 254 & 254 & 253 \\
coshfun & error & -- & -- & -- & -- & -- & error & -- & -- & -- & -- & -- \\
cresc4 & error & -- & -- & -- & -- & -- & error & -- & -- & -- & -- & -- \\
csfi1 & KKT  & 14 & 14 & 11 & 11 & 10 & KKT & 47 & 47 & 27 & 27 & 26 \\
csfi2 & KKT  & 72 & 72 & 31 & 31 & 30 & KKT & 88 & 88 & 31 & 31 & 30 \\
dallass & KKT  & 17 & 17 & 14 & 14 & 13 & KKT  & 17 & 17 & 14 & 14 & 13 \\
deconvc & KKT  & 82 & 82 & 82 & 82 & 81 & KKT  & 82 & 82 & 82 & 82 & 81 \\
degenlpa & KKT  & 2 & 2 & 2 & 2 & 1 & KKT  & 2 & 2 & 2 & 2 & 1 \\
degenlpb & KKT  & 2 & 2 & 2 & 2 & 1 & KKT  & 2 & 2 & 2 & 2 & 1 \\
demymalo & KKT  & 17 & 17 & 8 & 8 & 7 & KKT  & 17 & 17 & 8 & 8 & 7 \\
dipigri & KKT  & 17 & 17 & 8 & 8 & 7 & KKT  & 17 & 17 & 8 & 8 & 7 \\
disc2 & infeasible & 2 & 198 & 198 & 198 & 198 & infeasible & 2 & 198 & 198 & 198 & 198 \\
discs & KKT  & 2758 & 2767 & 2767 & 2767 & 2767 & KKT  & 2758 & 2767 & 2767 & 2767 & 2767 \\
dixchlng & KKT  & 224 & 224 & 208 & 208 & 207 & error & -- & -- & -- & -- & -- \\
dnieper & KKT  & 577 & 577 & 577 & 577 & 576 & KKT  & 577 & 577 & 577 & 577 & 576 \\
dual1 & KKT  & 3 & 3 & 3 & 3 & 2 & KKT  & 3 & 3 & 3 & 3 & 2 \\
dual2 & KKT  & 3 & 3 & 3 & 3 & 2 & KKT  & 3 & 3 & 3 & 3 & 2 \\
dual4 & KKT  & 3 & 3 & 3 & 3 & 2 & KKT  & 3 & 3 & 3 & 3 & 2 \\
dualc1 & KKT  & 2 & 2 & 2 & 2 & 1 & KKT  & 2 & 2 & 2 & 2 & 1 \\
dualc2 & KKT  & 2 & 2 & 2 & 2 & 1 & KKT  & 2 & 2 & 2 & 2 & 1 \\
dualc5 & error & -- & -- & -- & -- & -- & error & -- & -- & -- & -- & -- \\
dualc8 & KKT  & 2 & 2 & 2 & 2 & 1 & KKT  & 2 & 2 & 2 & 2 & 1 \\
eigencco & KKT  & 13 & 13 & 13 & 13 & 12 & KKT  & 13 & 13 & 13 & 13 & 12 \\
eigmaxc & KKT  & 6 & 7 & 7 & 7 & 7 & KKT  & 6 & 7 & 7 & 7 & 7 \\
eigminc & KKT  & 6 & 7 & 7 & 7 & 7 & KKT  & 6 & 7 & 7 & 7 & 7 \\
expfita & KKT  & 14 & 14 & 14 & 14 & 13 & KKT  & 14 & 14 & 14 & 14 & 13 \\
extrasim & KKT  & 3 & 3 & 3 & 3 & 2 & KKT  & 3 & 3 & 3 & 3 & 2 \\
fccu & KKT  & 3 & 3 & 3 & 3 & 2 & KKT  & 3 & 3 & 3 & 3 & 2 \\
fletcher & infeasible & 2 & 2 & 2 & 2 & 2 & infeasible & 2 & 2 & 2 & 2 & 2 \\
genhs28 & KKT  & 3 & 3 & 3 & 3 & 2 & KKT  & 3 & 3 & 3 & 3 & 2 \\
gigomez1 & KKT  & 17 & 17 & 8 & 8 & 7 & KKT  & 17 & 17 & 8 & 8 & 7 \\
goffin & KKT  & 3 & 3 & 3 & 3 & 2 & KKT  & 3 & 3 & 3 & 3 & 2 \\
gottfr & KKT  & 8 & 8 & 8 & 8 & 7 & KKT  & 8 & 8 & 6 & 6 & 5 \\
gridnetg & KKT  & 4 & 4 & 4 & 4 & 3 & KKT  & 4 & 4 & 4 & 4 & 3 \\
gridneth & KKT  & 5 & 5 & 5 & 5 & 4 & KKT  & 5 & 5 & 5 & 5 & 4 \\
gridneti & KKT  & 5 & 5 & 5 & 5 & 4 & KKT  & 5 & 5 & 5 & 5 & 4 \\
haifas & KKT  & 31 & 31 & 9 & 9 & 8 & KKT  & 31 & 31 & 9 & 9 & 8 \\
haldmads & KKT  & 143 & 143 & 136 & 136 & 135 & error & -- & -- & -- & -- & -- \\
hatfldf & KKT  & 9 & 9 & 8 & 8 & 7 & infeasible & 47 & 3267 & 1524 & 1524 & 1524 \\
hatfldg & KKT  & 17 & 17 & 9 & 9 & 8 & KKT  & 21 & 21 & 9 & 9 & 8 \\
hatfldh & KKT  & 4 & 4 & 4 & 4 & 3 & KKT  & 4 & 4 & 4 & 4 & 3 \\
heart6 & KKT  & 3 & 2846 & 2843 & 2843 & 2843 & KKT  & 3 & 2846 & 2843 & 2843 & 2843 \\
heart8 & KKT  & 5 & 26 & 19 & 19 & 19 & KKT  & 5 & 26 & 19 & 19 & 19 \\
himmelba & KKT  & 2 & 2 & 2 & 2 & 1 & KKT  & 2 & 2 & 2 & 2 & 1 \\
himmelbc & KKT  & 6 & 6 & 6 & 6 & 5 & KKT  & 7 & 7 & 6 & 6 & 5 \\
himmelbd & infeasible & 134 & 181 & 37 & 37 & 37 & infeasible & 57 & 109 & 21 & 21 & 21 \\
himmelbe & KKT  & 3 & 3 & 3 & 3 & 2 & KKT  & 3 & 3 & 3 & 3 & 2 \\
himmelbk & KKT  & 708 & 708 & 708 & 708 & 707 & KKT  & 708 & 708 & 708 & 708 & 707 \\
himmelp2 & KKT  & 10 & 10 & 10 & 10 & 9 & KKT  & 10 & 10 & 10 & 10 & 9 \\
himmelp3 & KKT  & 5 & 5 & 5 & 5 & 4 & KKT  & 5 & 5 & 5 & 5 & 4 \\
himmelp4 & KKT  & 6 & 6 & 6 & 6 & 5 & KKT  & 6 & 6 & 6 & 6 & 5 \\
himmelp5 & KKT  & 16 & 16 & 16 & 16 & 15 & KKT  & 16 & 16 & 16 & 16 & 15 \\
himmelp6 & KKT  & 2 & 2 & 2 & 2 & 1 & KKT  & 2 & 2 & 2 & 2 & 1 \\
hong & KKT  & 5 & 5 & 5 & 5 & 4 & KKT  & 5 & 5 & 5 & 5 & 4 \\
hs006 & KKT  & 5 & 5 & 5 & 5 & 4 & KKT  & 5 & 5 & 5 & 5 & 4 \\
hs007 & KKT  & 16 & 16 & 11 & 11 & 10 & KKT  & 16 & 16 & 11 & 11 & 10 \\
hs008 & KKT  & 6 & 6 & 6 & 6 & 5 & KKT  & 6 & 6 & 6 & 6 & 5 \\
hs009 & KKT  & 5 & 5 & 4 & 4 & 3 & KKT  & 5 & 5 & 4 & 4 & 3 \\
hs010 & KKT  & 39 & 39 & 15 & 15 & 14 & KKT  & 52 & 52 & 18 & 18 & 17 \\
hs011 & KKT  & 6 & 6 & 6 & 6 & 5 & KKT  & 6 & 6 & 6 & 6 & 5 \\
hs012 & KKT  & 9 & 9 & 7 & 7 & 6 & KKT  & 9 & 9 & 7 & 7 & 6 \\
hs013 & KKT  & 26 & 26 & 26 & 26 & 25 & KKT  & 26 & 26 & 26 & 26 & 25 \\
hs014 & KKT  & 6 & 6 & 6 & 6 & 5 & KKT  & 6 & 6 & 6 & 6 & 5 \\
hs015 & KKT  & 4 & 9 & 9 & 9 & 9 & KKT  & 4 & 9 & 9 & 9 & 9 \\
hs016 & KKT  & 6 & 6 & 6 & 6 & 5 & KKT  & 6 & 6 & 6 & 6 & 5 \\
hs017 & KKT  & 10 & 10 & 9 & 9 & 8 & KKT  & 10 & 10 & 9 & 9 & 8 \\
hs018 & KKT  & 7 & 7 & 7 & 7 & 6 & KKT  & 7 & 7 & 7 & 7 & 6 \\
hs019 & KKT  & 7 & 7 & 7 & 7 & 6 & KKT  & 7 & 7 & 7 & 7 & 6 \\
hs020 & KKT  & 5 & 5 & 5 & 5 & 4 & KKT  & 5 & 5 & 5 & 5 & 4 \\
hs021 & KKT  & 3 & 3 & 3 & 3 & 2 & KKT  & 3 & 3 & 3 & 3 & 2 \\
hs022 & KKT  & 3 & 3 & 3 & 3 & 2 & KKT  & 3 & 3 & 3 & 3 & 2 \\
hs023 & KKT  & 7 & 7 & 7 & 7 & 6 & KKT  & 7 & 7 & 7 & 7 & 6 \\
hs024 & KKT  & 16 & 16 & 16 & 16 & 15 & KKT  & 16 & 16 & 16 & 16 & 15 \\
hs026 & KKT  & 19 & 19 & 19 & 19 & 18 & KKT  & 19 & 19 & 19 & 19 & 18 \\
hs027 & KKT  & 14 & 14 & 14 & 14 & 13 & KKT  & 38 & 38 & 16 & 16 & 15 \\
hs028 & KKT  & 3 & 3 & 3 & 3 & 2 & KKT  & 3 & 3 & 3 & 3 & 2 \\
hs029 & KKT  & 25 & 25 & 25 & 25 & 24 & KKT  & 25 & 25 & 25 & 25 & 24 \\
hs030 & KKT  & 3 & 3 & 3 & 3 & 2 & KKT  & 3 & 3 & 3 & 3 & 2 \\
hs031 & KKT  & 7 & 7 & 7 & 7 & 6 & KKT  & 7 & 7 & 7 & 7 & 6 \\
hs032 & KKT  & 3 & 3 & 3 & 3 & 2 & KKT  & 3 & 3 & 3 & 3 & 2 \\
hs033 & KKT  & 16 & 16 & 16 & 16 & 15 & KKT  & 16 & 16 & 16 & 16 & 15 \\
hs034 & KKT  & 8 & 8 & 8 & 8 & 7 & KKT  & 8 & 8 & 8 & 8 & 7 \\
hs035 & KKT  & 3 & 3 & 3 & 3 & 2 & KKT  & 3 & 3 & 3 & 3 & 2 \\
hs036 & KKT  & 7 & 7 & 7 & 7 & 6 & KKT  & 7 & 7 & 7 & 7 & 6 \\
hs037 & KKT  & 138 & 138 & 132 & 132 & 131 & KKT  & 138 & 138 & 132 & 132 & 131 \\
hs039 & KKT  & 63 & 63 & 22 & 22 & 21 & KKT  & 52 & 52 & 20 & 20 & 19 \\
hs040 & KKT  & 19 & 19 & 19 & 19 & 18 & KKT  & 19 & 19 & 19 & 19 & 18 \\
hs041 & KKT  & 2 & 2 & 2 & 2 & 1 & KKT  & 2 & 2 & 2 & 2 & 1 \\
hs042 & KKT  & 6 & 6 & 6 & 6 & 5 & KKT  & 6 & 6 & 6 & 6 & 5 \\
hs043 & KKT  & 8 & 8 & 7 & 7 & 6 & KKT  & 8 & 8 & 7 & 7 & 6 \\
hs044 & KKT  & 10 & 10 & 10 & 10 & 9 & KKT  & 10 & 10 & 10 & 10 & 9 \\
hs046 & KKT  & 18 & 18 & 18 & 18 & 17 & KKT  & 18 & 18 & 18 & 18 & 17 \\
hs047 & KKT  & 18 & 18 & 18 & 18 & 17 & KKT  & 18 & 18 & 18 & 18 & 17 \\
hs048 & KKT  & 3 & 3 & 3 & 3 & 2 & KKT  & 3 & 3 & 3 & 3 & 2 \\
hs049 & KKT  & 18 & 18 & 18 & 18 & 17 & KKT  & 18 & 18 & 18 & 18 & 17 \\
hs050 & KKT  & 10 & 10 & 10 & 10 & 9 & KKT  & 10 & 10 & 10 & 10 & 9 \\
hs051 & KKT  & 3 & 3 & 3 & 3 & 2 & KKT  & 3 & 3 & 3 & 3 & 2 \\
hs052 & KKT  & 3 & 3 & 3 & 3 & 2 & KKT  & 3 & 3 & 3 & 3 & 2 \\
hs053 & KKT  & 3 & 3 & 3 & 3 & 2 & KKT  & 3 & 3 & 3 & 3 & 2 \\
hs054 & KKT  & 3 & 3 & 3 & 3 & 2 & KKT  & 3 & 3 & 3 & 3 & 2 \\
hs055 & KKT  & 2 & 2 & 2 & 2 & 1 & KKT  & 2 & 2 & 2 & 2 & 1 \\
hs056 & KKT  & 102 & 102 & 102 & 102 & 101 & KKT  & 102 & 102 & 102 & 102 & 101 \\
hs057 & KKT  & 26 & 26 & 26 & 26 & 25 & KKT  & 26 & 26 & 26 & 26 & 25 \\
hs059 & KKT  & 18 & 18 & 12 & 12 & 11 & KKT  & 18 & 18 & 12 & 12 & 11 \\
hs060 & KKT  & 7 & 7 & 7 & 7 & 6 & KKT  & 7 & 7 & 7 & 7 & 6 \\
hs061 & infeasible & 2 & 3 & 3 & 3 & 3 & infeasible & 2 & 3 & 3 & 3 & 3 \\
hs062 & KKT  & 7 & 7 & 6 & 6 & 5 & KKT  & 7 & 7 & 6 & 6 & 5 \\
hs063 & KKT  & 42 & 44 & 43 & 43 & 43 & KKT  & 42 & 44 & 43 & 43 & 43 \\
hs064 & KKT  & 16 & 16 & 16 & 16 & 15 & KKT  & 16 & 16 & 16 & 16 & 15 \\
hs065 & KKT  & 5 & 5 & 5 & 5 & 4 & KKT  & 5 & 5 & 5 & 5 & 4 \\
hs066 & KKT  & 9 & 9 & 9 & 9 & 8 & KKT  & 9 & 9 & 9 & 9 & 8 \\
hs067 & error & -- & -- & -- & -- & -- & error & -- & -- & -- & -- & -- \\
hs070 & KKT  & 25 & 25 & 19 & 19 & 18 & KKT  & 25 & 25 & 19 & 19 & 18 \\
hs071 & KKT  & 47 & 47 & 47 & 47 & 46 & KKT  & 47 & 47 & 47 & 47 & 46 \\
hs072 & KKT  & 17 & 17 & 15 & 15 & 14 & KKT  & 18 & 18 & 15 & 15 & 14 \\
hs073 & KKT  & 4 & 4 & 4 & 4 & 3 & KKT  & 4 & 4 & 4 & 4 & 3 \\
hs074 & KKT  & 6 & 6 & 6 & 6 & 5 & KKT  & 6 & 6 & 6 & 6 & 5 \\
hs075 & KKT  & 5 & 5 & 5 & 5 & 4 & KKT  & 5 & 5 & 5 & 5 & 4 \\
hs076 & KKT  & 3 & 3 & 3 & 3 & 2 & KKT  & 3 & 3 & 3 & 3 & 2 \\
hs077 & KKT  & 12 & 12 & 11 & 11 & 10 & KKT  & 12 & 12 & 11 & 11 & 10 \\
hs078 & KKT  & 35 & 35 & 35 & 35 & 34 & KKT  & 35 & 35 & 35 & 35 & 34 \\
hs079 & KKT  & 5 & 5 & 5 & 5 & 4 & KKT  & 5 & 5 & 5 & 5 & 4 \\
hs080 & KKT  & 8 & 8 & 8 & 8 & 7 & KKT  & 8 & 8 & 8 & 8 & 7 \\
hs081 & error & -- & -- & -- & -- & -- & error & -- & -- & -- & -- & -- \\
hs083 & KKT  & 5 & 5 & 5 & 5 & 4 & KKT  & 5 & 5 & 5 & 5 & 4 \\
hs084 & KKT  & 2859 & 2859 & 2859 & 2859 & 2858 & KKT  & 2859 & 2859 & 2859 & 2859 & 2858 \\
hs085 & infeasible & 1 & 34 & 23 & 23 & 23 & infeasible & 1 & 34 & 23 & 23 & 23 \\
hs086 & KKT  & 5 & 5 & 5 & 5 & 4 & KKT  & 5 & 5 & 5 & 5 & 4 \\
hs087 & KKT  & 20 & 20 & 20 & 20 & 19 & KKT  & 20 & 20 & 20 & 20 & 19 \\
hs088 & infeasible & 6 & 6 & 6 & 6 & 6 & KKT  & 19 & 19 & 17 & 17 & 16 \\
hs089 & KKT  & 24 & 24 & 19 & 19 & 18 & KKT  & 24 & 24 & 19 & 19 & 18 \\
hs090 & infeasible & 4 & 4 & 4 & 4 & 4 & KKT  & 18 & 18 & 12 & 12 & 11 \\
hs091 & KKT  & 32 & 32 & 29 & 29 & 28 & KKT  & 39 & 39 & 25 & 25 & 24 \\
hs092 & infeasible & 4 & 4 & 4 & 4 & 4 & KKT  & 31 & 31 & 15 & 15 & 14 \\
hs093 & KKT  & 1558 & 1558 & 1558 & 1558 & 1557 & KKT  & 1558 & 1558 & 1558 & 1558 & 1557 \\
hs095 & KKT  & 3 & 3 & 3 & 3 & 2 & KKT  & 3 & 3 & 3 & 3 & 2 \\
hs096 & KKT  & 3 & 3 & 3 & 3 & 2 & KKT  & 3 & 3 & 3 & 3 & 2 \\
hs097 & KKT  & 63 & 63 & 63 & 63 & 62 & KKT  & 63 & 63 & 63 & 63 & 62 \\
hs098 & KKT  & 63 & 63 & 63 & 63 & 62 & KKT  & 63 & 63 & 63 & 63 & 62 \\
hs099 & KKT  & 7 & 7 & 7 & 7 & 6 & KKT  & 7 & 7 & 7 & 7 & 6 \\
hs100 & KKT  & 17 & 17 & 8 & 8 & 7 & KKT  & 17 & 17 & 8 & 8 & 7 \\
hs100lnp & KKT  & 27 & 27 & 21 & 21 & 20 & KKT  & 24 & 24 & 17 & 17 & 16 \\
hs100mod & KKT  & 25 & 25 & 10 & 10 & 9 & KKT  & 25 & 25 & 10 & 10 & 9 \\
hs101 & KKT  & 24 & 24 & 18 & 18 & 17 & KKT  & 24 & 24 & 18 & 18 & 17 \\
hs102 & KKT  & 21 & 21 & 19 & 19 & 18 & KKT  & 87 & 96 & 26 & 26 & 26 \\
hs103 & KKT  & 14 & 14 & 12 & 12 & 11 & KKT  & 35 & 35 & 27 & 27 & 26 \\
hs104 & KKT  & 41 & 41 & 41 & 41 & 40 & KKT  & 126 & 130 & 51 & 51 & 51 \\
hs106 & KKT  & 748 & 748 & 744 & 744 & 743 & KKT  & 724 & 724 & 721 & 721 & 720 \\
hs107 & KKT  & 11 & 11 & 11 & 11 & 10 & KKT  & 11 & 11 & 11 & 11 & 10 \\
hs108 & KKT  & 11 & 11 & 11 & 11 & 10 & KKT  & 11 & 11 & 11 & 11 & 10 \\
hs109 & KKT  & 45 & 46 & 46 & 46 & 46 & KKT  & 45 & 46 & 46 & 46 & 46 \\
hs111 & KKT  & 17 & 17 & 17 & 17 & 16 & KKT  & 17 & 17 & 17 & 17 & 16 \\
hs111lnp & KKT  & 17 & 17 & 17 & 17 & 16 & KKT  & 17 & 17 & 17 & 17 & 16 \\
hs112 & KKT  & 12 & 12 & 12 & 12 & 11 & KKT  & 12 & 12 & 12 & 12 & 11 \\
hs113 & KKT  & 6 & 6 & 6 & 6 & 5 & KKT  & 6 & 6 & 6 & 6 & 5 \\
hs114 & error & -- & -- & -- & -- & -- & error & -- & -- & -- & -- & -- \\
hs116 & error & -- & -- & -- & -- & -- & error & -- & -- & -- & -- & -- \\
hs117 & KKT  & 8 & 8 & 7 & 7 & 6 & KKT  & 8 & 8 & 7 & 7 & 6 \\
hs118 & KKT  & 3 & 3 & 3 & 3 & 2 & KKT  & 3 & 3 & 3 & 3 & 2 \\
hs119 & KKT  & 7 & 7 & 7 & 7 & 6 & KKT  & 7 & 7 & 7 & 7 & 6 \\
hs21mod & KKT  & 3 & 3 & 3 & 3 & 2 & KKT  & 3 & 3 & 3 & 3 & 2 \\
hs268 & KKT  & 4 & 4 & 4 & 4 & 3 & KKT  & 4 & 4 & 4 & 4 & 3 \\
hs35mod & KKT  & 3 & 3 & 3 & 3 & 2 & KKT  & 3 & 3 & 3 & 3 & 2 \\
hs44new & KKT  & 7 & 7 & 7 & 7 & 6 & KKT  & 7 & 7 & 7 & 7 & 6 \\
hs99exp & error & -- & -- & -- & -- & -- & error & -- & -- & -- & -- & -- \\
hubfit & KKT  & 3 & 3 & 3 & 3 & 2 & KKT  & 3 & 3 & 3 & 3 & 2 \\
hypcir & KKT  & 6 & 6 & 6 & 6 & 5 & KKT  & 6 & 6 & 5 & 5 & 4 \\
kiwcresc & KKT  & 215 & 215 & 23 & 23 & 24 & KKT  & 152 & 154 & 22 & 22 & 22 \\
lakes & error & -- & -- & -- & -- & -- & error & -- & -- & -- & -- & -- \\
launch & error & -- & -- & -- & -- & -- & error & -- & -- & -- & -- & -- \\
lewispol & infeasible & 1 & 4 & 4 & 4 & 4 & infeasible & 1 & 4 & 4 & 4 & 4 \\
linspanh & KKT  & 2 & 2 & 2 & 2 & 1 & KKT  & 2 & 2 & 2 & 2 & 1 \\
loadbal & KKT  & 14 & 14 & 14 & 14 & 13 & KKT  & 14 & 14 & 14 & 14 & 13 \\
lootsma & KKT  & 16 & 16 & 16 & 16 & 15 & KKT  & 16 & 16 & 16 & 16 & 15 \\
lotschd & KKT  & 3 & 3 & 3 & 3 & 2 & KKT  & 3 & 3 & 3 & 3 & 2 \\
lsnnodoc & KKT  & 7 & 7 & 7 & 7 & 6 & KKT  & 7 & 7 & 7 & 7 & 6 \\
lsqfit & KKT  & 3 & 3 & 3 & 3 & 2 & KKT  & 3 & 3 & 3 & 3 & 2 \\
madsen & KKT  & 10 & 10 & 10 & 10 & 9 & KKT  & 10 & 10 & 10 & 10 & 9 \\
makela1 & KKT  & 18 & 18 & 9 & 9 & 8 & KKT  & 18 & 18 & 9 & 9 & 8 \\
makela2 & KKT  & 14 & 14 & 6 & 6 & 5 & KKT  & 14 & 14 & 6 & 6 & 5 \\
makela3 & KKT  & 83 & 83 & 28 & 28 & 27 & KKT  & 71 & 71 & 23 & 23 & 22 \\
makela4 & KKT  & 3 & 3 & 3 & 3 & 2 & KKT  & 3 & 3 & 3 & 3 & 2 \\
maratos & KKT  & 37 & 37 & 15 & 15 & 14 & KKT  & 32 & 32 & 14 & 14 & 13 \\
matrix2 & KKT  & 22 & 22 & 22 & 22 & 21 & KKT  & 22 & 22 & 22 & 22 & 21 \\
mconcon & KKT  & 5 & 5 & 5 & 5 & 4 & KKT  & 5 & 5 & 5 & 5 & 4 \\
mifflin1 & KKT  & 48 & 48 & 15 & 15 & 14 & KKT  & 39 & 39 & 13 & 13 & 12 \\
mifflin2 & KKT  & 56 & 56 & 14 & 14 & 13 & KKT  & 60 & 60 & 14 & 14 & 13 \\
minmaxbd & KKT  & 59 & 59 & 15 & 15 & 14 & KKT  & 74 & 74 & 19 & 19 & 18 \\
minmaxrb & KKT  & 4 & 4 & 4 & 4 & 3 & KKT  & 4 & 4 & 4 & 4 & 3 \\
mistake & KKT  & 7 & 7 & 7 & 7 & 6 & KKT  & 7 & 7 & 7 & 7 & 6 \\
model & KKT  & 3 & 3 & 3 & 3 & 2 & KKT  & 3 & 3 & 3 & 3 & 2 \\
mwright & KKT  & 23 & 23 & 22 & 22 & 21 & KKT  & 23 & 23 & 22 & 22 & 21 \\
nuffield & KKT  & 5 & 5 & 5 & 5 & 4 & KKT  & 5 & 5 & 5 & 5 & 4 \\
odfits & KKT  & 7 & 7 & 7 & 7 & 6 & KKT  & 7 & 7 & 7 & 7 & 6 \\
optcntrl & KKT  & 4 & 4 & 4 & 4 & 3 & KKT  & 4 & 4 & 4 & 4 & 3 \\
optmass & KKT  & 808 & 808 & 808 & 808 & 807 & KKT  & 808 & 808 & 808 & 808 & 807 \\
optprloc & KKT  & 8 & 8 & 6 & 6 & 5 & KKT  & 8 & 8 & 6 & 6 & 5 \\
orthregb & KKT  & 3 & 3 & 3 & 3 & 2 & KKT  & 3 & 3 & 3 & 3 & 2 \\
orthrege & error & -- & -- & -- & -- & -- & error & -- & -- & -- & -- & -- \\
pentagon & KKT  & 9 & 9 & 9 & 9 & 8 & KKT  & 9 & 9 & 9 & 9 & 8 \\
polak1 & KKT  & 93 & 93 & 15 & 15 & 15 & KKT  & 69 & 69 & 17 & 17 & 16 \\
polak2 & KKT  & 593 & 641 & 454 & 454 & 455 & KKT  & 553 & 587 & 451 & 451 & 451 \\
polak3 & KKT  & 53 & 53 & 15 & 15 & 14 & KKT  & 87 & 96 & 18 & 18 & 18 \\
polak4 & KKT  & 5 & 5 & 5 & 5 & 4 & KKT  & 5 & 5 & 5 & 5 & 4 \\
polak5 & KKT  & 30 & 30 & 25 & 25 & 24 & KKT  & 66 & 66 & 42 & 42 & 41 \\
polak6 & KKT  & 73 & 73 & 16 & 16 & 15 & KKT  & 67 & 67 & 15 & 15 & 14 \\
portfl1 & KKT  & 3 & 3 & 3 & 3 & 2 & KKT  & 3 & 3 & 3 & 3 & 2 \\
portfl2 & KKT  & 3 & 3 & 3 & 3 & 2 & KKT  & 3 & 3 & 3 & 3 & 2 \\
portfl3 & KKT  & 3 & 3 & 3 & 3 & 2 & KKT  & 3 & 3 & 3 & 3 & 2 \\
portfl4 & KKT  & 3 & 3 & 3 & 3 & 2 & KKT  & 3 & 3 & 3 & 3 & 2 \\
portfl6 & KKT  & 3 & 3 & 3 & 3 & 2 & KKT  & 3 & 3 & 3 & 3 & 2 \\
powellbs & KKT  & 15 & 15 & 15 & 15 & 14 & KKT  & 223 & 223 & 57 & 57 & 56 \\
powellsq & KKT  & 41 & 41 & 34 & 34 & 33 & error & -- & -- & -- & -- & -- \\
prodpl0 & KKT  & 9 & 9 & 9 & 9 & 8 & KKT  & 9 & 9 & 9 & 9 & 8 \\
prodpl1 & KKT  & 8 & 8 & 8 & 8 & 7 & KKT  & 8 & 8 & 8 & 8 & 7 \\
recipe & KKT  & 3 & 3 & 3 & 3 & 2 & KKT  & 3 & 3 & 3 & 3 & 2 \\
res & KKT  & 2 & 2 & 2 & 2 & 1 & error & -- & -- & -- & -- & -- \\
rk23 & KKT  & 10 & 10 & 10 & 10 & 9 & KKT  & 13 & 13 & 12 & 12 & 11 \\
robot & KKT  & 19 & 23 & 20 & 20 & 20 & KKT  & 21 & 25 & 21 & 21 & 21 \\
rosenmmx & KKT  & 75 & 75 & 14 & 14 & 13 & KKT  & 79 & 79 & 15 & 15 & 14 \\
s365mod & KKT  & 1333 & 1333 & 1333 & 1333 & 1332 & KKT  & 1339 & 1339 & 1337 & 1337 & 1336 \\
simpllpa & KKT  & 3 & 3 & 3 & 3 & 2 & KKT  & 3 & 3 & 3 & 3 & 2 \\
simpllpb & KKT  & 3 & 3 & 3 & 3 & 2 & KKT  & 3 & 3 & 3 & 3 & 2 \\
snake & KKT  & 3 & 3 & 3 & 3 & 2 & KKT  & 3 & 3 & 3 & 3 & 2 \\
spanhyd & KKT  & 6 & 6 & 6 & 6 & 5 & KKT  & 6 & 6 & 6 & 6 & 5 \\
spiral & KKT  & 215 & 215 & 107 & 107 & 106 & KKT  & 99 & 99 & 72 & 72 & 71 \\
ssnlbeam & error & -- & -- & -- & -- & -- & error & -- & -- & -- & -- & -- \\
stancmin & KKT  & 2 & 2 & 2 & 2 & 1 & KKT  & 2 & 2 & 2 & 2 & 1 \\
supersim & KKT  & 2 & 2 & 2 & 2 & 1 & KKT  & 2 & 2 & 2 & 2 & 1 \\
swopf & error & -- & -- & -- & -- & -- & error & -- & -- & -- & -- & -- \\
synthes1 & KKT  & 5 & 5 & 5 & 5 & 4 & KKT  & 5 & 5 & 5 & 5 & 4 \\
tame & KKT  & 2 & 2 & 2 & 2 & 1 & KKT  & 2 & 2 & 2 & 2 & 1 \\
try-b & KKT  & 9 & 9 & 9 & 9 & 8 & KKT  & 9 & 9 & 9 & 9 & 8 \\
twobars & KKT  & 8 & 8 & 8 & 8 & 7 & KKT  & 8 & 8 & 8 & 8 & 7 \\
vanderm4 & infeasible & 1 & 22 & 22 & 22 & 22 & infeasible & 1 & 22 & 22 & 22 & 22 \\
womflet & error & -- & -- & -- & -- & -- & error & -- & -- & -- & -- & -- \\
zangwil3 & KKT  & 2 & 2 & 2 & 2 & 1 & KKT  & 2 & 2 & 2 & 2 & 1 \\
zecevic2 & KKT  & 3 & 3 & 3 & 3 & 2 & KKT  & 3 & 3 & 3 & 3 & 2 \\
zecevic3 & KKT  & 9 & 9 & 8 & 8 & 7 & KKT  & 9 & 9 & 8 & 8 & 7 \\
zecevic4 & KKT  & 7 & 7 & 7 & 7 & 6 & KKT  & 7 & 7 & 7 & 7 & 6 \\
zigzag & KKT  & 131 & 131 & 131 & 131 & 130 & KKT  & 131 & 131 & 131 & 131 & 130 \\
zy2 & KKT  & 16 & 16 & 16 & 16 & 15 & KKT  & 16 & 16 & 16 & 16 & 15 \\
\hline
\end{longtable}
 
\end{document}